\newif\ifxetexorluatex
\newtheorem{proposition}{Proposition}
\newtheorem{lemma}{Lemma}
\newtheorem{theorem}{Theorem}
\newtheorem{corollary}{Corollary}
\theoremstyle{definition}
\newtheorem{example}{Example}
\numberwithin{equation}{section}
\def\VR{\kern-\arraycolsep\strut\vrule &\kern-\arraycolsep}
\def\vr{\kern-\arraycolsep & \kern-\arraycolsep}
\newcommand{\sfT}{\mathsf{T}}
\newcommand{\oo}{\mathfrak{o}}
\newcommand{\piperp}{\pi_{_\perp}}
\newcommand{\fR}{\mathfrak{r}}
\newcommand{\tGamma}{\tilde{\Gamma}}
\newcommand{\ttA}{\mathtt{A}}
\newcommand{\ttH}{\mathtt{H}}
\newcommand{\ttM}{\mathtt{M}}
\newcommand{\ttL}{\mathtt{L}}
\newcommand{\ttX}{\mathtt{X}}
\newcommand{\ttY}{\mathtt{Y}}
\newcommand{\ttZ}{\mathtt{Z}}
\newcommand{\fq}{\mathfrak{q}}
\newcommand{\cE}{\mathcal{E}}
\newcommand{\Two}{\mathbb{II}}
\newcommand{\R}{\mathbb{R}}
\newcommand{\E}{\mathbb{E}}
\newcommand{\sigmam}{\mathring{\sigma}}
\newcommand{\fF}{\mathfrak{F}}
\newcommand{\uV}{\underline{V}}
\newcommand{\oV}{\overline{V}}
\newcommand{\rB}{\mathrm{B}}
\newcommand{\rG}{\mathrm{G}}
\newcommand{\rD}{\mathrm{D}}
\newcommand{\rC}{\mathrm{C}}
\newcommand{\rH}{\mathrm{H}}
\newcommand{\rS}{\mathrm{S}}
\newcommand{\fL}{\mathfrak{L}}
\newcommand{\ehess}{\mathsf{ehess}}
\newcommand{\egrad}{\mathsf{egrad}}
\newcommand{\rgrad}{\mathsf{rgrad}}
\newcommand{\rhess}{\mathsf{rhess}}
\newcommand{\sym}{\mathrm{sym}}
\newcommand{\asym}{\mathrm{skew}}
\newcommand{\cZ}{\mathcal{Z}}
\newcommand{\cV}{\mathcal{V}}
\newcommand{\cW}{\mathcal{W}}
\newcommand{\cB}{\mathcal{B}}
\newcommand{\Herm}[1]{\mathrm{Sym}_{#1}}
\newcommand{\St}[2]{\mathrm{St}_{#1, #2}}
\newcommand{\Sph}{\mathrm{S}}
\newcommand{\Gr}[2]{\mathrm{Gr}_{#1, #2}}
\newcommand{\Sd}[1]{\mathrm{S}^{+}_{#1}}
\newcommand{\cH}{\mathcal{H}}
\newcommand{\cI}{\mathcal{I}}
\newcommand{\bcI}{\bar{\mathcal{I}}}
\newcommand{\pg}{p_{\fG}}
\newcommand{\fGperp}{\fG_{_{\perp}}}
\newcommand{\fG}{\mathfrak{g}}
\newcommand{\cF}{\mathcal{F}}
\newcommand{\cM}{\mathcal{M}}
\newcommand{\cN}{\mathcal{N}}
\newcommand{\cU}{\mathcal{U}}
\newcommand{\sfg}{\mathsf{g}}
\newcommand{\T}{\mathbb{T}}
\DeclareMathOperator{\Lin}{Lin}
\DeclareMathOperator{\ad}{ad}
\DeclareMathOperator{\diag}{diag}
\DeclareMathOperator{\Tr}{Tr}
\DeclareMathOperator{\GL}{GL}
\DeclareMathOperator{\SL}{SL}
\DeclareMathOperator{\Aff}{Aff^+}
\DeclareMathOperator{\SO}{SO}
\DeclareMathOperator{\SE}{SE}
\DeclareMathOperator{\dI}{I}
\begin{document}
\title[SDE on manifolds]{Second-order differential operators, stochastic differential equations and Brownian motions on embedded manifolds}

%% use optional labels to link authors explicitly to addresses:

%\author{Du Nguyen} %% Author name

%% Author affiliation
%\affiliation{}

\author{Du Nguyen}
\email{nguyendu@post.harvard.edu}
\address{Independent, Darien, CT 06820, USA}

\author{ Stefan Sommer}
\email{sommer@di.ku.dk}

\address{University of Copenhagen, Department of Computer Science,
         Universitetsparken 5, Copenhagen 2100,  Denmark}

%% Abstract
\begin{abstract}
%% Text of abstract
{We specify the conditions when a manifold $\cM$ embedded in an inner product space $\cE$ is an invariant manifold of a stochastic differential equation (SDE) on $\cE$, linking it with the notion of second-order differential operators on $\cM$. When $\cM$ is given a Riemannian metric, we derive a simple formula for the Laplace-Beltrami operator in terms of the gradient and Hessian on $\cE$ and construct the Riemannian Brownian motions on $\cM$ as solutions of conservative Stratonovich and It{\=o} SDEs on $\cE$. We derive explicitly the SDE for Brownian motions on several important manifolds in applications, including left-invariant matrix Lie groups using embedded coordinates. Numerically, we propose three simulation schemes to solve SDEs on manifolds. In addition to the stochastic projection method, to simulate Riemannian Brownian motions, we construct a {\it second-order tangent retraction} of the Levi-Civita connection using a given $\cE$-tubular retraction. We also propose the {\it retractive Euler-Maruyama method} to solve a SDE,  taking into account the second-order term of a tangent retraction. We provide software to implement the methods in the paper, including Brownian motions of the manifolds discussed. We verify numerically that on  several compact Riemannian manifolds, the long-term limit of Brownian simulation converges to the uniform distributions, suggesting a method to {\it sample Riemannian uniform distributions}.}
%simulate Brownian motion on the matrix manifolds studied here

%a second-order retraction on $\cM$ for any torsion-free connection starting from a smooth retraction, which can be used to simulate Riemannian Brownian motion. The It{\=o} and Stratonovich form of the %Riemannian Brownian motion can be integrated using the stochastic retraction method, and we experimentally verify the resulting numerical schemes. % For several compact Riemannian manifolds, we show %numerically the long-term limit of Brownian simulation converges to the uniform distributions, suggesting the method could be useful in {\it sampling Riemannian uniform distributions}. We provide software to %simulate Brownian motion on the matrix manifolds studied here.}    
\end{abstract}

\subjclass{65C30, 65L20, 65C20, 60J65, 58J65}

\keywords{Stochastic Differential Equation, Riemannian Brownian Motion, Invariant manifold, Numerical integration of SDE, Sampling}
\maketitle

%% Add \usepackage{lineno} before \begin{document} and uncomment 
%% following line to enable line numbers
%% \linenumbers

%% main text
%%

%% Use \section commands to start a section
\section{Introduction}
Diffusion processes play an important role in both theoretical and applied sciences. In machine learning and data science, {\it diffusion maps} and {\it Laplacian-based} methods \cite{CoifmanLafon,BELKINNiy} are important tools for clustering and denoising. Recently, {\it diffusion models} \cite{NEURIPS2020_4c5bcfec} in generative AI could often be considered a discretized version of a stochastic differential equation (SDE). Beyond the Euclidean space, processes on manifolds also have applications in biology, physics \cite{Carlsson_2010}, and data science.  While the data are typically realized as vectors in an Euclidean space, they often need to satisfy certain constraints and symmetries \cite{GDL}. Diffusion processes on manifolds could be used to model constraints and symmetries of the data both directly \cite{GDL,RiemannScore} or in {\it latent spaces} \cite{MENKOVSKI}. {\it Diffusion mean} \cite{SomSvane} is another important application in computer vision.

The theory of stochastic differential equations, in particular, the theoretical construction of Brownian motion on manifolds has been known for a long time \cite{Hsu}. However, there is still a need for an efficient construction of both the equations and the simulations on specific manifolds arising in applications. In the present work, we make several contributions, specifically taking advantage of the embedded structure of the manifold as outlined below. In addition to simulating SDEs, our approach offers a sampling method on compact Riemannian manifolds, which is an important problem in itself.

There is an embedded coordinate description for Riemannian Brownian motions \cite{Hsu}. However, it assumes the metric on the manifold $\cM$ is restricted from the metric on $\cE$. For many practical problems, the embedded metric may not be the {\it natural one}. We address this issue by allowing the use of a metric different from the embedded metric (mathematically, we only require a Whitney embedding, not a Nash embedding). This leads to explicit, simple constructions of Brownian motions on several manifolds used in applied mathematics. In future research, we look to apply our approach to study SDE on manifolds arising in dynamics, robotic, and generative AI.

Two important ingredients of our approach are second-order differential operators and their local description, {\it second-order tangent vectors}. The relationship arises from the fact that the generator of an SDE is a second-order differential operator. %Middle school students first learn of tangent vectors through their embedded picture in global coordinates, which is the description widely used in science and engineering. 
Second-order tangent vectors, introduced by Schwartz, Meyer, and {\'E}mery \cite{Meyer,Emery2007,Emery1989,Schwartz}, were described more abstractly using exact sequences of bundles. The relationship with acceleration in \cite{Emery2007} allows us to propose a vector-calculus-type description using  global coordinates: they correspond to linear combinations of accelerations, and the constraints imposed on the manifold induce constraints on accelerations, hence on second-order tangent vectors. %We believe this approach is intuitive to an audience with a less abstract mathematical background. 
Just as a field of tangent vectors (vector fields) are first-order differential operators, a field of second-order tangent vectors is a second-order differential operator (both with no constant terms), and again, they could be described as global (on $\cE=\R^n$) differential operators satisfying constraints induced by the manifold constraints in \cref{eq:secondtangent}.

For a Riemannian Brownian motion, whose associated generator is the Laplace-Beltrami operator on a Riemannian manifold, we provide a formula for the Laplace-Beltrami operator in the global setup above. The formula involves two ingredients, the metric-compatible projection, and the Levi-Civita connection, and has the same format as the local coordinate formulas for the Laplace-Beltrami formula. Since the work of \cite{Edelman_1999}, it has been appreciated that the global, embedded coordinate formula for the Levi-Civita connection is often convenient in applications. We also describe the Riemannian Brownian motion in both It{\=o} and Stratonovich descriptions, in the case the metric is given by an operator-valued function. We also consider horizontal Brownian motions, with an example of the real Grassmann manifold.

In the above description, the Brownian SDEs on $\cM$ are extended to an SDEs on $\cE$, with $\cM$ as an invariant manifold: if we start at a point on $\cM$, then the process will stay on $\cM$ a.e. In the ordinary differential equation literature, an ODE on an invariant manifold is often solved effectively using {\it projection methods}. The invariant assumption means the exact solution lies on the manifold, and the projected (called retracted here) approximation has a similar order of accuracy as the approximation on $\cE$. This result holds for SDE with some modifications, as has been pointed out by several authors, reviewed in \cref{sec:previous}.

Another way to simulate an SDE is to use {\it tangent retractions}, a mechanism to map a pair of a manifold point and a sufficiently small tangent vector to a new manifold point. In \cite{SHSW}, the authors pointed out that a straightforward simulation of the Riemannian Brownian motion using a tangent retraction would add an {\it extra} second-order term to the generator. Thus, to simulate correctly, they propose using a second-order tangent retraction, (with the extra second-order term vanishes). We make two contributions to this point of view. First, we show starting with any tubular retraction (defined in \cref{sec:tubular}), we can construct a second-order retraction. Alternatively, starting with any SDE of It{\=o}'s form, given a tangent retraction, we can simulate the SDE's solution using the retraction if we modify the drift to account for the extra second-order term. This second result is in line with Meyer and {\'E}mery's notion of geodesic approximation of SDEs, and also with \cite{AmstrongBrigoIntrinsic}'s ideas of representing SDE by jets. The two results give us new alternatives to simulate SDEs with different choices of retractions. We offer an efficient numerical library \cite{LaplaceNumerical} implementing the methods described here.
%This allows us to simulate Riemmanian Brownian motions using embedded coordinates, for nonconstant metric cases.

Closed-form Levi-Civita connections and metric-compatible projections are available for several homogeneous manifolds used in applied mathematics \cite{Edelman_1999,AMS_book,NguyenOperator}.  We express the corresponding Laplace-Beltrami operator and Brownian motions in simple embedded forms. In particular, we treat all left-invariant matrix Lie groups in the same framework, and derive easy-to-evaluate diffusion drifts. We believe the method described here, with the Levi-Civita connection described in \cite{NguyenOperator} will allow us to simulate complex systems in robotics or molecular dynamics. Just as the ability to simulate effectively on a flat space has many real-world applications, we believe the ability to simulate effectively on manifolds will be important in applied problems. As an application, the discussed methods could be used to sample {\it uniform distributions} on compact Riemannian manifolds.

\subsection{Notation}In the following, we use $\rD$ to denote the Euclidean ($\R^n$) directional derivative, $\nabla$ to denote a covariant derivative. We use the Einstein's summation convention extensively, dropping the summation symbols when there is a pair of matching indices. To avoid confusion, we will make the best effort to indicate when it is used.

For two vector spaces $\cE$ and $\cF$, we denote by $\Lin(\cE,\cF)$ the space of linear maps from $\cE$ to $\cF$. We denote the Kronecker delta by $\delta^i_j$. Denote by $\Herm{n}$ or $\Herm{\cE}$
the space of symmetric matrices in $\R^{n\times n}$, or symmetric operators on an inner product space $\cE$. For an operator $P$ on a vector space $\cE$, we write $\omega_P$ for $P(\omega)$ for $\omega\in\cE$ to save space. Thus, $\omega_{\sym} = \frac{1}{2}(\omega + \omega^{\sfT})$,
$\omega_{\asym} = \frac{1}{2}(\omega - \omega^{\sfT})$ for a square matrix $\omega$.

\subsection{Outline}We review previous works in \cref{sec:previous} and related linear algebra and differential geometric background in \cref{sec:background}. In \cref{sec:SOO}, we describe second-order tangent vectors and second-order differential operators in terms of constraints and demonstrate the relationship with invariant manifolds.
We prove our main results on the Laplace-Beltrami operator and Riemannian Brownian motions in \cref{sec:laplaceBeltrami}. In \cref{sec:examples}, we go over several examples of matrix manifolds, in particular, we derive Brownian motion equations on matrix Lie groups. We consider methods to simulate the equations for  Riemannian Brownian motions in \cref{sec:numeric}, including the stochastic projection method and a second-order tangent retraction method. We perform several numerical tests, comparing expected values computed from several simulation methods for the Riemannian Brownian motion equations for different manifolds. We also compare results from Brownian motion simulations with those from integrating the heat kernel for the sphere, and also comparing long-time simulation of Brownian motion with uniform sampling on the sphere, $\SO(n)$, the Stiefel, and the Grassmann manifolds. We end the paper with a brief discussion of future directions.

\section{Previous works}\label{sec:previous} In recent years, stochastic differential equations and Riemannian Brownian motion have found applications in data science. In clustering problems, the concept of diffusion means \cite{eltznerDiffusionMeansGeometric2023} is an important candidate for mean values on Riemannian manifolds. The guided Riemannian Brownian process \cite{JenJoshSommer} allows us to interpolate data points and to learn underlying metrics. Much of this is done in local coordinates. As mentioned, we focus on global coordinates in this paper.

The link between stochastic processes and second-order differential operators has been understood from the previous works of Schwartz, Meyer and {\'E}mery \cite{Meyer,Emery2007,Emery1989,Schwartz}. The link allows us to formulate the {\it stochastic projection method} in \cref{sec:projgeo} using {\it tubular} retractions (essentially a {\it feasibility enhancement} method, defined in \cref{sec:numeric}) following the approaches of \cite{ZhouZhangHongSong,SoloStiefel,SoloWang}.

As mentioned in the introduction, in \cite[theorem 2.2]{SHSW}, the authors show that a straightforward use of a {\it tangent retraction} to simulate a Brownian motion adds an unintended {\it extra } term to the generator of a Riemannian Brownian process (and SDEs in general). Thus, to avoid this extra term, a second-order retraction should be used to simulate the original equation. From \cite{AbM}, the nearest point retraction is a second-order retraction for the embedded metric. Our contribution to this approach is to show this is a special case of {\it our construction of second-order retractions} using 
almost any {\it tubular retraction}. Our method works for both embedded and nonconstant metrics and could be considered a retractive version of the retraction suggested in \cite[section 2.1]{SHSW} for local coordinates.

On the other hand, Armstrong, Brigo, and collaborators analyzed the relationship between second-order differential operators and SDEs in a series of papers. Among them, \cite{AmstrongBrigoIntrinsic} studied It{\=o} equations as jets. They show \cite[theorem 2.4]{AmstrongBrigoIntrinsic} that a 2-jet scheme (a {\it field of curves}) converges to the solution of a SDE. Our {\it retractive Euler-Maruyama's method }\cref{sec:retractivesim} complements this scheme, the iteration in \cref{eq:retracEM} could be considered as a construction of a jet using a given retraction to solve a given equation. The geodesic approximation discussed in \cite{Emery2007} could also be considered related to this result.  For Lie groups, the exponential map is a retraction, thus, could be used with this method if efficient. This result "explains" why the drift does not appear in the geodesic random walk approximation \cite{Gangolli,Jorgensen,SHSW}: the modified drift of \cref{eq:retracEM} is zero if we use a geodesic or its second-order approximation.

In \cite{AMS_book,NguyenRayleigh}, a second-order tangent retraction gives rise to a connection associated with a second-order expansion. If we simulate using a retraction, \cref{theo:REM} shows we need to adjust the drift based on this second-order term, this is consistent  with the works of previous authors that the simulation of an It{\=o} equation on a manifold is dependent on the choice of a connection.

%Schwartz, Meyer, {\'E}mery and later on Armstrong, Brigo and co-authors
In \cite{ChengZhangSra}, the authors obtained intrinsic estimates for the geometric Euler-Maruyama's method (using the exponent map as the retraction), and took a further step to analyze non-Gaussian noise on manifolds. They obtain a strong order of accuracy of $\frac{3}{2}$ in Riemannian distance, a better estimate than our results for retractions. It will be interesting to extend their intrinsic estimates to retractions.

For a Stiefel manifold $\cM$, \cite{SoloStiefel} derived conditions for a stochastic differential equation on $\cE$ to have $\cM$ as an invariant manifold. We establish here the link between the second-order differential operators on $\cM$ with the invariant manifold condition for $\cM$, extending that result. There are several other pioneering works by the authors of that paper considering the case of the sphere, 
$\SO(N),\SE(N)$ \cite{PiggottSolo}, or alternative iterations on the Stiefel manifold.

The local form of the Laplace-Beltrami operator is known classically \cite[eq 3.3.11]{Hsu}. We show here that the global coordinate formula takes a similar form. For the sphere, this formula is known \cite{Stroock}. A formula for the global Laplace-Beltrami operator appeared in \cite{LaplaceEmb}, where the authors derived the Laplace-Beltrami operator for $\SO(N)$. That formula, derived using the Lagrange multiplier method, is equivalent to ours. We aim to make the relationship with the projection and connection more explicit and to make the relationship with the trace apparent. To our knowledge, the formulas for the Grassmann, Stiefel, and Symmetric Positive Definite manifolds are new. While the Riemannian Brownian process for a Lie group is known \cite{liao2004levy}, we derive new explicit expressions for the It{\=o} and Stratonovich drifts.

Our Stratonovich equation for Riemannian Brownian motion is an extension of \cite[equation 3.2.6]{Hsu}, which requires a Nash embedding. We only require a Whitney embedding, which extends the applicability to larger classes of manifolds used in applications. For example, most left-invariant metrics on a matrix Lie group or its quotient are non-constant (see \cref{eq:gmetric}). The affine-invariant metric on the manifold of positive-definite matrices is not constant, and for $\alpha_0\neq \alpha_1$, the metric in \cref{subsec:stiefel} for a Stiefel manifold is also not an embedded metric. As seen in these examples, the advantage of our approach is we can change the metric without changing the embedding.

The estimate of the rate of convergence of Riemannian Brownian motion to the Riemannian uniform distribution is in \cite{Saloff1994}. As mentioned there, the convergence itself was already well-known.
\section{Background}\label{sec:background}
We will assume familiarity with stochastic processes, ODE and SDE. We review here the main concepts of differential geometry and the linear algebra required. For further reviews of tensor notations, geometry, and Lie groups, the readers can follow textbooks such as \cite{LeeSmooth,ONeil1983,Gallier} and the article \cite{Edelman_1999}. The last article is foundational to the approach of using embedded coordinates for embedded manifolds that we use here. 

\subsection{Some linear algebra: trace, trace sums of bilinear operators and projections}
In Riemannian geometry, the trace is often expressed in terms of an orthonormal basis. However, it is often convenient to use dual bases. The trace of an operator $A$ in a vector space $\cE$ could be computed on any basis $\cB = \{v_i\}_{i=1}^n$ as $\sum_{i=1}^n v^i(Av_i)$, where $v^i$ is the functional sending $w\in\cE$ to $c_i$ if $w = \sum_i c_iv_i$, and the trace is independent of the choice of $\cB$. If $\cE$ is equipped with a nondegenerate $\R$-valued bilinear form $\rB$, then $\{v^1,\ldots,v^n\}$ could be identified with a basis of $\cE$, 
(via the musical isomorphism $\sharp$ \cite[Chapter 11]{LeeSmooth}) such that $\rB(v^i, v_j) = \delta^i_j$, called the dual basis of $\cE$, and the trace is $\sum_{i=1}^n \rB(v^i, Av_i)$. An orthonormal basis is a self-dual basis (with $v_i = v^i$), thus, results for dual bases automatically apply to orthonormal basis.

For two finite-dimensional inner product spaces $\cV$ and $\cW$, their dual spaces $\cV^*$ and $\cW^*$ are identified with $\cV$ and $\cW$ by the inner product, hence if $A$ is an operator between $\cV$ and $\cW$ then its dual $A^*$ is identified with an operator from $\cW$ to $\cV$. We will denote by $A^{\sfT}$ this adjoint map. If we represent $A$ by the matrix $\bar{A}$ by choosing orthonormal bases on $\cV$ and $\cW$ then $A^{\sfT}$ is represented by the transpose matrix $\bar{A}^{\sfT}$.

Let $\cE$ and $\cF$ be two inner product spaces. A $\cF$-valued bilinear form $\Gamma$ on $\cE$ is a map from $\cE\times \cE$ to $\cF$ that is linear in each component. If $\cE$ is identified with $\R^n$, we can represent $\Gamma$ by a collection of matrices (a tensor) as follows. Let $\{f_1,\cdots, f_d\}$ be a basis of $\cF$. Then we can write
$\Gamma(\omega_1, \omega_2)=\sum_{l=1}^df_l\Gamma^l(\omega_1, \omega_2)
$
where each $\Gamma^l(\cdot, \cdot)$
is an $\R$-valued bilinear form, which could be given in matrix form $\Gamma^l(\omega_1, \omega_2)=\omega_1^{\sfT}J_l\omega_2$ for uniquely defined matrices $J_l\in\R^{n\times n}$, $l=1,\cdots, d$. Thus, we have
\begin{equation}\Gamma(\omega_1, \omega_2) = \sum_l f_l\omega_1^{\sfT}J_l\omega_2.\label{eq:gammacomp}
\end{equation}
In particular, $\sum_{i=1}^n\Gamma(v^i, v_i) = \sum_l f_l\Tr(J_l)$ for dual bases $\{v_i\}_{i=1}^n, \{v^i\}_{i=1}^n$ of $\cE$. The following lemmas establish trace properties of this kind of sum. %The case $\cV=\cW=\cE$ is most typical.
\begin{lemma}Let $\{v_i\}_{i=1}^{\dim_{\cV}}$ and $\{w_j\}_{j=1}^{\dim_{\cW}}$ be orthonormal bases of two inner product spaces $\cV, \cW$. Let $\cE$ and $\cF$ be two other inner product spaces. Let $A, B, C$ be three linear operators, where $A$ maps $\cV$ to $\cW$ with adjoint $A^{\sfT}$ mapping $\cW$ to $\cV$, $B$ maps $\cV$ to $\cE$ and $C$ maps $\cW$ to $\cE$. Let $\Gamma$ be a $\cF$-valued bilinear form on $\cE$ as in \cref{eq:gammacomp}. Then $BA^{\sfT}$ maps $\cW$ to $\cE$ and
  \begin{equation}\sum_i\Gamma(CAv_i, Bv_i) = \sum_j \Gamma (Cw_j, BA^{\sfT}w_j).\label{eq:traceManipulation}
\end{equation}      
\end{lemma}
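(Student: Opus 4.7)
The plan is to reduce the identity to the scalar-valued case and then recognize both sides as traces of the same endomorphism, up to cyclic permutation. The fact that $BA^{\sfT}\colon\cW\to\cE$ is a valid composition is immediate from the stated domains and codomains, so the only content is the trace identity.

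First, I would use the decomposition~\eqref{eq:gammacomp} of $\Gamma$ along a basis $\{f_l\}$ of $\cF$ to reduce to showing
\[
\sum_{i=1}^{\dim_\cV} \Gamma^l(CAv_i, Bv_i) \;=\; \sum_{j=1}^{\dim_\cW} \Gamma^l(Cw_j, BA^{\sfT}w_j)
\]
for each $\R$-valued component $\Gamma^l$. Using the inner product on $\cE$, write $\Gamma^l(\omega_1,\omega_2)=\langle \omega_1, \tilde{J}_l \omega_2\rangle_\cE$ where $\tilde{J}_l\colon\cE\to\cE$ is the operator associated with $J_l$. This reduces the problem to a purely operator-theoretic identity not involving $\Gamma$ at all.

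Next, I would use the adjoint relations for $A,B,C$ together with the fact that for any operator $X$ on a finite-dimensional inner product space and any orthonormal basis $\{u_k\}$ one has $\sum_k \langle u_k, X u_k\rangle = \Tr(X)$. Applied to both sides this gives
\[
\sum_i \langle CAv_i, \tilde{J}_l Bv_i\rangle_\cE \;=\; \Tr_{\cV}\!\bigl(A^{\sfT} C^{\sfT} \tilde{J}_l B\bigr), \qquad
\sum_j \langle Cw_j, \tilde{J}_l BA^{\sfT} w_j\rangle_\cE \;=\; \Tr_{\cW}\!\bigl(C^{\sfT} \tilde{J}_l B A^{\sfT}\bigr),
\]
where the subscripts record which space the endomorphism lives on. Finally, cyclic invariance of the trace, applied to $X=A^{\sfT}\colon \cW\to\cV$ and $Y=C^{\sfT}\tilde{J}_l B\colon \cV\to\cW$, gives $\Tr_{\cV}(XY)=\Tr_{\cW}(YX)$ and closes the argument. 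Summing back over $l$ recovers the $\cF$-valued statement.

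There is no real obstacle here; the only care needed is bookkeeping the sources and targets of each operator so that the cyclic move $\Tr(XY)=\Tr(YX)$ is applied between a $\cV\!\to\!\cW$ map and a $\cW\!\to\!\cV$ map, and so that the two sides of the identity are each written as a trace on the correct space. If one prefers to stay in matrices, the same argument is the elementary identity $\Tr(\bar{A}^{\sfT}\bar{C}^{\sfT}J_l\bar{B})=\Tr(\bar{C}^{\sfT}J_l\bar{B}\bar{A}^{\sfT})$ applied component-wise in $\{f_l\}$.
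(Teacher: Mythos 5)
Your proposal is correct and follows essentially the same route as the paper's own argument: decompose $\Gamma$ over a basis $\{f_l\}$ of $\cF$, identify each orthonormal-basis sum with a trace ($\Tr_{\cV}(A^{\sfT}C^{\sfT}J_lB)$ on one side, $\Tr_{\cW}(C^{\sfT}J_lBA^{\sfT})$ on the other), and conclude by cyclic invariance of the trace between a $\cV\to\cW$ map and a $\cW\to\cV$ map. No gaps.
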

That means we can transform a sum in $\{v_i\}$'s to a sum in $\{w_j\}$'s, by taking transpose and move $A^{\sfT}$ in front of $w_j$. This follows from
$$\begin{gathered}
  \sum_i\Gamma(CAv_i, Bv_i) =\sum_{l} f_l\sum_i  v_i^{\sfT}A^{\sfT}C^{\sfT}J_lBv_i=\sum_l f_l\Tr_{\cV} A^{\sfT}C^{\sfT}J_lB\\
  =\sum_l f_l\Tr_{\cW} C^{\sfT}J_lBA^{\sfT}=\sum_l f_l\sum_jw_j^{\sfT} C^{\sfT}J_lBA^{\sfT}w_j=\sum_j\Gamma(Cw_j, BA^{\sfT}w_j).
\end{gathered} $$ 
\begin{lemma}Let $\cE$ be an inner product space, identified with $\R^n$ with the inner product $(v, w)\mapsto v^{\sfT}w$. Let $\sfg$ be a nondegenerate symmetric matrix defining another pairing $(v, w)\mapsto v^{\sfT}\sfg w$. Let $\{e_i\}_{i=1}^n$ be the standard basis of $\cE=\R^n$. Let $\cV$ be a subspace of $\cE$ of dimension $d$. Let $\{v_i\}_{i=1}^{d}\subset\cV, \{v^i\}_{i=1}^{d}\subset\cV$ be dual bases of $\cV$ with respect to the pairing $\sfg$, thus, $(v^i)^{\sfT}\sfg v_j=\delta^i_j$. Let $\Pi$ be the $\sfg$-compatible projection from $\cE$ onto $\cV$, that means $\Pi$ is an operator on $\cE$ such that $\Pi v = v$ for $v\in\cV$ and $(\Pi e)^{\sfT}\sfg v = e^{\sfT}\sfg v$ for $e\in\cE, v\in\cV$. Then $\Pi$ exists and is unique. If $\rB$ is a bilinear map from $\cE$ to another vector space $\cF$, then we have
  \begin{equation}\sum_{j=1}^d\rB(v_j, v^j)=\sum_{j=1}^d\rB(v^j, v_j) = \sum_{i=1}^n\rB(e_i, \Pi\sfg^{-1}e_i).\label{eq:Bsum}
  \end{equation}
  In particular, the sum is independent of the choice of the dual bases. The lemma holds if we replace $e_i$ with any orthonormal basis of $\cE$ under the standard pairing ($e_i^{\sfT}e_j=\delta^i_j$).  \label{lem:projection}
\end{lemma}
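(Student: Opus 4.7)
The plan is to first establish existence and uniqueness of $\Pi$ via a direct sum decomposition, then express $\Pi$ in matrix form using the dual bases, and finally reduce all three sums to the same trace. The underlying observation is that the implicit assumption that the dual bases $\{v_i\},\{v^i\}$ of $\cV$ exist forces the restriction $\sfg|_{\cV}$ to be nondegenerate, so that $\cE = \cV \oplus \cV^{\perp_{\sfg}}$ with $\cV^{\perp_{\sfg}}:=\{w\in\cE: w^{\sfT}\sfg v=0\ \forall v\in\cV\}$. The defining property $(\Pi e - e)^{\sfT}\sfg v=0$ for $v\in\cV$ then identifies $\Pi$ as the projection onto $\cV$ along $\cV^{\perp_{\sfg}}$, which gives existence and uniqueness.

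Next I would compute $\Pi$ in closed form. Writing $\Pi e=\sum_i c_i(e)v_i$ and testing against the dual basis vector $v^j$ using $(v^i)^{\sfT}\sfg v_j=\delta^i_j$ immediately yields $c_i(e)=(v^i)^{\sfT}\sfg e$. Arrange $V=[v_1,\ldots,v_d]$ and $V^*=[v^1,\ldots,v^d]$ as $n\times d$ matrices; then
\[
 \Pi = V(V^*)^{\sfT}\sfg, \qquad \Pi\sfg^{-1}=V(V^*)^{\sfT}.
\]
A key point I will highlight is that $\Pi$ is self-adjoint with respect to $\sfg$ (both $(\Pi x)^{\sfT}\sfg y$ and $x^{\sfT}\sfg(\Pi y)$ equal the pairing of the $\cV$-parts of $x$ and $y$), hence $\Pi^{\sfT}\sfg=\sfg\Pi$, which makes $\Pi\sfg^{-1}$ a symmetric matrix. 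Equivalently, $V(V^*)^{\sfT}=(V(V^*)^{\sfT})^{\sfT}=V^*V^{\sfT}$.

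For the sums, I would write $\rB(x,y)=\sum_l f_l\, x^{\sfT}J_l y$ as in \cref{eq:gammacomp}. Then
\[
 \sum_{j=1}^d \rB(v^j, v_j) = \sum_l f_l\,\Tr\bigl((V^*)^{\sfT}J_lV\bigr) = \sum_l f_l\,\Tr\bigl(J_l V(V^*)^{\sfT}\bigr) = \sum_l f_l\,\Tr(J_l\Pi\sfg^{-1}),
\]
while
\[
 \sum_{i=1}^n \rB(e_i,\Pi\sfg^{-1}e_i) = \sum_l f_l\sum_i e_i^{\sfT}J_l\Pi\sfg^{-1}e_i = \sum_l f_l\,\Tr(J_l\Pi\sfg^{-1}),
\]
so the second equality in \cref{eq:Bsum} follows. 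The first equality, $\sum_j\rB(v_j,v^j)=\sum_j\rB(v^j,v_j)$, follows by the same computation combined with the symmetry of $\Pi\sfg^{-1}$: $\Tr(V^{\sfT}J_lV^*)=\Tr(J_lV^*V^{\sfT})=\Tr(J_lV(V^*)^{\sfT})$. Alternatively, one can simply observe that relabeling $w_j:=v^j$, $w^j:=v_j$ produces another pair of dual bases, so the two sums have the same structural form.

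The orthonormal-basis statement is immediate: for any operator $M$ on $\cE$ and any $\{e_i\}$ orthonormal under the standard pairing, $\sum_i \rB(e_i, Me_i)=\sum_l f_l \Tr(J_lM)$, which depends only on $M$. Applying this with $M=\Pi\sfg^{-1}$ gives the last assertion. The only real obstacle is bookkeeping with transposes and with the two different pairings (standard versus $\sfg$); once one notes that $\Pi\sfg^{-1}$ is symmetric, all three expressions collapse to the single trace $\sum_l f_l\,\Tr(J_l\Pi\sfg^{-1})$.
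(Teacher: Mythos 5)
Your proof is correct and follows essentially the same route as the paper's: both reduce all three sums to the single trace $\sum_l f_l\Tr(J_l\Pi\sfg^{-1})$ via the identification $\Pi\sfg^{-1}=V(V^*)^{\sfT}$ and cyclicity of the trace (the paper packages this cyclicity step as its preceding lemma on \cref{eq:traceManipulation}, which you simply inline). If anything, your write-up is slightly more careful: you explicitly justify the $\sfg$-self-adjointness of $\Pi$, i.e.\ the symmetry $V(V^*)^{\sfT}=V^*V^{\sfT}$, which is what makes the first equality $\sum_j\rB(v_j,v^j)=\sum_j\rB(v^j,v_j)$ work and which the paper's proof uses only implicitly.
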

\begin{proof}If $\Pi v = \sum_i c_i v_i$ then $c_i = (\Pi v)^{\sfT}\sfg v^i = v^{\sfT}\sfg v^i$ as $\Pi$ is metric-compatible. This shows $\Pi$ is unique and is equal to $\sum_i (v^{\sfT}\sfg v^i) v_i$, which also verifies existence.
  Let $\uV$ be the $\R^{n\times d}$ matrix formed by columns of $v_i\in\cV\subset\cE=\R^n$, $i=1\cdots d$, and $\oV$ is formed similarly by columns of $v^i$.
  Then $v_i = \uV\epsilon_i, v^i = \oV\epsilon_i$, where $\epsilon_i, i=1\cdots d$ is the $i$-th column of $\dI_d$. The relation $(v^i)^{\sfT}\sfg v_j = \delta_{ij}$ implies $\epsilon_i^{\sfT}\oV^{\sfT}\sfg\uV\epsilon_j = \delta_{ij}$, or $\oV^{\sfT}\sfg\uV=\dI_d$. Applying \cref{eq:traceManipulation} with the bases $\epsilon_j$'s and $e_i$'s (in that setting $\cV=\R^d,\cW=\cE=\R^n$)
  $$\sum_{j=1}^d\rB(v_j, v^j) = \sum_{j=1}^d\rB(\uV\epsilon_j, \oV\epsilon^j) =\sum_{j=1}^n\rB(e_j, \oV\uV^{\sfT} e^j).
  $$
  Thus, it remains to show $\oV\uV^{\sfT}=\Pi\sfg^{-1}$, or $\Pi=\oV\uV^{\sfT}\sfg$. This is just the expression $\Pi v= \sum_i (v^{\sfT}\sfg v^i) v_i$ above.
The last statement of the theorem follows from the previous lemma, as if we replace $e_i$ by $Ue_i$ or an orthogonal matrix $U$, then $\rB(Ue_i, \Pi\sfg^{-1} Ue_i) = \rB(e_i, \Pi\sfg^{-1} UU^{\sfT}e_i) = \rB(e_i, \Pi\sfg^{-1}e_i)$.
\end{proof}  
\begin{lemma}\label{lem:expect}Let $\rB$ be a $\cE_L=\R^k$-valued bilinear form on $\cE=\R^n$. Let $\xi = (\xi_1,\cdots,\xi_n)\sim N(0, \dI_{\cE})$ be a multivariate normal random variable. Then
\begin{gather}\E\rB(\xi, \xi ) = \sum_{i=1}^n\rB_{ii}= \sum_{i=1}^n\rB(e_i, e_i),\label{lem:expect1}\\
\E\lvert v^{\sfT} \xi \rvert  = (\frac{2}{\pi})^{\frac{1}{2}}\lvert v \rvert\text{ for fixed } v\in\cE
\label{lem:expect2},\\
\E\lvert \rB(\xi, \xi ) - \sum_{i=1}^n\rB(e_i, e_i)\rvert^2 = 2\sum_i|B_{ii}|^2 + \sum_{j<i}|B_{ij}+B_{ji} |^2.\label{lem:expect3}
\end{gather}
\end{lemma}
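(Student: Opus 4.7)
The plan is to expand $B$ in the standard basis and reduce each identity to a standard Gaussian moment computation. Writing $B_{ij} := B(e_i, e_j) \in \EL$, bilinearity gives $B(\xi, \xi) = \sum_{i,j} B_{ij}\,\xi_i\xi_j$, a vector-valued quadratic form in the components of $\xi$. For \cref{lem:expect1} I would take expectations term by term and use $\E\xi_i\xi_j = \delta_{ij}$ to collapse the double sum to the diagonal. For \cref{lem:expect2}, note that $v^{\sfT}\xi \sim N(0, |v|^2)$, so $|v^{\sfT}\xi|$ is half-normal with mean $\sqrt{2/\pi}\,|v|$ by the classical half-normal formula.

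The substantive step is \cref{lem:expect3}, where the main tool is Isserlis' (Wick's) formula for Gaussian fourth moments. Set $Y := B(\xi,\xi) - \sum_i B_{ii} = \sum_{i,j} B_{ij}\bigl(\xi_i\xi_j - \delta_{ij}\bigr)$, so that
$$\E|Y|^2 = \sum_{i,j,k,l} \langle B_{ij}, B_{kl}\rangle\, \E\!\left[(\xi_i\xi_j - \delta_{ij})(\xi_k\xi_l - \delta_{kl})\right].$$
Applying $\E[\xi_i\xi_j\xi_k\xi_l] = \delta_{ij}\delta_{kl} + \delta_{ik}\delta_{jl} + \delta_{il}\delta_{jk}$, the bracketed expectation simplifies to $\delta_{ik}\delta_{jl} + \delta_{il}\delta_{jk}$, giving
$$\E|Y|^2 = \sum_{i,j} |B_{ij}|^2 + \sum_{i,j} \langle B_{ij}, B_{ji}\rangle.$$

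The remaining step is bookkeeping: split each sum into diagonal ($i = j$) and off-diagonal contributions, and in the off-diagonal part pair $(i,j)$ with its transpose $(j,i)$. The diagonal contributes $\sum_i |B_{ii}|^2 + \sum_i |B_{ii}|^2 = 2\sum_i |B_{ii}|^2$, and the off-diagonal contributes $\sum_{j<i}\bigl(|B_{ij}|^2 + |B_{ji}|^2 + 2\langle B_{ij}, B_{ji}\rangle\bigr) = \sum_{j<i} |B_{ij} + B_{ji}|^2$, matching the claim. The only mild obstacle is precisely this symmetrization accounting; everything else is a direct consequence of the standard Gaussian moment identities.
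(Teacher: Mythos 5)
Your proposal is correct and follows essentially the same route as the paper: expanding $\rB$ in the standard basis and using $\E\xi_i\xi_j=\delta_{ij}$ for \cref{lem:expect1}, reducing to a one-dimensional half-normal for \cref{lem:expect2}, and computing the variance via Gaussian fourth moments for \cref{lem:expect3}. The only cosmetic difference is that you center first and invoke Isserlis' formula, whereas the paper computes $\E\lvert\rB(\xi,\xi)\rvert^2$ by enumerating the moments $\E\xi_i^4=3$, $\E\xi_i^2\xi_j^2=1$ and then subtracts $\lvert\E\rB(\xi,\xi)\rvert^2$; the symmetrization bookkeeping at the end is identical.
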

\begin{proof} Write $\rB(\xi, \xi) = \sum\rB_{ij}\xi_i\xi_j$ for $\rB_{ij}\in\cE_L$ then
$\E\rB(\xi, \xi ) = \sum_{ij}\rB_{ij}\E(\xi_i\xi_j)= \sum_{ij}\rB_{ij}\delta^i_j=\sum_i \rB_{ii} = \sum_i\rB(e_i, e_i)$. For the next identity, if $v\neq 0$ then let $u_1$ be the unit vector in direction $v$, and complete it to an orthonormal basis $\{u_1, \cdots u_n\}$. Change to this basis, we can reduce the problem to the case $n=1$ and $v=|v|u_1$, and the second identity follows from $\E|\xi_1|=(\frac{2}{\pi})^{\frac{1}{2}}$. 

Similar to the scalar case, $\E\lvert \rB(\xi, \xi ) - \E\rB(\xi, \xi)\rvert^2 = \E\lvert \rB(\xi, \xi )|^2 - |\E\rB(\xi, \xi)\rvert^2$,
$$
\E\lvert \rB(\xi, \xi ) - \sum_i\rB(e_i, e_i)\rvert^2 =
\E\lvert \rB(\xi, \xi )\rvert^2 - \lvert \sum_i\rB(e_i, e_i)\rvert^2,$$
$$\begin{gathered}\E\lvert \rB(\xi, \xi )\rvert^2 = \sum_i\lvert B_{ii}\rvert^2 \E\xi_i^4 +\sum_{i\neq j}\rB_{ii}^{\sfT}B_{jj}\E\xi_i^2\xi_j^2
+\sum_{i\neq j}(\rB_{ij}^{\sfT}\rB_{ij} +  \rB_{ij}^{\sfT}\rB_{ji})\E\xi_i^2\xi_j^2\\
=3\lvert B_{ii}\rvert^2 + 2\sum_{i < j}\rB_{ii}^{\sfT}B_{jj} + \sum_{i < j}|\rB_{ij}+\rB_{ji}|^2,
\end{gathered}
$$
where we note $\E\xi_i = \E\xi_i^3=0, \E\xi_i^4=3$ for each $i$, $\E\xi_i^2\xi_j^2 = 1$ for all $i, j$ and 
$$\sum_{i\neq j}(\rB_{ij}^{\sfT}\rB_{ij} +  \rB_{ij}^{\sfT}\rB_{ji}) = \sum_{i\neq j}\rB_{ij}^{\sfT}(\rB_{ij} +  \rB_{ji}) = \sum_{i<j}(\rB_{ij} +  \rB_{ji})^{\sfT}(\rB_{ij} +  \rB_{ji}).
$$
Since $\lvert \sum_i\rB(e_i, e_i)\rvert^2 = \sum_i |\rB_{ii}|^2 + 2\sum_{i<j}\rB_{ii}^{\sfT}\rB_{jj} $, we get \cref{lem:expect3}.
\end{proof}
\subsection{Riemannian geometry}
We here give a brief review of concepts in differential geometry relevant to this paper.
More details can be found in \cite{ONeil1983,LeeSmooth}, for example.

Manifolds often appear in applications as smooth constrained sets of an inner product (Euclidean) space, or quotients (equivalent class) of such constrained sets under a group action. Abstractly, manifolds are defined by compatible coordinate charts. The Levi-Civita connection, gradient, and Hessian in the local coordinate picture are classically developed. In this section, we also explain how they are represented in global coordinates.

A Riemannian structure on a manifold $\cM$, could be understood as a smooth, symmetric, positive-definite pairing of tangent spaces of $\cM$, thus, it provides a scalar, symmetric bilinear function on $T_x\cM\times T_x\cM$ for all $x\in\cM$, varying smoothly with $x$. When $\cM$ is a submanifold of an inner product space $\cE$, identified with $\R^n$ (for an integer $n>0$) for convenience, there is another pairing using the inner product of $\cE$, $(\xi, \eta)\mapsto \xi^{\sfT}\eta$, if the tangent vectors $\xi, \eta$ are identified with vectors in $\cE$. The Riemannian pairing is then given by $\xi^{\sfT}\sfg(x)\eta$ for an operator $\sfg(x)$ on $T_x\cM\subset\cE$. We can always extend $\sfg(x)$ to a nondegenerate symmetric bilinear paring on $\cE$ \cite{NguyenGeoglobal}, also called $\sfg(x)$, thus, we have a map $\sfg:\cM\mapsto \Herm{\cE}$, restricting to a positive definite bilinear form on the tangent bundle of $\cM$. This approach allows us to use the coordinates of a {\it Whitney embedding} (an embedding in the differential-geometric sense but does not need to preserve metric), which appears often in applications. For $x\in\cM$, the operator $\sfg(x)$ defines a pairing on $\cE$, denoted by $\langle,\rangle_{\sfg}$ or $\langle,\rangle_{\sfg,x}$, which is the Riemannian pairing when restricted to $T_x\cM$.

The first important ingredient \cite{Edelman_1999} in the embedded picture is the metric-compatible projection as in \cref{lem:projection}. For each $x\in\cM$, denoted by $\Pi(x)$ the metric-compatible projection associated with $\sfg(x)$. Thus, we have a $\Lin(\cE,\cE)$-valued function $\Pi:x\mapsto \Pi(x)$, with each $\Pi(x)$ being idempotent and metric-compatible. For a smooth embedding with smooth $\sfg$, $\Pi(x)$ is smooth \cite{NguyenGeoglobal}. In embedded coordinates, $\Pi(x)$ is a matrix in $\R^{n\times n}$, $\Pi\sfg^{-1}$ is symmetric, and we can write $\Pi =\dI_{\cE}- \sfg^{-1}(\rC')^{\sfT}(\rC'\sfg^{-1}(\rC')^{\sfT})^{-1}\rC'$ if $\cM$ is given by $\rC(x) = 0$ (locally) near $x$.

Given a function $f$ on $\cM$ and a tangent vector $\xi$ at $x\in\cM$, the directional derivative $\rD_{\xi}f$ could be evaluated in embedded coordinates, by extending $f$ to a function $\bar{f}$ (but we will often write as $f$) on $\cE$ and consider $\xi$ as a vector in $\cE$, $\rD_{\xi}\bar{f}$ evaluated in the later picture is the same as $\rD_{\xi}f$ in the local coordinate. The Riemannian gradient at $x$, defined as the tangent vector $\rgrad_f(x)$ such that $(\rD_{\xi}f)(x) = \langle \xi, \rgrad_f(x)\rangle_{\sfg}$ for all $\xi\in T_x\cM$ is given in local coordinates as $(\sfg^{-1}[\frac{\partial f}{\partial z_1},\cdots, \frac{\partial f}{\partial z_d}]^{\sfT})_{|x}$, $d=\dim\cM$ and $(z_1,\cdots, z_d)$ are local coordinate functions. In global ($\cE=\R^n$)-coordinates, if $\egrad_f$ is the gradient of the extended $f$ in the standard coordinates of $\R^n$ then \cite{Edelman_1999}
\begin{equation}\rgrad_f(x) =(\Pi\sfg^{-1}\egrad_f)_{|x}.\end{equation}

\subsubsection{The Levi-Civita connection}\label{subsec:Levi}
The Levi-Civita connection $\nabla$ in local coordinates is given by coefficients $\Gamma_{jk}^i$ such that for a local coordinate system $(z_1\cdots z_d)$, set $\partial_i = \frac{\partial}{\partial z_i}$ then $\nabla_{\partial_j}\partial_k = \Gamma_{jk}^i\partial_i$, with the first requirement is $\nabla_{\partial_j}\partial_k$ is a tangent vector. (Note that we use the Einstein's summation convention in this subsection, so $\Gamma_{jk}^i\partial_i$ means $\sum_i \Gamma_{jk}^i\partial_i$ here). The second condition is torsion-free, $\Gamma_{jk}^i = \Gamma_{kj}^i$. The final condition is metric compatible, $\partial_j\langle\partial_l, \partial_k\rangle_{\sfg} = \langle \Gamma_{jl}^i\partial_i, \partial_k\rangle_{\sfg} + \langle \partial_l, \Gamma_{jk}^i\partial_i\rangle_{\sfg}.$

Globally, we identify vector fields with $\cE$-value functions in global coordinates $(x_1,\cdots, x_n)\in\R^n$, assume $\ttX=X^i\frac{\partial}{\partial x_i}$ and $\ttY= Y^j\partial_j\frac{\partial}{\partial x_i}$ then we have
\begin{equation}
  \nabla_{\ttX}\ttY = \rD_{\ttX}\ttY + \Gamma(\ttX, \ttY)\label{def:Levi}
\end{equation}
where $\Gamma(\ttX, \ttY) = X^jY^k\Gamma_{jk}^i \frac{\partial}{\partial x_i}$ and $\rD_{\ttX}\ttY= (X^i\frac{\partial Y^j}{\partial x_i})\frac{\partial}{\partial x_j}$ is the $\R^n$-derivative if we identify $\ttY$ with a vector-valued function in $\R^n$ then take the derivative in direction $\ttX$. Here, $\Gamma$ is bilinear in $\ttX$ and $\ttY$. If we only require $\nabla_{\ttX}\ttY$ to be a vector field, we have an {\it affine connection}. The additional requirements for the Levi-Civita connection are torsion-free, which means $\Gamma(\ttX, \ttY)=\Gamma(\ttY, \ttX)$, and metric compatibility, which means \begin{equation}\rD_{\ttX}\langle\ttY, \ttY\rangle_{\sfg} =
2\langle\ttY, \nabla_{\ttX}\ttY\rangle_{\sfg}.\label{eq:MetricComp}
\end{equation}
In this operator form, if $\cM\subset\cE$, and we consider $\ttY$ as an $\cE$-valued function from $\cM$, then $\Gamma$ is uniquely defined for vector fields $\ttX$ and $\ttY$  and is given as
\begin{equation}\Gamma(\ttX, \ttY) = -(\rD_{\ttX}\Pi)\ttY + \frac{1}{2}\Pi\sfg^{-1}((\rD_{\ttX}\sfg)\ttY + (\rD_{\ttY}\sfg)\ttX - \chi_{\sfg}(\ttX, \ttY)).\label{eq:GammaLeviCivita}
\end{equation}
where $\chi_{\sfg}$ satisfies $\ttZ^{\sfT}\chi_{\sfg}(\ttX, \ttY) = \ttY^{\sfT}(\rD_{\ttZ}\sfg)\ttX$ for three vector fields $\ttX, \ttY, \ttZ$, see \cite{NguyenOperator}. For $x\in\cM$, $\Gamma$ (dependent on $x$) is uniquely defined on $T_x\cM$ but not on $\cE\times\cE$. We can choose any \emph{bilinear extension} of $\Gamma$ to $\cE\times\cE$, and will call such an extension $\Gamma$ a \emph{Christoffel function} of the Levi-Civita connection $\nabla$. Note, $\Gamma$ depends on the values of $\ttX$ and $\ttY$ and not on their derivatives.
%Without the requirement for metric-compatibility, \cref{def:Levi} for a bilinear function $\Gamma$ defines a {\it connection}, or covariant derivative, if $\nabla_{\ttX}\ttY$ is tangent, and torsion-free still means $\Gamma(\ttX, \ttY) = \Gamma(\ttY, \ttX)$ for two vector fields $\ttX$ and $\ttY$.

The Euclidean Hessian $\ehess_f=(\frac{\partial^2f}{\partial x_i\partial x_j})_{i,j=1}^n\in\R^{n\times n}$ of a scalar function $f$ on $\cM$ could be considered as a symmetric operator or a bilinear form. On $\cM$, the Riemannian Hessian $\rhess_f$ of $f$ could also be given in two forms (we use tensor conventions and transformations, see \cite[chapter 12]{LeeSmooth}, in particular, bilinear forms are $(0, 2)$ tensors and linear operators on $T\cM$ are $(1, 1)$-tensors). In the scalar form, at $x\in\cM$ and $v_1, v_2\in T_x\cM$ we extend $v_1, v_2$ to vector fields $\ttX, \ttY$, and compute $\rhess^{02}_f(x; v_1, v_2) := \rD_{\ttX}\rD_{\ttY} f - \rD_{\nabla_{\ttX}\ttY} f $, an expression not dependent on the extensions of $\ttX, \ttY$, and $f$ to an open neighborhood of $\cM\subset \cE$. Write $\rD_{\ttY}f = \egrad_f^{\sfT}\ttY$, $\rD_{\nabla_{\ttX}\ttY} f=\egrad_f^{\sfT}\nabla_{\ttX}\ttY$,
$$\begin{gathered}\rD_{\ttX}\rD_{\ttY} f - \rD_{\nabla_{\ttX}\ttY} f   
= \rD_{\ttX} (\egrad_f^{\sfT}\ttY)
 -  \egrad_f^{\sfT}(\rD_{\ttX}\ttY +\Gamma(\ttX, \ttY))\\
=(\rD_{\ttX} \egrad_f)^{\sfT}\ttY + \egrad_f^{\sfT}\rD_{\ttX}\ttY -  \egrad_f^{\sfT}(\rD_{\ttX}\ttY +\Gamma(\ttX, \ttY))\\
= (\rD_{v_1} \egrad_f)^{\sfT}v_2 -\egrad_f^{\sfT}\Gamma(v_1, v_2).
\end{gathered}$$
We get the expression (\cite[eq. (2.57)]{Edelman_1999}, \cite[eq. (3.12)]{NguyenOperator})
\begin{equation}\rhess^{02}_f(x; v_1, v_2) :=\rD_{\ttX}\rD_{\ttY} f - \rD_{\nabla_{\ttX}\ttY} f 
= v_1^{\sfT}(\ehess_f v_2) - \egrad_f^{\sfT}\Gamma(v_1, v_2)\label{eq:rhess02}.
\end{equation}
The Riemannian Hessian vector product \cite{Edelman_1999,AMS_book} is given by 
\begin{equation}\rhess_f^{11}(x)\ttX= \nabla_{\ttX}\rgrad_f.\end{equation}
By the metric compatibility of $\nabla$, it relates to $\rhess_f^{02}$ in the relation below
$$\langle \ttY, \nabla_{\ttX}\rgrad_f\rangle_{\sfg} = \rD_{\ttX}\langle \ttY, \rgrad_f\rangle_{\sfg} - \langle\nabla_{\ttX}\ttY, \rgrad_f \rangle_{\sfg}=\rD_{\ttX}\rD_{\ttY}f-\rD_{\nabla_{\ttX}\ttY}f$$
for a tangent vector field $\ttY$. 
We will suppressed the superscripts ${}^{02}$ and ${}^{11}$ going forward for brevity, to be understood from the context.

The Laplace-Beltrami operator $\Delta_f$ is the trace of the Riemannian Hessian operator $\xi\mapsto \nabla_{\xi}\rgrad_f$ on $T_x\cM$, thus using either expression of $\rhess_f$
\begin{equation}\begin{gathered}\Delta_f = \rhess_f(V^i, V_i)
=\langle V^i, \nabla_{V_i}\rgrad_f \rangle_{\sfg}
  = \rD_{V_i}\rD_{V^i} f -\rD_{\nabla_{V_i}V^i}f\\
  =V_i^{\sfT}(\ehess_f V^i) - \egrad_f^{\sfT}\Gamma(V_i, V^i)
  \label{eq:laplacelocal}
  \end{gathered}
\end{equation}  
(using Einstein's summation convention) for a pair of (locally) dual bases of vector fields $\{V_1,\cdots, V_d\}, \{V^1,\cdots, V^d\}$ near $x\in \cM$, where the last expression is from \cref{eq:rhess02}, and that expression does not involve derivatives of $V_i$'s,  only their values. The {\it local} assumption is because we may not have a global basis of vector fields, (for example, by the hairy ball theorem). As the trace is independent of the choice of basis, we can use a different basis when one local basis eventually becomes degenerate.

Finally, the Riemannian Brownian motion is a diffusion process with generator $\frac{1}{2}\Delta_f$, see \cite{Hsu,Emery2007} for further background.

\subsection{Tubular neighborhoods and retractions}\label{sec:tubular}We will construct numerical algorithms for SDEs on embedded manifolds using {\it retractions}. The term retraction comes in two different, but related contexts, depending on the submanifold setting. We will refer to \cite[chapter 6]{LeeSmooth}, \cite[chapter II.11]{Bredon2013} for backgrounds, and only review the concepts related to our work. Informally, a tubular neighborhood $D_{\cM}\subset\cE$ is an open subset of $\cE$ containing $\cM$ and looks like a tube around $\cM$.
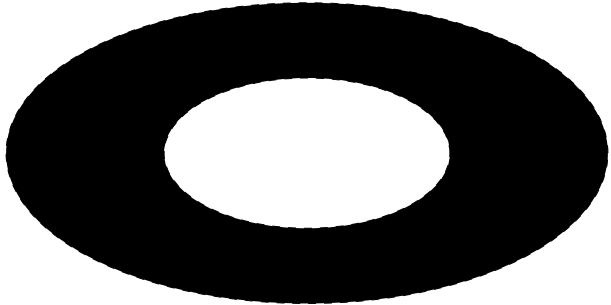
\begin{figure}[htbp!]
\centering
\begin{tikzpicture}[>=latex]
        \draw[dashed,name path=L] (0,0) ellipse (4cm and 2cm)node[left=2.3cm] {$\mathcal{M}$}; 
        \filldraw[black] (3.7, 0) circle(1pt)node[anchor=north] {$q$}; 
        %\filldraw[black] (3.0, 0) circle(1pt)node[anchor=east] {$\pi(q)$}; 
        \filldraw[black] (3.0, 0) circle(1pt)node[anchor=east]{$\piperp (q)$};         
        \draw[->,dotted](3.7, 0) -- (3.0, 0);
        \draw[->,dotted](3.7, 0) -- (2.9, .45)node[anchor=east]{$\pi(q)$} ;
        \filldraw[black] (2.9, .45) circle(1pt); 
        
        \draw[->,dotted](3.5, 0) -- (3.0, 0);
        \draw (0,0) ellipse (3cm and 1.5cm);
        \draw[dashed,name path=S] (0,0) ellipse (1.9cm and 1cm); 
        \draw[<->] (0,1.5) -- (0,2) node[right,midway] {$\epsilon$};
        % node[right,midway] at (2.8, 0){$x$};
        \tikzfillbetween[of=L and S, on layer=ft]{opacity=.2};
    \end{tikzpicture}
\caption{Tubular neighborhood and $\cE$-tubular retraction. The shaded region $D_{\cM}$ is a tubular neighborhood of $\cM$. The point $q\in D_{\cM}$ could be retracted to $\cM$ by the nearest point retraction $\piperp$. It may be more efficient to use another retraction $\pi$.}
\end{figure}
It contains points of a distance not exceeding $\epsilon$ from $\cM$, where $\epsilon$ is small enough so that the closest point is unique. The main existence theorem (tubular neighborhood theorem) \cite[theorem II.11.14]{Bredon2013} requires a compactness condition. However, if we allow varying $\epsilon$, this requirement can be lifted. We will use this setting. For a point $x$ in $D_{\cM}$, there is a unique closest point $\piperp(x)$ to $\cM$, and the map $x\mapsto \piperp(x)$ is an example of a {\it retraction}. A retraction \cite[Proposition 6.25]{LeeSmooth}, which we will call a {\it tubular retraction}, or $\cE$-tubular retraction is a $C^3$-map $\pi$ from $D_{\cM}\subset\cE$ to $\cM$ (the $C^3$ condition could be relaxed) satisfying\hfill\break
\indent 1. $\pi(x)\in\cM$ for $x\in D_{\cM}$,\hfill\break
\indent 2. $\pi(x) = x$ for $x\in\cM$.\hfill\break
Thus, the closest point retraction $\piperp$ is a tubular retraction. We use this general concept to allow other retractions that are easier to compute. We call a tubular retraction $\pi$ an {\it approximated nearest point retraction} (ANP-retraction) if additionally, we have for $x$ sufficiently close to $\cM$\hfill\break
\indent 3.  $|x-\pi(x) |\leq C|x-\piperp(x)|$ for a constant $C$.

From \cite[theorem II.11.14]{Bredon2013}, tubular neighborhoods\slash retractions could be defined for inclusions of the form $\cM\subset\cW\subset\hat{\cE}$ for a submanifold $\cW$ of a vector space $\hat{\cE}$. If we take $\cW=T\cM\subset\cE^2=\hat{\cE}$, (since $T\cM$ is not compact we will use the noncompact version of the cited theorem), this gives us the notion of retraction considered in \cite{AdlerShub,AbM,SHSW}, which we will call a {\it tangent retraction}. The reader can consult for example, \cite[Definition 2.1]{SHSW}. Locally, a tangent retraction maps $(x, v)\in T\cM$ to $\fR(x, v)\in\cM$, with $\fR(x, 0) = x$, with the additional condition $\frac{d}{dt}\{t\mapsto \fR(x, tv)\}_{t=0} = v$.

Both tubular and tangent retractions  are used in the numerical methods discussed in \cref{sec:projgeo}. Further related concepts will be explained there.
\begin{figure}
\centering
\begin{tikzpicture}[scale=2]\footnotesize 
\clip (-1.2,-.3) rectangle (4,1.75);
\begin{scope}[rotate=70]
\coordinate (q) at (0,0);
\draw[name path=ellps] (q) ellipse (1 and 1.2);
\path(0,-.8)--(0,.8)node[left=.8em]{$\mathcal{M}$};
\draw[name path=vertical,-{Stealth}] (1,0)node[above right=1pt] {$x$} -- (1,-.8)node[above right=1pt]{$v$};
%\draw [dashed,{-Stealth[bend]}](1,-.8) edge  [bend left=45](.5, -1.)node[below=26pt]
\draw [dashed,{-Stealth}](1,-.8) --  (.5, -1.05)node[below]{$\mathfrak{r}(x, v)$};
\draw [dashed,{-Stealth}](1,-.8) --  (.79, -.7)node[left=1pt]{$\piperp(x + v)$};
\end{scope}
\end{tikzpicture}
\caption{Tangent retraction. For small $|v|$, $(x, v)$ belongs to a tubular neighborhood of $\cM$ in $T\cM$, and can be retracted using $\piperp(x + v)$. It may be more efficient to use a different retraction $\mathfrak{r}(x, v)$.}
\end{figure}
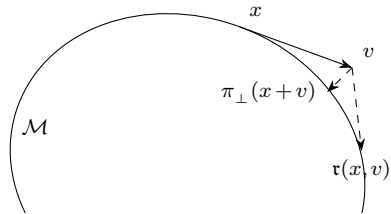
\section{Second-order tangent vectors and second-order differential operators}\label{sec:SOO}
The rank theorem \cite[theorem 4.12]{LeeSmooth} shows that locally, a smooth submanifold $\cM$ of $\cE=\R^n$ (we can replace $\cE$ with another smooth manifold) is given by $n-\dim\cM$ equations \cite[eq. 4.3]{LeeSmooth}. Thus, locally, a submanifold could be identified as a constrained manifold, given by an equation of the form $\rC(x)=0$ from $\cE$ to $\cE_L = \R^{n-\dim\cM}$, where 
the Jacobian $\rC'(x)$ is of rank $n-\dim\cM$. Tangent vectors could be understood as velocity vectors, vectors of the form $\dot{\gamma}(0)$ for a curve $\gamma$ on $\cM$. Alternatively, by differentiating the equation $\rC(\gamma(t)) = 0$, we get $\rC'(\gamma(t))\dot{\gamma}(t) = 0$. From here, the tangent space $T_x\cM$ of all tangent vectors at $x$ is defined by the equation $\rC'(x)v =0$ for $v\in T_x\cM$.

%In the exercise before proposition 2 
In \cite{Emery2007}, {\'E}mery explained that the second-order tangent space $\T_x\cM$ could be described as the space {\it spanned} by accelerations $\ddot{\gamma}(0)$ of curves starting at $x$, much like tangent vectors could be identified with velocities $\dot{\gamma}(0)$. The difference is that acceleration needs to be linked with velocity, as we can see by differentiating $\rC'(\gamma(t))\dot{\gamma}(t) = 0$
\begin{equation}\rC^{(2)}(\gamma(t); \dot{\gamma}(t), \dot{\gamma}(t)) + \rC'(\gamma(t))\ddot{\gamma}(t)=0.\label{eq:curveSecond}
\end{equation}
where $\rC^{(2)}$ is the second derivative (Hessian) of $\rC$. Thus, the equation for $\ddot{\gamma}(t)$ has a {\it quadratic term} in $\dot{\gamma}(t)$. For curves $\gamma_1,\cdots, \gamma_p$ and coefficients $c_1,\cdots c_p$, $p$ is a nonnegative integer, set $\ttA = \sum_{i=1}^pc_i\ddot{\gamma}_i(0)\in\cE$, and $\ttM := \sum_{i=1}^pc_i\dot{\gamma}_i(0)\dot{\gamma}_i^{\sfT}(0)\in \Herm{n}$ is symmetric, with entries $\ttM^{ij}$. The constraints on $\ttA$ and $\ttM$ are obtained by taking a linear combination of \cref{eq:curveSecond}
\begin{equation}
  \begin{gathered}
\sum_{ij}\rC^{(2)}_{ij}(x)\ttM^{ij} + \rC'(x)\ttA =0,\\
    \rC'(x)\ttM = 0.
  \end{gathered}  \label{eq:secondtangent}
\end{equation}
We call the pair $\ttL = (\ttA, \ttM)\in\R^n\times\Herm{n}$ a second-order tangent vector at $x$ when the constraints above are satisfied. The space of all second-order tangent vectors is identified with the subspace $\T_x\cM$ of dimension $d+\frac{d(d+1)}{2}\subset \R^n\times\Herm{n}$, defined by \cref{eq:secondtangent}. Express $\rC$ as a vector $[\rC^1,\cdots \rC^{n-d}]$, the Hessian of each $\rC^i$ is given by a matrix, and the first equation becomes
\begin{equation}\Tr((\rC^i)^{(2)}(x)\ttM) + (\rC^i)'(x)\ttA=0\quad\text{ for }i=1\cdots n-d.\label{eq:secondtangentdetail}
\end{equation}
If $\ttM = 0$, then $\rC'(x)\ttA =0$, thus, $T_x\cM\times\{0\} \subset\T_x\cM$, and $\rC'(x)\ttM=0$ shows $\ttM$ maps to $T_x\cM$, or it defines a bilinear pairing on $T_x\cM$.

If an open subset of $\cM$ is identified with an open subset of $\R^{\dim\cM}$ then this coincides with the approach in \cite{Emery2007,Emery1989}, where $\ttA$ and $\ttM$ are given by a vector $(L^i)_{i=1}^d$ and a symmetric matrix $(L^{ij})_{i,j=1}^d$.

When we take directional derivative in global coordinates, the result does not depend on the extension of $f$ to $\cE$ if the direction is tangent to $\cM$. The situation is similar for second-order tangent vectors.
\begin{lemma}Let $(A, M)$ be a second-order tangent vector at $x\in\cM$. For any $C^2$ function $f$ defined in an $\cE$-neighborhood $\Omega$ of $x$, define
\begin{equation}
  \ttL: f\mapsto (\ttL f)(x) = \egrad_f(x)^{\sfT}A + \Tr(\ehess_f(x)M)).
\end{equation}
Then $\ttL f(x) = 0$ if $f$ restricts to $\Omega\cap \cM$ is zero, thus $\ttL f$ does not depend on values of $f$ outside of $\cM$.
\end{lemma}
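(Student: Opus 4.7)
The plan is to prove the statement by showing that if $f$ vanishes on $\Omega \cap \cM$, then the pair of constraints in \cref{eq:secondtangent} forces each of the two terms in $\ttL f(x)$ to reduce to zero. The idea is to write $f$ locally as a linear combination of the constraint functions $\rC^i$ and then compute its Euclidean gradient and Hessian at $x$.

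First I would use the rank theorem: since $\rC'(x)$ has full rank $n-d$, there is an open neighborhood $U \subset \Omega$ of $x$ and a local diffeomorphism $\phi : U \to V \subset \R^n$ under which $\rC$ corresponds to projection onto the last $n-d$ coordinates. A function that vanishes on $\{\rC=0\}\cap U$ pulls back to a function on $V$ that vanishes whenever its last $n-d$ coordinates are zero, and Hadamard's lemma then gives a representation as a linear combination of those coordinates with smooth coefficients. Pulling back yields smooth functions $h_1,\dots,h_{n-d}$ on $U$ with
\begin{equation*}
f = \sum_{i=1}^{n-d} h_i\,\rC^i \quad \text{on } U.
\end{equation*}

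Next I would differentiate this representation at $x$, using $\rC^i(x)=0$. The product rule gives
\begin{equation*}
\egrad_f(x) = \sum_i h_i(x)\,\egrad_{\rC^i}(x),
\end{equation*}
and, again because $\rC^i(x)=0$,
\begin{equation*}
\ehess_f(x) = \sum_i\Bigl( h_i(x)\,\ehess_{\rC^i}(x) + \egrad_{h_i}(x)\egrad_{\rC^i}(x)^{\sfT} + \egrad_{\rC^i}(x)\egrad_{h_i}(x)^{\sfT}\Bigr).
\end{equation*}
Substituting into $\ttL f(x) = \egrad_f(x)^{\sfT} A + \Tr(\ehess_f(x) M)$ yields
\begin{equation*}
\ttL f(x) = \sum_i h_i(x)\bigl[(\rC^i)'(x)A + \Tr(\ehess_{\rC^i}(x) M)\bigr] + 2\sum_i (\rC^i)'(x)\,M\,\egrad_{h_i}(x),
\end{equation*}
where the cross-term coefficient $2$ comes from the symmetry of $M$ and the identity $\Tr(uv^{\sfT}M)=v^{\sfT}Mu$.

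Finally, each bracket in the first sum vanishes by the first equation in \cref{eq:secondtangent} (written componentwise as \cref{eq:secondtangentdetail}), and each term in the second sum vanishes because the second equation $\rC'(x)M=0$ implies $(\rC^i)'(x)M=0$ for every $i$. Hence $\ttL f(x)=0$, which also shows $\ttL f$ depends only on the restriction of $f$ to $\cM$. The main obstacle is justifying the local decomposition $f=\sum_i h_i\rC^i$ with smooth $h_i$; everything after that is a direct product-rule computation coupled to the two defining constraints of a second-order tangent vector.
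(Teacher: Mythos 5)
Your proof is correct, but it takes a genuinely different route from the paper's. The paper argues geometrically, staying close to {\'E}mery's description of $\T_x\cM$ via accelerations: it diagonalizes $M$, observes that every eigenvector with nonzero eigenvalue is tangent (from $\rC'(x)Mv=c\,\rC'(x)v$), realizes those eigenvectors as velocities of curves $\gamma_i$ on $\cM$, forms the ``acceleration-realized'' second-order tangent vector $(A_\circ,M)$ with $A_\circ=\sum_i c_i\ddot{\gamma}_i(0)$ for which $\ttL_\circ f=\sum_i c_i\,(f\circ\gamma_i)''(0)=0$ trivially, and finishes by noting $A-A_\circ$ is tangent so the leftover is a first-order derivative of a function vanishing on $\cM$. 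You instead factor $f=\sum_i h_i\,\rC^i$ via the rank theorem and Hadamard's lemma and verify directly that the two constraints in \cref{eq:secondtangent} annihilate every term: the componentwise identity \cref{eq:secondtangentdetail} kills the $h_i(x)$-weighted brackets, and $\rC'(x)M=0$ kills the cross terms $2\,(\rC^i)'(x)M\,\egrad_{h_i}(x)$. Your computation of the gradient, the Hessian, and the trace identities is correct. What each approach buys: the paper's avoids any factorization lemma but must construct curves with prescribed velocities through a chart; yours makes the role of each of the two defining constraints completely transparent and needs no eigendecomposition, at the cost of one regularity remark worth adding --- for $f\in C^2$ Hadamard's lemma only yields $h_i\in C^1$, which still suffices because the product rule for $\ehess_f(x)$ goes through termwise: $h_i\egrad_{\rC^i}$ is a product of $C^1$ maps, and $\rC^i\egrad_{h_i}$ is differentiable at $x$ with derivative $\egrad_{h_i}(x)\,\egrad_{\rC^i}(x)^{\sfT}$ precisely because $\rC^i(x)=0$, so the problematic term $\rC^i\,\ehess_{h_i}$ never has to exist.
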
  
\begin{proof}If $M=0$ then $A$ is a tangent vector and $\ttL f = \egrad_f(x)^{\sfT} A =(\rD_A f)(x)$ clearly satisfies the lemma. Consider the case $M\neq 0$. If $(c,v)$ is an eigenpair of $M$ with $c\neq 0$ then $0=\rC'(x) M v = c\rC'(x) v$, thus, $v$ is a tangent vector. Thus, if $(c_1,v_1), \cdots, (c_p, v_p)$ are nonzero eigenpairs with orthonormal eigenvectors then $M = \sum_{i=1}^p c_i v_iv_i^{\sfT}$ and $v_1,\cdots, v_p$ are tangent vectors. Consider a parametrization of $\cM$ near $x$ with a chart $\phi: U\mapsto \phi(U)\subset \cM$ for an open subset $U\subset \R^d$. Consider $p$ curves $(\alpha_1,\cdots \alpha_p)$ in $U$ such that $\alpha_i(0)=\phi^{-1}(x)$, $\phi'(\phi^{-1}(x))\dot{\alpha}_i(0) = v_i$. Then $\gamma_i(t):=\phi(\alpha_i(t))$ are curves on $\cM$ with $\gamma_i(0) = x$, $\dot{\gamma}_i(0) = v_i$, thus $\ttL_{\circ} = (A_{\circ}, M) := (\sum_i c_i\ddot{\gamma}_i(0), M)$ is a second-order tangent vector, and $\ttL_{\circ} f(x) = (\sum_i c_i\frac{d^2}{dt^2} f(\gamma_i(t)))_{t=0}=0$, while $\rC'(x)(A-A_{\circ})=0$, hence $A-A_{\circ}$ is a tangent vector. Thus, $\ttL f(x) = \ttL_{\circ} f + \rD_{A-A_{\circ}} f = 0$.
\end{proof} 
A {\it section} of $\T\cM$, or a {\it field of second-order tangent vectors} could be considered a function $\ttL$ from $\cM$ to $\R^n\times\Herm{n}$ such that $\ttL(x) = (\ttA(x), \ttM(x))\in \T_x\cM$.
A {\it second-order operator with zero constant term} (abbreviated SOO) is a section of $\T\cM$. The lemma shows SOO operates on functions on $\cM$.

Without constraint, the Laplace operator on $\R^n$ corresponds to the SOO $\ttL=(0_n, \dI_n)\in \T_x\R^n$ for $x\in\R^n$. %We will provide a formula for the Laplace-Beltrami operator as an SOO.

From \cite{Meyer,Emery2007}, generators of solutions of SDE on manifolds are SOO. There is not a unique SDE with a given generator. We call an SDE to be associated with a SOO $L$ if its generator is $\frac{1}{2}L$. In particular, the equation $dX_t = dW_t$ is associated with the Laplacian $(0, \dI_n)$ on $\R^n$.

On $\R^n$, the It{\=o} equation $dX_t = \mu(X_t, t) dt + \sigma dW_t$ is associated with $(2\mu, \sigma\sigma^{\sfT})$. For a vector field $\mu_S$, the Stratonovich equation $dX_t = \mu_S dt+ \sum_{\gamma}\sigma_{\beta}\circ dW_t^{\beta}$ corresponds to the operator $2\mu_Sf + \sum_{\beta}\rD_{\sigma_{\beta}}(\rD_{\sigma_{\beta}}f)$. We have
\begin{equation}(\ttA, \ttM) = (2\mu_S + \sum_{\beta=1}^m \rD_{\sigma_{\beta}}\sigma_{\beta}, \begin{bmatrix}\sigma_1,\cdots, \sigma_m\end{bmatrix} \begin{bmatrix}\sigma_1^{\sfT}\\\vdots\\ \sigma_m^{\sfT}\end{bmatrix}),\end{equation}
 by expanding $\rD_{\sigma_{\beta}}(\sigma_{\beta} f)$, consistent with the relationship $\mu = \mu_S +\frac{1}{2}\sum \rD_{\sigma_{\beta}}\sigma_{\beta}$ between the It{\=o} and Stratonovich drifts. If $\cM\subset \cE$ is a submanifold, with $\mu_S$ and $\sigma_i$'s are vector fields on $\cM$, this is an SOO, as we can verify 
$$(\rC^i)'(2\mu_S + \sum_{\beta=1}^m \rD_{\sigma_{\beta}}\sigma_{\beta}) + \Tr(\rC^i)^{(2)}(\sum_{\beta=1}^m\sigma_{\beta}\sigma_{\beta}^{\sfT} )=0,$$
which follows by differentiating $(\rC^i)'\sigma_{\beta} = 0$ in the tangent direction $\sigma_{\beta}$, and the fact that $\mu_S$ is tangent in the Stratonovich case.

Recall an invariant manifold of a SDE on $\R^n$ is a manifold such that a process starting at a point $X_0$ on $\cM$ will be on $\cM$ a.e. Below, we use the symbol $\sigmam$ instead of just $\sigma$ to signify the fact that the process has an invariant manifold.
\begin{proposition}The It{\=o} process $dX_t =\mu(X_t, t) dt + \sigmam(X_t, t) dW_t$ defined on $\R^n$ near $\cM$, where $\sigmam^{\sfT}\sigmam$ is invertible has $\cM$ as an invariant manifold if and only if $(\ttA, \ttM) = (2\mu, \sigmam\sigmam^{\sfT})$ restricts to a SOO on $\cM$.\label{prop:conservedIto}
\end{proposition}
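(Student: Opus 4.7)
The plan is to translate the invariant-manifold condition into pointwise conditions on the local defining functions $\rC^i$ via It\^o's formula, and then read those off as exactly the SOO constraints \eqref{eq:secondtangent}. Concretely, given $x \in \cM$, pick a local defining submersion $\rC$ with $\rC^{-1}(0) = \cM$ near $x$. For the process $X_t$ started at $X_0 \in \cM$, invariance is equivalent to $\rC(X_t) \equiv 0$ as long as $X_t$ stays in the coordinate patch, so It\^o's formula applied to $\rC^i$ and to $\psi := \tfrac{1}{2}|\rC|^2$ will give the two directions.

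For the forward direction, It\^o's formula yields
\[
d\rC^i(X_t) = \bigl[(\rC^i)'\mu + \tfrac{1}{2}\Tr((\rC^i)^{(2)}\sigmam\sigmam^{\sfT})\bigr](X_t)\,dt + (\rC^i)'(X_t)\sigmam(X_t)\,dW_t.
\]
If $\cM$ is invariant, then $\rC^i(X_t) \equiv 0$ is a continuous semimartingale with zero finite-variation and zero quadratic variation parts on $\{X_t\in\cM\}$, which (by a stopping-time argument at the exit of the patch) forces $(\rC^i)'(x)\sigmam(x)=0$ and $2(\rC^i)'(x)\mu(x) + \Tr((\rC^i)^{(2)}(x)\sigmam\sigmam^{\sfT})=0$ at every $x\in\cM$. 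These are precisely the two lines of \eqref{eq:secondtangentdetail} with $\ttA = 2\mu$ and $\ttM = \sigmam\sigmam^{\sfT}$, so $(2\mu,\sigmam\sigmam^{\sfT})$ restricts to an SOO.

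For the reverse direction, assume $(2\mu,\sigmam\sigmam^{\sfT})$ is an SOO on $\cM$. The identity $\rC'\ttM = \rC'\sigmam\sigmam^{\sfT}=0$ together with invertibility of $\sigmam^{\sfT}\sigmam$ (multiply each row of $\rC'\sigmam$ on the right by $\sigmam^{\sfT}$ and use the invertible Gram matrix) upgrades to $\rC'\sigmam = 0$ on $\cM$. Apply It\^o's formula to $\psi(X_t)$; after expanding $\psi' = \sum_i \rC^i(\rC^i)'$ and $\psi^{(2)} = \sum_i ((\rC^i)')^{\sfT}(\rC^i)' + \sum_i\rC^i(\rC^i)^{(2)}$, the drift of $\psi(X_t)$ is
\[
\sum_i \rC^i(X_t)\,\bigl[(\rC^i)'\mu + \tfrac{1}{2}\Tr((\rC^i)^{(2)}\sigmam\sigmam^{\sfT})\bigr](X_t) + \tfrac{1}{2}\sum_i \bigl|(\rC^i)'(X_t)\sigmam(X_t)\bigr|^2.
\]
By the SOO hypothesis, each of the bracketed expressions and each $(\rC^i)'\sigmam$ vanishes on $\cM$; by smoothness and Taylor's theorem they are bounded by $C|\rC|$ in a tubular neighborhood, so the drift is controlled by $C'\psi(X_t)$ and the quadratic variation of the martingale part is also controlled by $C''\psi(X_t)\,dt$. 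Taking expectations up to a stopping time $\tau$ where $X_t$ exits the patch and applying Gronwall's inequality gives $\E[\psi(X_{t\wedge\tau})]=0$, hence $X_{t\wedge\tau}\in\cM$ a.s.; patching finitely many local submersions and letting $\tau\to\infty$ yields global invariance.

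The main obstacle is the technical bookkeeping in the reverse direction: $\rC$ is only defined locally, so one must localize by stopping times and cover $\cM$ by coordinate patches, and one must justify the Taylor bounds on $(\rC^i)'\sigmam$ and on $(\rC^i)'\mu+\tfrac{1}{2}\Tr((\rC^i)^{(2)}\sigmam\sigmam^{\sfT})$ \emph{uniformly} on a neighborhood of each patch (this uses smoothness of $\mu,\sigmam,\rC$ and compactness after localization). The forward direction is essentially mechanical once one accepts that an identically zero continuous semimartingale has both its finite-variation and quadratic-variation parts identically zero.
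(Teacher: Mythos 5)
Your proof is correct, and its skeleton — apply It\^o's formula to the local defining functions $\rC^i$ and read off the drift and martingale parts — is the same as the paper's. The one place where you genuinely diverge is the direction ``SOO $\Rightarrow$ invariance.'' The paper argues directly that the drift of $\rC^i(X_t)$ and the coefficient $(\rC^i)'\sigmam$ vanish, hence $d\rC^i(X_t)=0$; but those coefficients are only known to vanish \emph{on} $\cM$, so as written that step quietly presupposes $X_t\in\cM$, which is what is being proved. Your substitute — running It\^o on $\psi=\tfrac{1}{2}|\rC|^2$, bounding the drift by $C\psi$ via Hadamard-type estimates (smooth functions vanishing on the zero set of a submersion are $O(|\rC|)$ locally), and closing with Gronwall and a stopping-time localization — removes that circularity and is the standard rigorous repair; it costs you the extra bookkeeping you flag (patch-by-patch localization and uniform Taylor bounds), but it buys a complete argument. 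Your other direction (invariance $\Rightarrow$ SOO, via vanishing of the finite-variation and quadratic-variation parts of the identically zero semimartingale $\rC^i(X_t)$) and the linear-algebra step upgrading $\rC'\sigmam\sigmam^{\sfT}=0$ to $\rC'\sigmam=0$ using invertibility of $\sigmam^{\sfT}\sigmam$ coincide with the paper's.
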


\begin{proof} Using It{\=o}'s lemma (on $\R^n$), we get for $i=1\cdots n-\dim\cM$
  $$d\rC^i(X_t) = ((\rC^i)'\mu +\frac{1}{2}\Tr(\rC^i)^{(2)}\sigmam\sigmam^{\sfT})dt + (\rC^i)'\sigmam dW_t $$
where we omit the variable names on the right-hand side for brevity. Thus, if $(2\mu, \sigmam\sigmam^{\sfT})$ is an SOO then the first term is zero, while the condition $(\rC^i)'M=(\rC^i)'\sigmam\sigmam^{\sfT} = 0$ implies $(\rC^i)'\sigmam\sigmam^{\sfT} \sigmam =0$, together with $\sigmam^{\sfT}\sigmam$ is invertible means the second term is zero, hence $d\rC^i(X_t)=0$. Since $\rC^i(X_0)=0$, this implies $\rC^i(X_t)$ 
is zero a.e. Conversely, if $d\rC^i=0$ then both the drift and $(\rC^i)'\sigmam=0$, implying \cref{eq:secondtangent}.
\end{proof}  
\section{The Laplace-Beltrami operator as SOO and Riemannian Brownian motions}\label{sec:laplaceBeltrami}
We now give an explicit expression for the Laplace-Beltrami operator in global coordinates in terms of the familiar operators $\Pi, \sfg^{-1}$ and a Christoffel function $\Gamma$. In the next theorem, $\Pi\sigma$ corresponds to $\sigmam$ in \cref{prop:conservedIto}.
\begin{theorem}\label{theo:laplace} Let $\{e_i\}_{i=1}^n$ be the standard orthonormal basis of $\R^n$. For a Whitney-embedded manifold $\cM\subset\cE=\R^n$ with the Riemannian metric given by the operator $\sfg:\cM\mapsto \Herm{\cE}$ having the compatible projection $\Pi$ and a Christoffel function $\Gamma$, the Laplace-Beltrami operator on $\cM$  is given by
  \begin{equation}\Delta_f = \egrad_f^{\sfT}(-\sum_{i=1}^n\Gamma(e_i, \Pi\sfg^{-1}e_i)) +\Tr_{\cE}\Pi\sfg^{-1}\ehess_f\label{eq:embeddedLaplace}
  \end{equation}
  with both sides evaluated at $x\in\cM$. Here, we consider $\Pi\sfg^{-1}\ehess_f$ as an operator on $\cE$ when taking the trace $\Tr_{\cE}\Pi\sfg^{-1}\ehess_f=\sum e_i^{\sfT}\Pi\sfg^{-1}\ehess_fe_i$. In other words, the SOO is given by
  \begin{equation}
    \ttL = (\ttA, \ttM) = (-\sum_{i=1}^n\Gamma(e_i, \Pi\sfg^{-1}e_i), \Pi\sfg^{-1}).
  \end{equation}
 We have $\sum_{i=1}^n\Gamma(e_i, \Pi\sfg^{-1}e_i)) = \sum_{i=1}^n\Gamma(\Pi e_i, \Pi\sfg^{-1}e_i)$.
  
 Assume $\sigma$ is a smooth map defined in a tubular neighborhood $\cU$ of $\cM$, with value $\sigma(x)$ in the space of linear operators on $\cE$, such that $\Pi\sigma(x)\sigma(x)^{\sfT}\sfg\Pi=\Pi$. Let $W_t$ be a Wiener process on $\cE$. The It{\=o} process on $\cE$
  \begin{equation}
    dX_t =  -\frac{1}{2}\sum_{i=1}^n\Gamma(X_t; e_i, (\Pi\sfg^{-1})(X_t)e_i) dt + (\Pi\sigma)(X_t) dW_t
    \label{eq:BrownianIto}
\end{equation}    
has $\cM$ as an invariant manifold, and $X_t$ is the Riemannian Brownian motion on $\cM$ with metric induced by $\sfg$. 

For $\omega\in\cE$, let $\Pi \omega$ denote the vector field $y\mapsto \Pi(y)\omega$ on $\cM$, then the Stratonovich equation below also describes a Riemannian Brownian motion with the metric $\sfg$ on $\cM$. 
  \begin{equation}dX_t = -\frac{1}{2}\sum_{i=1}^n(\nabla_{\Pi\sigma e_i}\Pi\sigma e_i)_{X_t} + (\Pi\sigma)_{X_t}\circ dW_t.
\label{eq:BrownianStratAlt}    
  \end{equation}
Alternatively, assuming the stronger condition $\Pi\sigma\sigma^{\sfT} = \Pi\sfg^{-1}$, then the following  Stratonovich SDE also defines a Brownian motion on $\cM$
  \begin{equation}
    \begin{gathered}  dX_t 
       = -\frac{1}{2}\sum_{i=1}^n\Pi_{_{X_t}}\Gamma_{_{X_t}}(e_i, (\Pi\sfg^{-1})_{_{X_t}}e_i) + \Pi_{_{X_t}}\circ (\sigma_{_{X_t}} dW_t).
      \label{eq:BrownianStrat}
\end{gathered}      
  \end{equation}
\end{theorem}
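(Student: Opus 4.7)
The plan is to first derive the global-coordinate formula \cref{eq:embeddedLaplace}, then read off the associated SOO, and finally verify that each of the three candidate SDEs has this SOO as its (twice) generator.

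First, I start from the local coordinate formula \cref{eq:laplacelocal}, whose two summands are both of the form $\sum_{j=1}^d \rB(V_j, V^j)$ over dual bases of $T_x\cM$ under $\sfg$: once with the symmetric $\R$-valued form $\rB_1(v,w) = v^{\sfT}\ehess_f w$, and once with the $\cE$-valued form $\rB_2(v,w) = \egrad_f^{\sfT}\Gamma(v,w)$. Applying \cref{eq:Bsum} of \cref{lem:projection} to each converts the sum into one over the standard basis of $\cE$ weighted by $\Pi\sfg^{-1}$, producing $\Tr_{\cE}(\Pi\sfg^{-1}\ehess_f)$ and $\egrad_f^{\sfT}\sum_i\Gamma(e_i,\Pi\sfg^{-1}e_i)$, i.e., \cref{eq:embeddedLaplace}. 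The identity $\sum_i\Gamma(e_i,\Pi\sfg^{-1}e_i) = \sum_i\Gamma(\Pi e_i,\Pi\sfg^{-1}e_i)$ also follows from \cref{lem:projection} applied to the bilinear form $\tilde{\rB}(v,w) := \Gamma(\Pi v, w)$, since both sides reduce to $\sum_j\Gamma(v_j,v^j)$ on a dual basis $\{v_j\},\{v^j\}$ of $T_x\cM$ (using $\Pi v_j = v_j$). Reading off the coefficients of $\egrad_f$ and $\ehess_f$ in \cref{eq:embeddedLaplace} yields the SOO $\ttL = \bigl(-\sum_i\Gamma(e_i,\Pi\sfg^{-1}e_i),\,\Pi\sfg^{-1}\bigr)$; its constraints \cref{eq:secondtangent} are automatic because $\Delta$ is intrinsic to $\cM$.

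For the It\={o} equation \cref{eq:BrownianIto}, the symmetry of $\Pi\sfg^{-1}$ established inside the proof of \cref{lem:projection} is equivalent to $\sfg\Pi = \Pi^{\sfT}\sfg$, so the hypothesis $\Pi\sigma\sigma^{\sfT}\sfg\Pi = \Pi$ rewrites as $(\Pi\sigma)(\Pi\sigma)^{\sfT} = \Pi\sigma\sigma^{\sfT}\Pi^{\sfT} = \Pi\sfg^{-1}$. The SOO associated to the SDE is therefore exactly $\ttL$, so \cref{prop:conservedIto} gives invariance of $\cM$, and since the generator on $\cM$-functions is $\tfrac12\Delta$, $X_t$ is the Riemannian Brownian motion on $\cM$.

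For the first Stratonovich form \cref{eq:BrownianStratAlt}, I apply the Stratonovich-to-It\={o} correction $\mu_I = \mu_S + \tfrac12\sum_i\rD_{\sigma_i}\sigma_i$ with $\sigma_i := \Pi\sigma e_i$, and use $\nabla_{\sigma_i}\sigma_i = \rD_{\sigma_i}\sigma_i + \Gamma(\sigma_i,\sigma_i)$ to collapse the drift to $-\tfrac12\sum_i\Gamma(\Pi\sigma e_i,\Pi\sigma e_i)$. Writing $\Gamma$ in the tensor form \cref{eq:gammacomp} and cycling the trace, $\sum_i\Gamma(\Pi\sigma e_i,\Pi\sigma e_i) = \sum_l f_l\Tr_{\cE}(J_l\,\Pi\sigma\sigma^{\sfT}\Pi^{\sfT}) = \sum_l f_l\Tr_{\cE}(J_l\,\Pi\sfg^{-1}) = \sum_i\Gamma(e_i,\Pi\sfg^{-1}e_i)$, so \cref{eq:BrownianStratAlt} reduces in It\={o} form to \cref{eq:BrownianIto}. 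For \cref{eq:BrownianStrat}, the same conversion produces the It\={o} drift $-\tfrac12\sum_i\Pi\Gamma(e_i,\Pi\sfg^{-1}e_i) + \tfrac12\sum_i\rD_{\tau_i}\tau_i$ with $\tau_i := \Pi\sigma e_i$; matching against \cref{eq:BrownianIto} reduces to the identity $\sum_i\rD_{\tau_i}\tau_i = -\sum_i(I-\Pi)\Gamma(e_i,\Pi\sfg^{-1}e_i)$ on $\cM$. This cancellation, where the stronger hypothesis $\Pi\sigma\sigma^{\sfT} = \Pi\sfg^{-1}$ finally enters, is the main technical step: from \cref{eq:GammaLeviCivita} the $\Pi\sfg^{-1}$-prefixed piece of $\Gamma$ vanishes under $(I-\Pi)$, so the right-hand side reduces to $\sum_i(I-\Pi)(\rD_{e_i}\Pi)\Pi\sfg^{-1}e_i$; differentiating the tensor identity $\sum_i\tau_i\tau_i^{\sfT} = \Pi\sfg^{-1}$ in directions $\tau_j$, combined with $\Pi^2=\Pi$ on $\cM$ and the Leibniz rule, should identify this with $\sum_i\rD_{\tau_i}\tau_i$ and close the proof.
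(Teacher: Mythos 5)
Your derivation of \cref{eq:embeddedLaplace}, the reading-off of the SOO, the It{\=o} case, and the first Stratonovich form \cref{eq:BrownianStratAlt} all match the paper's argument (your route to $\sum_i\Gamma(e_i,\Pi\sfg^{-1}e_i)=\sum_i\Gamma(\Pi e_i,\Pi\sfg^{-1}e_i)$ via \cref{lem:projection} applied to $(v,w)\mapsto\Gamma(\Pi v,w)$ is a clean alternative to the paper's direct use of \cref{eq:traceManipulation}). The problem is in your treatment of \cref{eq:BrownianStrat}, and it is a genuine gap, not a missing computation.

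First, you apply the Stratonovich--It{\=o} correction as if the diffusion term were $\sum_i(\Pi\sigma e_i)\circ dW^i_t$, i.e.\ you differentiate the fields $\tau_i=\Pi\sigma e_i$. But the notation $\Pi_{X_t}\circ(\sigma_{X_t}dW_t)$ in \cref{eq:BrownianStrat} deliberately keeps $\sigma$ outside the Stratonovich product: only the fields $y\mapsto\Pi(y)e_\beta$ are differentiated, and $\sigma$ enters solely through the quadratic covariation $[(\sigma dW)^\alpha,(\sigma dW)^\beta]=(\sigma\sigma^{\sfT})^{\alpha\beta}dt$. The correct correction is therefore $\tfrac12(\rD_{\Pi\sigma\sigma^{\sfT}e_\beta}\Pi)e_\beta=\tfrac12(\rD_{\Pi\sfg^{-1}e_\beta}\Pi)e_\beta$ (summed over $\beta$), which depends on $\sigma$ only through $\sigma\sigma^{\sfT}$. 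This is precisely why \cref{eq:BrownianStratAlt} and \cref{eq:BrownianStrat} are two different equations with different drifts; under your reading they would have the same diffusion term and the distinction would collapse.

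Second, the identity you reduce to, $\sum_i\rD_{\tau_i}\tau_i=-\sum_i(\dI_{\cE}-\Pi)\Gamma(e_i,\Pi\sfg^{-1}e_i)$, is false in general. Its normal component is correct (it is equivalent to $\sum_i\nabla_{\tau_i}\tau_i$ being tangent), but its tangential component asserts $\Pi\sum_i\rD_{\tau_i}\tau_i=0$, equivalently $\sum_i\nabla_{\tau_i}\tau_i=\Pi\sum_i\Gamma(e_i,\Pi\sfg^{-1}e_i)$. The left side contains $\sum_i\Pi(\rD_{\tau_i}\sigma)e_i$ and so depends on the choice of $\sigma$ beyond $\sigma\sigma^{\sfT}$ (replace $\sigma$ by $\sigma O(y)$ for a non-constant orthogonal-valued $O$ preserving $\Pi\sigma\sigma^{\sfT}=\Pi\sfg^{-1}$), while the right side does not; so no differentiation of the tensor identity $\sum_i\tau_i\tau_i^{\sfT}=\Pi\sfg^{-1}$ can produce it, and your closing ``should'' cannot be made to work. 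The paper instead closes this case with the pointwise identity $(\dI_{\cE}-\Pi)\Gamma(v_1,v_2)=-(\rD_{v_1}\Pi)v_2$ for \emph{tangent} $v_1,v_2$, obtained from the tangency of $\nabla_{\Pi v_1}\Pi v_2$ together with $\Pi(\rD_{v}\Pi)\Pi=0$ (differentiate $\Pi^2=\Pi$); combining this with the correction term $\tfrac12(\rD_{\Pi\sfg^{-1}e_\beta}\Pi)e_\beta$ and \cref{eq:traceManipulation} yields the It{\=o} drift $-\tfrac12\sum_i\Gamma(e_i,\Pi\sfg^{-1}e_i)$. You should redo this case along those lines.
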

In particular, if $\sfg$ is positive-definite on $\cE$, we can take $\sigma = \sfg^{-\frac{1}{2}}$, the symmetric positive-definite square root of $\sfg$. 
 Also, for the embedded metric $\sfg = \dI_{\cE}$, and $\sigma =\dI_{\cE}$, \cref{eq:BrownianStratAlt} reduces to \cite[equation 3.2.6]{Hsu}
  \begin{equation}dX_t = \Pi_{X_t}\circ dW_t.
\label{eq:BrownianHsu}    
  \end{equation}  
 
 Let $\tGamma$ be the Levi-Civita connection of an extension of $\sfg$ to $\cU$, we also have $\sum_{i=1}^n\Pi(X_t)\tGamma(e_i, \Pi\sfg^{-1}e_i) = \sum_{i=1}^n\Pi(X_t)\Gamma(e_i, \Pi\sfg^{-1}e_i)$, giving us another way to compute the Stratonovich drift in \cref{eq:BrownianStrat}.
 
 \begin{proof} Using the last expression for $\Delta_f$ in \cref{eq:laplacelocal}, 
    $\Delta_f =\sum_{i=1}^{\dim \cM} \ehess_f^{02}(V_i, V^i) -\egrad_f^{\sfT}(\sum_{i=1}^{\dim\cM}\Gamma( V_i, V^i))$, for dual bases $\{V_i\}$ and $\{V^i\}$ of the tangent space near a point $x\in \cM$. Since $\ehess_f^{02}(., .)$ and $\egrad_f\Gamma(., .)$ are both bilinear,
    we can apply \cref{eq:Bsum}, replacing the sum over the dual bases with the sum over $e_i, \Pi\sfg^{-1}e_i$. \emph{We will use Einstein's convention} for the rest of the proof
    $$\Delta_f =\ehess^{02}_f(e_i, \Pi\sfg^{-1}e_i) - \egrad_f^{\sfT}\Gamma(e_i, \Pi\sfg^{-1}e_i),
    $$
    we get \cref{eq:embeddedLaplace}. Note, this reduces to the local coordinate formula $\Delta_f  =  g^{ij}\partial_i\partial_jf-g^{jk}\Gamma^i_{jk}\partial_if$ on an open subset of $\cE$.
 
 Since $\Pi^2=\Pi$ and $\Pi\sfg^{-1}$ is symmetric, $\sfg^{-1}\Pi^{\sfT} = \Pi\sfg^{-1}$. Using \cref{eq:traceManipulation},
 $$\Gamma(\Pi e_i,\Pi\sfg^{-1}e_i)=\Gamma( e_i,\Pi\sfg^{-1}\Pi^{\sfT}e_i)
 =\Gamma( e_i,\Pi^2\sfg^{-1}e_i)=\Gamma( e_i,\Pi\sfg^{-1}e_i).
 $$
   To show $\ttL$ is an SOO on $\cM$, take $f=\rC^j, j=1\cdots n-\dim\cM$. Since $\rC^j$ is zero on $\cM$, its Riemannian gradient is identically zero, thus, $\Delta_{\rC^j}=\langle V^i, \nabla_{V_i}\rgrad_{\rC^j}\rangle_{\sfg}=0$ in \cref{eq:laplacelocal}. But it is also given by \cref{eq:embeddedLaplace} as 
   $$\Delta_{\rC^j}=(\rC^j)' (-\Gamma(e_i, \Pi\sfg^{-1}e_i)) +\Tr_{\cE}\Pi\sfg^{-1}(\rC^j)^{(2)}
   $$
   which must also be zero. This gives us \cref{eq:secondtangentdetail} in the SOO condition. The other condition $\rC'\Pi\sfg^{-1}=0$ follows as $\Pi$ projects to the tangent space. 
For the It{\=o} equation, the generator of \cref{eq:BrownianIto} for a smooth function $f$ is 
$$\ttL f =-\frac{1}{2}\egrad_f^{\sfT}\Gamma(e_i, \Pi\sfg^{-1}e_i) +\frac{1}{2}\Tr(\Pi\sigma\sigma^{\sfT}\Pi^{\sfT}\ehess_f) .$$
Again $\sfg^{-1}\Pi^{\sfT} = \Pi\sfg^{-1}$ implies $\Pi^{\sfT} = \sfg\Pi\sfg^{-1}$, thus, the last term reduces to
$\frac{1}{2}\Tr(\Pi\sigma\sigma^{\sfT}\sfg\Pi\sfg^{-1}\ehess_f)=\frac{1}{2}\Tr(\Pi\sfg^{-1}\ehess_f).
$
Hence, $\ttL f = \frac{1}{2}\Delta_f$.

If $\{ e_i\}_{i=1}^n$ is the standard basis, then the $i$-th column of $\Pi$ is $\Pi e_{i}$.   As an equation in $\R^n$ and still with Einstein's summation convention, the It{\=o} form of (\ref{eq:BrownianStratAlt}) is exactly \cref{eq:BrownianIto}, since
$$\begin{gathered}-\frac{1}{2}(\nabla_{\Pi\sigma e_i}\Pi\sigma e_i)_{X_t} 
+\frac{1}{2}(\rD_{\Pi\sigma e_i}\Pi\sigma e_i)_{X_t} +
(\Pi\sigma)_{X_t} dW_t\\
=-\frac{1}{2}\Gamma(\Pi\sigma e_i, \Pi\sigma e_i)_{X_t} +
\Pi(X_t)\sigma(X_t) dW_t.
\end{gathered}$$
%and we replace $\Pi\sigma\sigma^{\sfT}\Pi^{\sfT}=\Pi\sigma\sigma^{\sfT}\sfg\Pi\sfg^{-1} = \Pi\sfg^{-1}$, thus $\ttL = \frac{1}{2}\Delta_f$.
The It{\=o} form of equation (\ref{eq:BrownianStrat}) is $dX_t = \mu dt + \Pi\sigma dW$, with
$$\begin{gathered}
  \mu dt = -\frac{1}{2}\Pi\Gamma( e_i, \Pi\sfg^{-1}e_i)dt + \frac{1}{2}\rD_{\Pi e_{\alpha}}(\Pi e_{\beta})[(\sigma dW)^{\alpha}, (\sigma dW)^{\beta}]\\
  = -\frac{1}{2}\Pi\Gamma(e_i, \Pi\sfg^{-1}e_i)dt
  + \frac{1}{2}\rD_{(\sigma\sigma^{\sfT})^{\alpha\beta}\Pi e_{\alpha}}(\Pi e_{\beta})dt\\
=   -\frac{1}{2}\Pi\Gamma(\Pi e_i, \Pi\sfg^{-1}e_i)dt
+ \frac{1}{2}\rD_{\Pi\sfg^{-1}e_{\beta}} (\Pi e_{\beta})dt
\end{gathered}
$$
where we use $(\sigma\sigma^{\sfT})^{\alpha\beta}\Pi e_{\alpha} = \Pi\sigma\sigma^{\sfT} e_{\beta}
=\Pi\sfg^{-1}e_{\beta}.$ From here, at $x\in\cM$, since for two tangent vectors $v_1, v_2$, $\nabla_{\Pi v_1}\Pi v_2 \in T_x\cM$, thus $\nabla_{\Pi v_1}\Pi v_2 = \Pi\nabla_{\Pi v_1}\Pi v_2 $
$$\begin{gathered} (\rD_{v_1}\Pi)v_2 + \Gamma(v_1,  v_2) = \Pi(\rD_{v_1}\Pi)v_2 + \Pi\Gamma(v_1,  v_2),\\
\Gamma(v_1,  v_2) = \Pi(\rD_{v_1}\Pi)v_2 - (\rD_{v_1}\Pi)v_2  + \Pi\Gamma(v_1,  v_2).
\end{gathered}$$
But $\rD_{v_1}\Pi v_2 = \rD_{v_1}\Pi^2 v_2 = (\rD_{v_1}\Pi)\Pi v_2 + \Pi\rD_{v_1}(\Pi v_2) =(\rD_{v_1}\Pi) v_2 + \Pi\rD_{v_1}\Pi v_2 $, we have $ \Pi\rD_{v_1}\Pi v_2=0$, hence
$\Gamma(v_1,  v_2) = - (\rD_{v_1}\Pi)v_2  + \Pi\Gamma(v_1,  v_2)$, then
the above reduces to (with the help of \cref{eq:traceManipulation})
%apply $\Pi$ and the Weingarten's lemma $\Pi(\rD_{v_1}\Pi)v_2=0$,
%$$\Pi\Gamma(v_1,  v_2) = \Pi\nabla_{\Pi v_1}\Pi v_2 = \nabla_{\Pi v_1}\Pi v_2 = (\rD_{v_1}\Pi)v_2 + \Gamma(v_1,  v_2) $$
%thus $\Gamma(v_1,  v_2) = \Pi\Gamma(v_1,  v_2) -(\rD_{v_1}\Pi)v_2$, hence
$$\mu=-\frac{1}{2}\Gamma(\Pi\sfg^{-1} e_i, \Pi e_i) = -\frac{1}{2}\Gamma(\Pi\sfg^{-1} e_i, e_i) = -\frac{1}{2}\Gamma(e_i, \Pi\sfg^{-1} e_i).$$
%, $\Gamma(\Pi\sigma e_i, \Pi\sigma e_i) = \Gamma(e_i, \Pi\sigma\sigma^{\sfT}\Pi^{\sfT} e_i)=\Gamma(e_i, \Pi\sfg^{-1}e_i)$.

Finally, for the embedded metric $\sigma(x) = \dI_{\cE}$, we have $\Gamma(\ttX, \ttY) = -(\rD_{\ttX} \Pi)\ttY$ from \cite{AMT},\cref{eq:GammaLeviCivita}, thus
$$\nabla_{\Pi e_i}\Pi e_i = \rD_{\Pi e_i}\Pi e_i - (\rD_{\Pi e_i} \Pi)\Pi e_i = 0,
$$
as \cref{eq:traceManipulation} gives $(\rD_{\Pi e_i} \Pi)\Pi e_i = (\rD_{\Pi^2 e_i} \Pi) e_i$. This gives us \cref{eq:BrownianHsu}.
\end{proof}  
\begin{example}Consider the unit sphere $\rS^{n-1}$ with the induced constant metric $\sfg = \dI_n$. It is well-known $\Pi(x) = \dI_n - xx^{\sfT}, \Gamma(x; \xi, \eta) = x\xi^{\sfT}\eta$, thus
  $$\sum_i\Gamma( e_i, \Pi \sfg^{-1} e_i) = \sum_i x(e_i^{\sfT}\Pi e_i) = x\Tr\Pi= (n  - \Tr x^{\sfT}x)x =  (n-1)x.$$
  Hence, the Laplacian is $\Delta_f = \Tr(\ehess_f\Pi) - (n-1)\egrad_f^{\sfT}x$, and the Brownian motion could be given as the solution \cite[chapter 3, p.83]{Hsu} of
  \begin{equation}
    dX_t = -\frac{n-1}{2}X_t dt + (\dI_n - X_tX_t^{\sfT})dW_t.
  \end{equation}
  We have $x^{\sfT}\ttM = x^{\sfT}\Pi = 0$ and with $\rC'(x) = 2x^{\sfT}, \rC^{(2)}(x) = 2\dI_n$,   
  $$2x^{\sfT}(-(n-1)x) + \Tr(2\dI_n(\dI_n - xx^{\sfT}))=0,$$ 
confirming this Laplacian is a second-order differential operator on the sphere.
\end{example}
\begin{example} In the upper-half plane model $\rH^{n}$ of hyperbolic geometry with coordinate $(x_1,\cdots, x_n)^{\sfT}, x_n > 0$, with $\sfg=\frac{1}{x_{n}^2}\dI_{n}$, $\Pi=\dI_n$ is the identity map, and $\Gamma(x; \xi, \eta) = -\frac{1}{x_n}(\xi_n\eta + \eta_n\xi -\xi^{\sfT}\eta e_n)$, where $\xi_n, \eta_n$ are the $n$-th coordinate of $\xi,\eta\in \R^n$ and $e_n=(0,\cdots,0,1)^{\sfT}$. Hence, $\sum_i\Gamma(x; e_i, x_n^2e_i) = (n-2)x_ne_n$, which gives $\Delta_f =x_n^2\sum_{i=1}^n \frac{\partial^2}{\partial x_i^2} -(n-2)x_n\frac{\partial}{\partial x_n}$ and the Brownian SDE
  \begin{equation}
    dX_t = -\frac{n-2}{2}X_{n,t}e_n + X_{n,t} \dI_n dW_t.
\end{equation}    
\end{example}
\subsection{Lifting of Brownian motion}\label{sec:hBrown}
Consider an embedded manifold $\cM\subset\cE$, and assume we have a {\it Riemannian submersion } $\fq:\cM\to\cB$ (see \cite[chapter 7]{ONeil1983} for background material). For $x\in\cM$, we have a submanifold $\fq^{-1}(\fq(x))$, whose tangent space is a subspace $\cV_x \subset T_x\cM$, called the vertical bundle. Its orthogonal complement $\cH_x$ is called the horizontal space, the corresponding subbundle is called the horizontal bundle, and the projection from $\cE$ to $\cH_x$ is called the {\it horizontal projection.} The corresponding vector fields are called horizontal. In a Riemannian submersion, the map $\fq'(x)$ is an isometry from the horizontal space to $T_{\fq(x)}\cB$, thus, any vector $w\in T_{\fq(x)}\cB$ corresponds to a unique horizontal vector at $x\in\cM$, called the {\it horizontal lift}. We can lift Riemannian Brownian processes. From the proof of theorem 4.1.10 in \cite{Baudoin}, the lift satisfies
\begin{equation}dX_t = \sum_i H_i\circ dW^i_t -\frac{1}{2}\sum_i(\nabla^{\cH}_{H_i}H_i) dt\label{eq:lifted}
\end{equation}
for basic horizontal orthonormal vector fields $H_i$ (see \cite[Definition 4.1.7]{Baudoin} for definition), where the lifted connection $\nabla^{\cH}$ is defined on the horizontal bundle $\cH$, and for a tangent vector field $\ttX$ and a horizontal vector field $\ttY$ \cite[lemma 7.45]{ONeil1983} $\nabla^{\cH}_{\ttX}\ttY = \rH\nabla_{\ttX}\ttY$, where $\rH$ is the horizontal projection, and $\nabla$ is the Levi-Civita connection on $\cM$. %Note $\nabla^{\cH}_{\ttX}\ttY$, as a connection on the horizontal bundle $\cH$ is defined for tangent $\ttX$ (including vertical vectors). 
We can write
\begin{equation}
\nabla^{\cH}_{\ttX}\ttY = \rD_{\ttX}\ttY + \Gamma^{\cH}(\ttX, \ttY)
\end{equation}
for horizontal vector fields $\ttX, \ttY$, where $\Gamma^{\cH}$ (depending on $x$) is bilinear in $\ttX$ and $\ttY$, valued in $\cE$. The vector field $\nabla^{\cH}_{\ttX}\ttY$ is horizontal.

For a submanifold $\cN\subset\cM$, let $\Pi_{\cN}$ be the projection from $\cE$ to $T\cN$. For
two vector fields $\ttX,\ttY$ on $\cN$, $\Two(\ttX, \ttY) = (\Pi_{\cM} - \Pi_{\cN})\nabla_{\ttX}\ttY$ is called the second fundamental form. It is a tensor, whose trace is called the mean curvature
\begin{equation}
\mathbb{H} = (\Pi_{\cM} - \Pi_{\cN})\sum_{j=1}^{\dim\cN}\nabla_{\ttZ_j}\ttZ_j = \sum_{i=1}^n\Two(e_i, \Pi_{\cN}\sfg^{-1}e_i)
\end{equation}  
for a locally orthonormal frame $\{\ttZ_i\}$ of $\cN$, using \cref{eq:Bsum}, if we extend $\Two$ bilinearly to $\cE$.

For a Riemannian submersion $\fq$, the horizontal Laplacian \cite{Baudoin} is
\begin{equation}\Delta^{\cH}_f = \sum_i\rD_{H_i}\rD_{H_i}f -\sum_i\rD_{\ttH\nabla_{H_i}H_i}f - \sum_j\rD_{\ttH\nabla_{Z_j}Z_j}f
\end{equation}
at $x\in\cM$, for an orthonormal frame $\{H_i\}$ of the horizontal bundle $\cH$, and a vertical frame $\{\cZ_j\}$, which is an orthonormal frame of the submanifold $\cN_x = \fq^{-1}\fq(x)$. Thus, if the mean curvature of $\fq^{-1}\fq(x)$ is zero, that means (with Einstein's convention here and below)
$\ttH\nabla_{Z_i}Z_i=(\Pi_{\cM} - \Pi_{\cN})\nabla_{\ttZ_i}\ttZ_i=0$ for $\cN= \fq^{-1}\fq(x)$, $\Delta^{\cH}$ is equal to the generator of the lifted process \cref{eq:lifted} $\ttL_{\cH} f = \rD_{H_i}(\rD_{H_i} f) - \rD_{\rH\nabla_{H_i}H_i}f$ \cite[chapter 4]{Baudoin}. The horizontal Brownian motion is a process with generator $\frac{1}{2}\Delta^{\cH}$. Following the proof of \cref{theo:laplace}, we have
\begin{theorem}\label{theo:lift}Assume the mean curvature of every vertical submanifold $\fq^{-1}(\fq(x))$ is zero. Then the second-order differential operator corresponding to the lifted Brownian motion \cref{eq:lifted}, on a lifted function $f$ is the same as that of the horizontal Brownian motion
  \begin{equation}
\Delta^{\cH}_f = \ttL^{\cH}f  = -\egrad_f^{\sfT}\sum_{i=1}^{\dim \cE}\Gamma^{\cH}(e_i, \rH\sfg^{-1} e_i) + \Tr(\rH\sfg^{-1}\ehess_f).
  \end{equation}
  Note $\sum_i\Gamma^{\cH}(e_i, \rH\sfg^{-1} e_i)=\sum_i\Gamma^{\cH}(\rH\sfg^{-1}e_i,  e_i)$ by \cref{eq:traceManipulation}.

If $\sigma(x)$ is a linear operator on $\cE$, such that $\rH\sigma\sigma^{\sfT}\sfg\rH = \ttH$ and $\sigma$ is smooth in $x$, then the lifted It{\=o} and Stratonovich equations are
  \begin{gather}
    dX_t =  -\frac{1}{2}\sum_{i=1}^n\Gamma^{\cH}(e_i, \rH\sfg^{-1}e_i) dt + \rH\sigma dW_t,
    \label{eq:HorizontalIto}\\
dX_t = -\frac{1}{2}\sum_{i=1}^n(\nabla^{\cH}_{\rH\sigma e_i}\rH\sigma e_i)_{X_t} + (\rH\sigma)_{X_t}\circ dW_t, \label{eq:HorizontalStratAlt}\\
    dX_t = -\frac{1}{2}\sum_{i=1}^n\rH\Gamma^{\cH}(e_i, \rH\sfg^{-1}e_i)_{X_t} + \rH_{X_t}\circ (\sigma dW_t),
    \label{eq:HorizontalStrat}    
\end{gather}
where we assume the stronger condition $\rH\sigma\sigma^{\sfT} = \rH\sfg^{-1}$ for the last equation.
\end{theorem}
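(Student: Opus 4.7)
The plan is to mirror the proof of \cref{theo:laplace} almost verbatim, with the substitutions $\Pi\mapsto\rH$ and $\Gamma\mapsto\Gamma^{\cH}$. The vanishing mean curvature hypothesis enters only through the identification $\Delta^{\cH}_f=\sum_i\rD_{H_i}\rD_{H_i}f-\sum_i\rD_{\rH\nabla_{H_i}H_i}f$ (stated just above the theorem), so after that reduction I can proceed by pure bookkeeping in embedded coordinates. Throughout, I use that $\rH$ is the $\sfg$-compatible projection onto $\cH_x$, so $\rH^2=\rH$ and $\rH\sfg^{-1}=\sfg^{-1}\rH^{\sfT}$ is symmetric, exactly the two properties of $\Pi$ exploited in \cref{theo:laplace}.

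For the generator formula, I would start from $\rD_{H_i}\rD_{H_i}f=H_i^{\sfT}\ehess_f H_i+\egrad_f^{\sfT}\rD_{H_i}H_i$ and use $\rH\nabla_{H_i}H_i=\nabla^{\cH}_{H_i}H_i=\rD_{H_i}H_i+\Gamma^{\cH}(H_i,H_i)$ to cancel the Euclidean-derivative term, obtaining the bilinear expression
\begin{equation*}
\ttL^{\cH}f=\sum_i H_i^{\sfT}\ehess_f H_i-\egrad_f^{\sfT}\sum_i\Gamma^{\cH}(H_i,H_i).
\end{equation*}
Since $\{H_i\}$ is an $\sfg$-orthonormal frame of $\cH_x$, \cref{lem:projection} (applied to the subspace $\cH_x\subset\cE$ with projection $\rH$) converts each sum into one over the standard basis of $\cE$ weighted by $\rH\sfg^{-1}e_i$, giving the asserted SOO formula $\Tr(\rH\sfg^{-1}\ehess_f)-\egrad_f^{\sfT}\sum_i\Gamma^{\cH}(e_i,\rH\sfg^{-1}e_i)$. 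The symmetry $\sum_i\Gamma^{\cH}(e_i,\rH\sfg^{-1}e_i)=\sum_i\Gamma^{\cH}(\rH\sfg^{-1}e_i,e_i)$ follows from \cref{eq:traceManipulation} together with the identities $\rH^2=\rH$ and $\sfg^{-1}\rH^{\sfT}=\rH\sfg^{-1}$, as in the proof of \cref{theo:laplace}.

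For \cref{eq:HorizontalIto}, I would compute the generator of $dX_t=\mu dt+\rH\sigma dW_t$ with $\mu=-\tfrac12\sum_i\Gamma^{\cH}(e_i,\rH\sfg^{-1}e_i)$ via It{\=o}'s formula: the drift piece matches by construction, and for the diffusion piece I rewrite $\rH\sigma\sigma^{\sfT}\rH^{\sfT}=\rH\sigma\sigma^{\sfT}\sfg\rH\sfg^{-1}$ using $\rH^{\sfT}=\sfg\rH\sfg^{-1}$, then invoke the hypothesis $\rH\sigma\sigma^{\sfT}\sfg\rH=\rH$ to reduce the diffusion trace term to $\tfrac12\Tr(\rH\sfg^{-1}\ehess_f)$. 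Hence the generator equals $\tfrac12\ttL^{\cH}f$. The Stratonovich forms \cref{eq:HorizontalStratAlt}--\cref{eq:HorizontalStrat} would be obtained by converting to It{\=o} and matching \cref{eq:HorizontalIto}; the argument is line-by-line identical to the one already carried out for \cref{eq:BrownianStratAlt} and \cref{eq:BrownianStrat} in \cref{theo:laplace}. The only input one reuses non-trivially is the identity $\Gamma^{\cH}(v_1,v_2)=-(\rD_{v_1}\rH)v_2+\rH\Gamma^{\cH}(v_1,v_2)$ for horizontal $v_1,v_2$, which follows from the horizontality of $\nabla^{\cH}_{\rH v_1}\rH v_2$ in exactly the same way as before.

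The main obstacle, and the place I would be most careful, is the first step: confirming that the trace-over-horizontal-frame identity applies to $\Gamma^{\cH}$ even though $\Gamma^{\cH}$ (like $\Gamma$) is only defined canonically on horizontal inputs and is extended bilinearly to $\cE\times\cE$. Since both sums in \cref{eq:Bsum} only ever feed horizontal vectors into $\Gamma^{\cH}$ through the projection $\rH$, and since $\rH\sfg^{-1}$ is a $\sfg$-self-adjoint projector onto $\cH_x$, the extension ambiguity is harmless. After that point, every remaining manipulation is an exact copy of the corresponding step in the proof of \cref{theo:laplace}.
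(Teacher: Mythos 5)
Your proposal is correct and follows exactly the route the paper intends: the paper itself gives no separate argument for \cref{theo:lift} beyond the phrase ``Following the proof of \cref{theo:laplace},'' and your write-up supplies precisely those details — reduce to $\ttL^{\cH}f=\sum_i\rD_{H_i}\rD_{H_i}f-\sum_i\rD_{\rH\nabla_{H_i}H_i}f$ via the zero mean-curvature hypothesis, convert the frame sums with \cref{eq:Bsum} using that $\rH$ is the $\sfg$-compatible projection onto $\cH_x$, and repeat the It{\=o}/Stratonovich generator matching with $\Pi,\Gamma$ replaced by $\rH,\Gamma^{\cH}$. Your closing remark on the harmlessness of the bilinear extension of $\Gamma^{\cH}$ is a sound extra check, not a deviation.
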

\section{Laplace-Beltrami operators and Brownian motions on matrix manifolds}\label{sec:examples}
We now consider matrix manifolds, where $\cE=\R^{n\times m}$ is a matrix vector space ($n, m$ are two positive integers). We equip $\cE$ with the trace (Frobenius) inner product $\Tr A^{\sfT}A$ for $A\in \cE$ to make it an inner product space. Note that we use the subscript $_{ij}$ to denote the $ij$-th entry of a matrix (a scalar), but we will use the notation $E_{ij}$ to denote an elementary matrix which is zero everywhere except that the $ij$-th entry is $1$, $\{E_{ij}\}_{i=1,j=1}^{i=n,j=m}$ forms the standard basis of $\cE$. We apologize for the possible confusion, however, we will not have a matrix named $E$. Thus, for a matrix $C$, we have $C = \sum_{ij} C_{ij}E_{ij}$ where the indices run over the dimensions of $C$. We have a useful identity
\begin{equation}C =\sum_{ij}E_{ij}C^{\sfT}E_{ij}\label{eq:useful}
\end{equation}
as $\sum_{ij}E_{ij}C^{\sfT}E_{ij} = \sum_{ijab} C_{ba}E_{ij}E_{ab}E_{ij}$, and $E_{ij}E_{ab}E_{ij}=0$ unless $a=j, b=i$, thus, the sum is $\sum_{ij} C_{ij}E_{ij}=C$. Alternatively, $C = C\sum_iE_{ii}=\sum_{ij}CE_{ij}E_{ij}=\sum_{ij}E_{ij}C^{\sfT}E_{ij}$ from \cref{eq:traceManipulation}.

Consider a function $f$ on $\cM$, extended to a function on $\cE$. In an SOO $(\ttA, \ttM)$, $\ttA$ and the Euclidean gradient $\egrad_f$ are matrices, $\ttM$ and the Euclidean Hessian $\ehess_f$ are operators on $\cE$. The trace of $\Pi\sfg^{-1}\ehess_f$ is the operator trace of the operator $\omega \mapsto \Pi\sfg^{-1}\ehess_f\omega$, which is $\sum_{ij}(\Pi\sfg^{-1}\ehess_fE_{ij})_{ij}$. 
\subsection{Matrix Lie groups with left-invariant metrics}
Brownian motions on a Lie group with a left-invariant metric has been well-studied. This section offers new formulas for the Stratonovich and It{\=o} drifts, as well as formulas for the projection and the Christoffel function discussed previously. They allow us to compute explicitly the drifts for common Lie groups, and also describe the Brownian motions explicitly.

We will consider a connected Lie subgroup $\rG$ of $\GL(N)$, the Lie group of invertible matrices, with Lie algebra $\fG\subset \R^{N\times N}$. The trace metric (trace inner product) $(A, B)\mapsto \sum_{ij}A_{ij}B_{ij} = \Tr A^{\sfT}B$ for $A, B\in\R^{N\times N}$ induces an inner product on $\fG$. A left-invariant metric on $\rG$ corresponds to an inner product on $\fG$, given by a positive-definite linear operator $\cI$ on $\fG$. We could define $\cI$ by fixing an orthonormal basis $\{v_i\}_{i=1}^{\dim\fG}$ of $\fG$ under the trace inner product, choosing a positive-definite matrix $\cI_P$ in $\R^{\dim\fG\times\dim\fG}$, and  then define $\cI v = \sum_{ij=1}^{\dim\fG}\cI_{P}^{ij} \langle v, v_i \rangle_{\cE}v_j$ for the inner product $\langle v, v\rangle_{\cI} = \sum_{ij=1}^{\dim\fG}\cI_{P}^{ij} \langle v, v_i \rangle_{\cE}\langle v, v_j \rangle_{\cE}$.

Under the trace metric on $\cE=\R^{N\times N}$, let $\pg$ be the orthogonal projection to $\fG$, 
(thus $\pg$ is self-adjoint in this metric), and we split $\cE =\fG\oplus \fGperp$, where $\fGperp = (\dI_{\cE} - \pg)\cE$. We extend $\cI$ to $\cE$ by choosing an inner product on $\fGperp$, while keeping $\fG$ and $\fGperp$ orthogonal in the $\cI$-pairing. We denote the extended operator on $\cE$ also by $\cI$.
The operation of $\cI$ on $\fGperp$ could be taken to be the identity operator, for example, the particular extension is not important. 
We define the following pairing on $\cE$, extending the pairing on $\fG$
\begin{equation}\langle \omega_1,\omega_2\rangle_{\cI} = \Tr\{\omega_1^{\sfT}\cI(\omega_2)\} = \Tr\{\cI(\omega_1)^{\sfT}\omega_2\}.
\end{equation}
\begin{lemma}Assume $\cI$ is self-adjoint in the trace metric and respects the orthogonality of $\fG$ and $\fGperp$, thus, $\cI\fG=\fG$ and $\cI\fGperp=\fGperp$ then $\pg$ {\it is also} the metric-compatible projection from $\cE$ to $\fG$ in the metric $\langle .,.\rangle_{\cI}$. That means
\begin{equation}\cI(\pg\omega) = \pg\cI(\omega).\end{equation}
\end{lemma}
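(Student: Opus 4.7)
The identity $\cI(\pg\omega)=\pg\cI(\omega)$ is essentially a tautology once the invariance hypotheses are unpacked, so I would present it as a short direct computation, followed by a verification that the resulting map satisfies the two defining properties of the metric-compatible projection from \cref{lem:projection} in the $\cI$-pairing.

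\textbf{Step 1: Commutation.} Decompose $\omega = \pg\omega + (\dI_{\cE}-\pg)\omega$ according to the splitting $\cE=\fG\oplus\fGperp$. By the hypothesis $\cI\fG=\fG$, the element $\cI(\pg\omega)$ lies in $\fG$, hence is fixed by $\pg$; by the hypothesis $\cI\fGperp=\fGperp$, the element $\cI((\dI_{\cE}-\pg)\omega)$ lies in $\fGperp$, hence is killed by $\pg$. Applying $\pg$ to $\cI(\omega) = \cI(\pg\omega) + \cI((\dI_{\cE}-\pg)\omega)$ then gives $\pg\cI(\omega) = \cI(\pg\omega)$, which is the stated commutation.

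\textbf{Step 2: Metric-compatible projection.} To show $\pg$ is the $\cI$-compatible projection to $\fG$, I would verify the two conditions of \cref{lem:projection} applied with $\sfg=\cI$ and $\cV=\fG$. The idempotence $\pg v=v$ for $v\in\fG$ is immediate from the construction of $\pg$ as an orthogonal (trace-metric) projection onto $\fG$. For compatibility, take $e\in\cE$ and $v\in\fG$; using that $\pg$ is self-adjoint in the trace metric and that $\cI(v)\in\fG$ (so $\pg\cI(v)=\cI(v)$),
\begin{equation*}
\langle \pg e, v\rangle_{\cI} = \Tr\{(\pg e)^{\sfT}\cI(v)\} = \Tr\{e^{\sfT}\pg\,\cI(v)\} = \Tr\{e^{\sfT}\cI(v)\} = \langle e, v\rangle_{\cI}.
\end{equation*}
Since uniqueness of the metric-compatible projection is part of \cref{lem:projection}, $\pg$ must coincide with the $\cI$-compatible projection to $\fG$.

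\textbf{Remark on the obstacle.} There is no real difficulty here; the only thing worth being careful about is the role of each hypothesis. The self-adjointness of $\cI$ in the trace metric is used implicitly through the fact that the trace-orthogonal complement of $\fG$ is preserved (so the direct sum $\fG\oplus\fGperp$ is also $\cI$-orthogonal), and the invariance $\cI\fG=\fG$, $\cI\fGperp=\fGperp$ is what allows the key line $\pg\cI=\cI\pg$ to go through. Without this block-diagonal structure of $\cI$ relative to the splitting, $\pg$ would generally fail to be the $\cI$-compatible projection.
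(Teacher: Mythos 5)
Your proof is correct, and it reaches the same two conclusions (the commutation identity, then $\cI$-compatibility of $\pg$) by a slightly different route. For the commutation, you decompose the \emph{argument} $\omega$ along $\fG\oplus\fGperp$ and use only the invariance hypotheses $\cI\fG=\fG$, $\cI\fGperp=\fGperp$ together with the fact that $\pg$ fixes $\fG$ and annihilates $\fGperp$; the paper instead tests the identity against an arbitrary $\nu\in\cE$, splits into the cases $\nu\in\fG$ and $\nu\in\fGperp$, and invokes the self-adjointness of $\cI$ twice in the scalar manipulation $\Tr\{\nu^{\sfT}\cI(\pg\omega)\}=\Tr\{(\cI\nu)^{\sfT}\pg\omega\}=\Tr\{(\cI\nu)^{\sfT}\omega\}=\Tr\{\nu^{\sfT}\cI(\omega)\}$. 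Your version makes it transparent that the operator identity $\pg\cI=\cI\pg$ needs nothing beyond the block-diagonal structure of $\cI$, while self-adjointness of $\cI$ and $\pg$ only enters when interpreting $\langle\cdot,\cdot\rangle_{\cI}$ as a symmetric pairing and verifying compatibility. For the second half, you check the two defining conditions of \cref{lem:projection} and appeal to its uniqueness clause, whereas the paper shows directly that $\Tr\{\nu^{\sfT}\cI(\pg\omega)\}=\Tr\{(\pg\nu)^{\sfT}\cI(\omega)\}$; these are equivalent, and both are complete.
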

\begin{proof}We verify $\nu^{\sfT}\cI(\pg\omega)= \nu^{\sfT}\pg(\cI(\omega))$ for $\nu\in\cE$. When $\nu\in \fGperp$, this is obvious as both sizes are zero by assumption. In the remaining case, $\nu\in\fG$, $\nu^{\sfT}\pg(\cI(\omega))= \nu^{\sfT}\cI(\omega)$ since $\pg$ is the orthogonal projection under trace, while 
$$\nu^{\sfT}\cI(\pg\omega)=(\cI\nu)^{\sfT}\pg\omega= (\cI\nu)^{\sfT}\omega = \nu^{\sfT}\cI(\omega)
$$
where the first equality is because $\cI$ is self-adjoint, the second is because $\cI\nu\in\fG$, and the third is because $\cI$ is self-adjoint again.

Since $\pg$ is self-adjoint under trace inner product, this means $\nu^{\sfT}\cI(\pg\omega)= \nu^{\sfT}\pg(\cI(\omega))= (\pg\nu)^{\sfT}\cI(\omega)$ for $\nu\in\cE$, thus $\pg$ is metric-compatible under $\cI$.
\end{proof}
A matrix $\xi\in \cE$ is in the tangent space $T_x\rG$ precisely if $x^{-1}\xi\in\fG$. Consider the left-invariant metric on $\rG$ induced by the pairing below on $\cE$ at $x\in\rG$
\begin{equation}(\omega_1, \omega_2)\mapsto \Tr \{(x^{-1}\omega_1)^{\sfT}\cI(x^{-1}\omega_2)\}\end{equation}
for $\omega_1,\omega_2\in\cE$, or equivalently, for $\omega\in\cE$, define the metric operator
\begin{equation}\sfg(x)(\omega) :=  (x^{-1})^{\sfT}\cI(x^{-1} \omega).\label{eq:gmetric}
\end{equation}
\emph{We will use Einstein's summation convention for the remaining sub-sections of this section, unless indicated otherwise. }
\begin{theorem}Under the metric \cref{eq:gmetric} defined by $\cI$, for $x\in\rG, \omega\in\cE$
\begin{gather}
  \Pi(x)(\omega) = x(x^{-1}\omega)_{\pg}\label{eq:Gproj},\\
  \Pi(x)\sfg(x)^{-1}(\omega) = x\cI^{-1}(x^{\sfT} \omega)_{\pg}, \label{eq:Gprojg}
\end{gather}
\begin{equation}
\begin{split}
  \Gamma(x; \xi, \eta) &=   - \frac{1}{2}(\xi x^{-1}\eta  + \eta x^{-1}\xi) \\
            &\quad+ \frac{1}{2}x \cI^{-1}\{
            [\cI(x^{-1}\xi), (x^{-1}\eta)^{\sfT}] \
            + [\cI(x^{-1}\eta), (x^{-1}\xi)^{\sfT}]\}_{\pg},\label{eq:GammaG}
\end{split}
\end{equation}
where $\lbrack,\;\rbrack$ is the Lie bracket. Using Einstein's summation convention, we have
\begin{equation}  
\begin{split}
  (\Delta_f)_{x} &=   \{x(\cI^{-1}(x^{\sfT} \ehess_f E_{ij}))_{\pg}\}_{ij} \\
  &\quad+ \Tr\egrad_f^{\sfT}x\{
  E_{ij}\cI^{-1}(E_{ij})_{\pg}  - \cI^{-1}[(E_{ij})_{\pg}, E_{ji}]_{\pg}\}.
\end{split}  
\end{equation}
Let $B^{[N\times N]}$ be a Wiener process on $\R^{N\times N}$. Then $(B^{[N\times N]})_{\pg}$ is a Euclidean Brownian motion on $\fG$ with the trace inner product. The Stratonovich and It{\=o} form of the Riemannian Brownian motion on $\rG$ are given by
\begin{gather}
  dX_t = X_t\circ \cI^{-\frac{1}{2}}(dB^{[N\times N]})_{\pg} -\frac{1}{2} X_t\cI^{-1}
  [(E_{ij})_{\pg}, E_{ji}]_{\pg}dt,\label{eq:matrixStrat}\\  
  dX_t = X_t \cI^{-\frac{1}{2}}(dB^{[N\times N]})_{\pg} + \frac{1}{2}X_t\{
  E_{ij}\cI^{-1}(E_{ij})_{\pg}  - \cI^{-1}[(E_{ij})_{\pg}, E_{ji}]_{\pg}\} dt.\label{eq:matrixIto}
\end{gather}
\end{theorem}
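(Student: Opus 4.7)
My plan is to establish each formula by combining the left-invariant structure of the metric with the general identities of \cref{theo:laplace}, using the matrix identity \cref{eq:useful} for the final algebraic reductions. First I would verify the projection formula $\Pi(x)(\omega) = x(x^{-1}\omega)_{\pg}$: its image lies in $T_x\rG = x\fG$; it is idempotent since $(x^{-1}\eta)_{\pg} = x^{-1}\eta$ for $\eta\in x\fG$; and metric-compatibility with respect to $\sfg(x)$ follows from trace cyclicity together with the preceding lemma that $\pg$ is metric-compatible under $\cI$. The formula $\Pi(x)\sfg(x)^{-1}\omega = x\cI^{-1}(x^{\sfT}\omega)_{\pg}$ is then immediate: inverting $\sfg(x)y = (x^{-1})^{\sfT}\cI(x^{-1}y) = \omega$ gives $y = x\cI^{-1}(x^{\sfT}\omega)$, and applying $\Pi(x)$ introduces the $\pg$.

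For the Christoffel function, I would apply the Koszul formula to left-invariant vector fields $U(y) = yu$, $V(y) = yv$ for $u,v\in\fG$, extended to all of $\cE$ by the same linear expression. Left-invariance makes $\langle U,V\rangle_{\sfg}$ constant on $\rG$, killing three of the six Koszul terms. Expanding $\langle [u,v],w\rangle_{\cI}$ by writing out the commutator and applying trace cyclicity identifies the metric adjoint $\ad^*_u w = \cI^{-1}[u^{\sfT},\cI(w)]_{\pg}$; Koszul then yields $2\nabla_u v = [u,v] - \ad^*_u v - \ad^*_v u$ in $\fG$. Since $V$ is linear in $y$, $\rD_U V|_x = xuv$, and $\Gamma(xu,xv) = \nabla_U V|_x - \rD_U V|_x$ matches the stated formula after rewriting $-[u^{\sfT},\cI(v)] = [\cI(v),u^{\sfT}]$. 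The expression in the theorem is the natural bilinear extension of this to all of $\cE\times\cE$.

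Next I would substitute into \cref{theo:laplace} with the trace-orthonormal basis $\{E_{ij}\}$ of $\cE = \R^{N\times N}$. The Hessian trace $\Tr_{\cE}\Pi\sfg^{-1}\ehess_f$ reproduces the first bracketed expression directly. For the drift, set $w_{ij} := \cI^{-1}(x^{\sfT}E_{ij})_{\pg}$ so that $\Pi\sfg^{-1}E_{ij} = xw_{ij}$, and expand $\Gamma(E_{ij}, xw_{ij})$ from the Christoffel formula. Summing over $i,j$ and repeatedly applying \cref{eq:useful} to collapse the quadratic $E_{ij}$ expressions, together with self-adjointness of $\pg$ and $\cI$ under the trace, cyclicity, and the commutation $\pg\cI = \cI\pg$, reduces $-\sum_{ij}\Gamma(E_{ij},\Pi\sfg^{-1}E_{ij})$ to the compact two-term expression in the statement.

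Finally, for the Brownian SDEs I would take $\sigma(x)(\omega) = x\cI^{-\frac{1}{2}}\pg(\omega)$. A direct computation of the trace-adjoint gives $\sigma^{\sfT}(\nu) = \cI^{-\frac{1}{2}}\pg(x^{\sfT}\nu)$, hence $\sigma\sigma^{\sfT} = \Pi\sfg^{-1}$, which is the stronger hypothesis needed in both \cref{eq:BrownianIto} and \cref{eq:BrownianStrat}. Since $\sigma$ already takes values in $T_x\rG$ we have $\Pi\sigma = \sigma$, so the diffusion term in either SDE becomes $X_t\cI^{-\frac{1}{2}}(dB^{[N\times N]})_{\pg}$. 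The It{\=o} drift is then read off from the previous step, while the Stratonovich drift comes from further simplifying $\sum_{ij}\Pi\Gamma(E_{ij},\Pi\sfg^{-1}E_{ij})$: the It{\=o}--Stratonovich correction $\tfrac{1}{2}\sum_{ij}xE_{ij}\cI^{-1}(E_{ij})_{\pg}$ cancels the non-bracket contribution, leaving only the bracket piece in \cref{eq:matrixStrat}. The hard part will be the algebraic condensation in the Laplacian step: a naive expansion of $\sum_{ij}\Gamma(E_{ij},\Pi\sfg^{-1}E_{ij})$ produces several subtypes of terms summed over $N^2$ indices, and assembling them into the two-term form requires a disciplined use of \cref{eq:useful}, repeated cyclicity, and careful tracking of where $\pg$ acts on each slot of each commutator.
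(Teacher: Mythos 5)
Your proposal is correct, and it reaches every claim in the theorem, but it derives the Christoffel function by a genuinely different route than the paper. The paper does not derive \cref{eq:GammaG}: it takes the formula as given and verifies the three defining properties of a Levi-Civita Christoffel function from \cref{subsec:Levi} — that $x^{-1}(\rD_{\ttX}\ttY+\Gamma(\ttX,\ttY))\in\fG$ (so $\nabla$ is a connection), symmetry, and metric compatibility \cref{eq:MetricComp}, the last by a direct trace computation. You instead run the Koszul formula on left-invariant fields, which collapses to $2\nabla_u v=[u,v]-\ad^*_u v-\ad^*_v u$ with $\ad^*_u w=\cI^{-1}[u^{\sfT},\cI(w)]_{\pg}$; I checked that this adjoint identification and the subsequent subtraction of $\rD_UV|_x=xuv$ do reproduce \cref{eq:GammaG} exactly, and your remark that the theorem's formula is then a bilinear extension to $\cE\times\cE$ is the right way to reconcile this with the paper's notion of a Christoffel function. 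Your route is constructive and explains where the bracket terms come from; the paper's route is shorter to write down but presupposes the answer. The remaining steps — the projection and $\Pi\sfg^{-1}$ formulas, the reduction of $-\sum_{ij}\Gamma(E_{ij},\Pi\sfg^{-1}E_{ij})$ via \cref{eq:useful} and \cref{eq:traceManipulation}, the choice $\sigma(x)\omega=x\cI^{-\frac{1}{2}}(\omega_{\pg})$ with $\sigma\sigma^{\sfT}=\Pi\sfg^{-1}$, and the cancellation of the non-bracket term in the It{\=o}-to-Stratonovich correction — match the paper's proof essentially step for step, and your identification of which term survives in \cref{eq:matrixStrat} is exactly what the paper obtains from $\rD_{\sigma(x)E_{ij}}\sigma(x)E_{ij}=xE_{ij}\cI^{-1}(E_{ij})_{\pg}$. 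The only item you pass over silently is the assertion that $(B^{[N\times N]})_{\pg}$ is a Wiener process on $\fG$, but the paper treats this as standard as well, so nothing essential is missing.
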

It is known \cite{liao2004levy} if $v_{\alpha}$, $\alpha=1,\cdots, \dim\fG$ is an orthonormal base of $\fG$ in the left invariant metric $\cI$, and $v_0 = -\frac{1}{2}\langle v_{\beta}, [v_{\alpha}, v_{\beta}]\rangle_{\sfg}v_{\alpha}$, then the Stratonovich equation for the Brownian motion on $\rG$ is of the form
\begin{equation}
  dX_t = X_t\circ dW^{\alpha}v_{\alpha} + X_tv_0 dt
\end{equation}
where $W$ is a Wiener process on $\R^{\dim\fG}$. The stochastic component in \cref{eq:matrixStrat} is easily seen to be consistent with this. We will show the drift is also consistent. Our formula shows the Stratonovich drift $\mu_{\cI}$ corresponding to $\cI$ is $\mu_{\cI}=X_t\cI^{-1}(X_t^{-1}\mu_{\dI_{\cE}})$, thus, if the drift is zero for one metric then it is zero for all others, as is well-known. The drift is zero for unimodular groups. The condition $v_0=0$ implies $\Tr\ad_{v_{\alpha}}=0$ for each basis vector $v_{\alpha}$, hence $\Tr\ad_\xi=0$ for each $\xi\in\fG$, the infinitesimal condition for unimodularity. Thus, we can check unimodularity with \cref{eq:matrixStrat}.
\begin{proof}It is clear $\Pi$ in \cref{eq:Gproj} satisfies $x^{-1}\Pi(x)\omega \in\sfg$, hence $\Pi(x)\omega\in T_x\rG$, and $\Pi(x)^2=\Pi$. Metric compatibility follows from left-invariance. As $\sfg(x)^{-1}\omega = x\cI^{-1}(x^{\sfT}\omega)$, \cref{eq:Gprojg} follows. To derive $\Gamma$, we can start with \cref{eq:GammaLeviCivita} and simplify the expression, but it is easier to show $\Gamma$ given in \cref{eq:GammaG} satisfies the requirements of the Christoffel function as reviewed in \cref{subsec:Levi}. Thus, consider $\Gamma$ as defined in \cref{eq:GammaG}. Since the torsion-free requirement $\Gamma(\ttX, \ttY) = \Gamma(\ttY, \ttX)$ is immediate if $\ttX, \ttY$ are two vector fields, it remains to verify $\nabla_{\ttX}\ttY$ in \cref{def:Levi} is a vector field, and \cref{eq:MetricComp} is satisfied. Setting $v(x) := x^{-1}\ttX(x)\in \fG$ and $w(x):=x^{-1}\ttY(x)\in \fG$, we want to show
$x^{-1}(\rD_{\ttX}\ttY + \Gamma(\ttX, \ttY))_x\in \fG$. As the second line of \cref{eq:GammaG} is in $x\fG$, it suffices to show for $\ttX(x) = xv(x), \ttY(x) = xw(x)$
$$\begin{gathered}x^{-1}\rD_{\ttX}\ttY -\frac{1}{2}(x^{-1}\ttX x^{-1}\ttY + x^{-1}\ttY x^{-1}\ttX)\in \fG\\
  \Leftrightarrow v(x)w(x) + \rD_{xv(x)}w(x)-\frac{1}{2}(v(x)w(x)+w(x)v(x))\in \fG\\
    \Leftrightarrow \rD_{xv(x)}w(x) +\frac{1}{2}[v(x), w(x)]\in\fG.
\end{gathered}$$
The last condition holds since $w$ is a function from $\rG$ to $\fG$, thus $\rD_{xv}w\in\fG$, while $[v, w]\in\fG$ as $\fG$ is a Lie-subalgebra (we drop the variable name for brevity). For metric compatibility, we expand two expressions below
$$\begin{gathered}\rD_{\ttX}\Tr \{(x^{-1}\ttY)^{\sfT}\cI(x^{-1}\ttY)\} =
  \rD_{xv}\Tr w^{\sfT}\cI(w)= 2\Tr(\rD_{xv}w)^{\sfT}\cI(w),
  \\
2\langle \ttY(x), \nabla_{\ttX}\ttY \rangle_{\sfg} 
%+ \Gamma(\ttX, \ttY)\rangle_{\sfg} 
=2\Tr(w^{\sfT}\cI(\rD_{xv}w +\frac{1}{2}[v, w]
+\frac{1}{2} \cI^{-1}\{
            [\cI(v), w^{\sfT}]  + [\cI(w), v^{\sfT}]\}_{\pg}).
\end{gathered}$$
In computing the difference, since $\cI$ is self-adjoint under the trace inner product, the terms with $\Tr(\rD_{xv}w)^{\sfT}\cI(w)$ cancels. It remain to simplify the below using $w\in \fG$  and $\Tr A[B,C] = \Tr B[C,A]$
$$\begin{gathered}
  \Tr(w^{\sfT}\{\cI([v, w]) + [\cI(v), w^{\sfT}]_{\pg} + [\cI(w), v^{\sfT}]_{\pg}\})\\
  =\Tr\cI(w)^{\sfT}[v, w] + \Tr w^{\sfT}[\cI(v), w^{\sfT}] + \Tr w^{\sfT}[\cI(w), v^{\sfT}]\\
  =\Tr\cI(w)^{\sfT}[v, w]  + \Tr \cI(w)[v^{\sfT}, w^{\sfT}]
  =\Tr\cI(w)^{\sfT}[v, w]  + \Tr \cI(w)[w, v]^{\sfT}
  = 0
  \end{gathered}$$
since $\Tr w^{\sfT}[\cI(v), w^{\sfT}]=0$, $\Tr w^{\sfT}[\cI(w), v^{\sfT}] = \Tr\cI(w)[v^{\sfT},w^{\sfT}]$ between the second and the last line. Thus, $\nabla$ is metric invariant. Note that $\Gamma$ is left invariant, that means $x^{-1}\Gamma(x; \xi, \eta) = \Gamma(\dI_{\cE}; x^{-1}\xi, x^{-1}\eta)$. 

For $\Delta_f$ as an SOO of the form $(\ttA, \ttM)$, we compute
$$x^{-1}\ttA= -x^{-1}\Gamma(x; E_{ij}, x\cI^{-1}(x^{\sfT}E_{ij})_{\pg})=
-\Gamma(\dI_{\cE}; x^{-1}E_{ij}, \cI^{-1}(x^{\sfT}E_{ij})_{\pg})
$$
which is $-\Gamma(\dI_{\cE}; E_{ij}, \cI^{-1}(E_{ij})_{\pg})$ by \cref{eq:traceManipulation}. Simplify further
$$\begin{gathered}
x^{-1}\ttA= \frac{1}{2}(E_{ij} \cI^{-1}((E_{ij})_{\pg})  + \cI^{-1}((E_{ij})_{\pg}) E_{ij}) \\
            - \frac{1}{2} \cI^{-1}\{
            [\cI(E_{ij}), (\cI^{-1}((E_{ij})_{\pg}))^{\sfT}] \
            + [\cI(\cI^{-1}((E_{ij})_{\pg})), (E_{ij})^{\sfT}]\}_{\pg}.
\end{gathered}            $$
Using \cref{eq:traceManipulation} again, the first line simplifies to $E_{ij} \cI^{-1}(E_{ij})_{\pg}$. The second line also simplifies to
$$-\frac{1}{2} \cI^{-1}\{
[E_{ij}, (\cI^{-1}(\cI(E_{ij})_{\pg}))^{\sfT}]
+ [(E_{ij})_{\pg}, (E_{ij})^{\sfT}]\}_{\pg}
= -\cI^{-1}[(E_{ij})_{\pg}, E_{ji}]_{\pg}.
$$
From here, we get $\Delta_f$ and the It{\=o} drift. Note that we can take $\sigma(x)\omega = x\cI^{-\frac{1}{2}}(\omega_{\pg})$, with $\sigma^{\sfT}(x)\omega = \cI^{-\frac{1}{2}}(x^{\sfT}\omega)_{\pg}$ verifying $\sigma\sigma^{\sfT} = \Pi\sfg^{-1}$. Thus,
$$\rD_{\sigma(x) E_{ij}}\sigma(x) E_{ij} = x\cI^{-\frac{1}{2}}(E_{ij})_{\pg} \cI^{-\frac{1}{2}}(E_{ij})_{\pg}=xE_{ij} \cI^{-1}(E_{ij})_{\pg},
$$
which gives us \cref{eq:matrixStrat}. Finally, write $\xi =\langle v_{\alpha}, \xi\rangle_{\sfg}v_{\alpha}$ for $\xi\in\fG$, thus, for fixed $\beta$, $[\xi, v_{\beta}] = \langle v_{\alpha}, \xi\rangle_{\sfg}[v_{\alpha},v_{\beta}]$. Summing over $\alpha$ and $\beta$ and applying \cref{eq:Bsum} 
$$\begin{gathered}\langle v_{\beta}, [v_{\alpha}, v_{\beta}]\rangle_{\sfg}\langle v_{\alpha},\xi\rangle_{\sfg}
=\langle v_{\beta},[\xi, v_{\beta}]\rangle_{\sfg}=
\Tr \{\cI(v_{\beta})^{\sfT} [\xi, v_{\beta}]\}\\
=\Tr \{(\cI(\Pi\sfg^{-1}(E_{ij}))^{\sfT} [\xi, E_{ij}]\}
=\Tr \{(\cI(\cI^{-1}(E_{ij})_{\pg}))^{\sfT} [\xi, E_{ij}]\}\\
=\Tr \{((E_{ij})_{\pg})^{\sfT} [\xi, E_{ij}]\}
=\Tr \{\xi  [ E_{ij}, ((E_{ij})_{\pg})^{\sfT}]\}= \langle \cI^{-1}[ (E_{ij})_{\pg}, E_{ji}]_{\pg},\xi\rangle_{\sfg}.
\end{gathered}
$$
Since the inner product is nondegenerate on $\fG$, $\langle v_{\beta}, [v_{\alpha}, v_{\beta}]\rangle_{\sfg} v_{\alpha} = \cI^{-1}[ (E_{ij})_{\pg}, E_{ji}]_{\pg}$. This confirms the alternative form of the Stratonovich drift.
\end{proof}
Let us apply this result to Brownian motions of a few matrix groups. As mentioned, the Stratonovich drift for an arbitrary $\cI$ could be computed from the case $\cI=\dI_{\cE}$. The It{\=o} drift in general can not be simplified, but we will compute for $\cI=\dI_{\cE}$ also.

For $\GL^+(N)$, the group of $\R^{N\times N}$ matrices with positive determinant, $\pg$ is the identity map, $[(E_{ij})_{\pg},  E_{ji}]_{\pg}= E_{ii}- E_{jj}=\dI_N-\dI_N =0$, thus the Stratonovich drift is zero, as well-known. The It{\=o} drift is $\frac{1}{2}X_tdt$.

For $\SL(N)$, the subgroup of $\GL^+(N)$ with determinant $1$, $\pg(\omega) = \omega -\frac{\Tr\omega}{N}\dI_N$, for $\omega\in\R^N$, and $[E_{ij} - \frac{\Tr E_{ij}}{N}\dI_N, E_{ji}]_{\pg} = 0$ (since 
$[\dI_N, E_{ji}]=0$ while $[E_{ij}, E_{ji}]=0$ above), so the Stratonovich drift is also zero. The It{\=o} drift is $\frac{1}{2}X_t(E_{ij}(E_{ij}-\frac{\Tr E_{ij}}{N}\dI_N))dt = \frac{N-1}{2N}X_tdt$.

Consider $\Aff(N)$, the subgroup of $\GL^+(N+1)$ of the form $\begin{bmatrix}A& v\\0_{1\times N} & 1\end{bmatrix}$ with $A\in \GL^+(N), v\in\R^{N}$. This group could be used to model an object which could both move and deform linearly in $\R^N$, with the vector $v$ model an anchor point on the object, while $A$ specifies the linear deformation based at the anchor point. Here, $\fG$ is the subspace of $\R^{(N+1)\times(N+1)}$ with the last row is zero, $\pg(\omega)$ sends the last row of $\omega\in\R^{(N+1)\times(N+1)}$ to zero, hence
  $$\begin{gathered}\lbrack(E_{ij})_{\pg}, E_{ji}\rbrack_{\pg}=\sum_{i=1}^N[E_{i,N+1}, E_{N+1,i}]_{\pg} =\begin{bmatrix}\dI_N& 0_{N\times 1}\\0_{1\times N} & 0\end{bmatrix}=: J_N,\\
  E_{ij}(E_{ij})_{\pg} = \begin{bmatrix}\dI_N& 0_{N\times 1}\\0_{1\times N} & 0\end{bmatrix}.
  \end{gathered}$$
Thus, the Stratonovich drift is $-\frac{1}{2}X_t\cI^{-1}(J_N)$, the It{\=o} drift for $\cI=\dI_{\cE}$ is zero.

For $\SO(N)$, the subgroup of orthogonal matrices of determinant $1$, $\fG$ is the group of antisymmetric matrices and $\omega_{\pg} = \omega_{\asym}$. We have
$$\begin{gathered}\lbrack(E_{ij})_{\pg}, E_{ji}\rbrack_{\pg}=\frac{1}{2} \lbrack E_{ij}, E_{ji}\rbrack_{\asym} - \frac{1}{2} \lbrack E_{ji}, E_{ji}\rbrack_{\asym}
=0,\\
  E_{ij}(E_{ij})_{\asym} = \frac{1}{2}E_{ij}E_{ij} - \frac{1}{2}E_{ij}E_{ji}=
  \frac{1-N}{2}\dI_N.
\end{gathered}$$
Thus, the Stratonovich drift is zero, the It{\=o} drift for $\cI=\dI_{\cE}$ is $\frac{1-N}{4}X_t dt$, agreeing with \cite{LaplaceEmb}. In the case where $\cI$ is given by an $N\times N$ symmetric matrix $\bar{\cI}$ with positive entries such that $\cI(E_{ij}) = \bar{\cI}_{ij}E_{ij}$, the It{\=o} drift is given by $-\frac{1}{4}X_t\sum_i(\sum_{j\neq i}\bar{\cI}_{ij}^{-1})E_{ii}dt$. %, where $\hat{\cI}$ is a diagonal matrix with the $(i,i)$ entry $\sum_{j\neq i}\bar{\cI}_{ij}^{-1}$.

The special Euclidean group $\SE(N)$ allows us to simulate Brownian movements of rigid bodies. $\SE(N)$ is the subgroup of $\Aff(N)$ with $A$ orthogonal, $\pg$ sends the top $N\times N$ block to its antisymmetry part, and zero out the last line. Thus, the Stratonovich drift is zero
$$\begin{gathered}\lbrack(E_{ij})_{\pg}, E_{ji}\rbrack_{\pg}=\frac{1}{2} \sum_{i\leq N, j\leq N}\lbrack E_{ij}, E_{ji}\rbrack_{\asym}+\sum_{i\leq  N}[E_{i, N+1}, E_{N+1, i}]_{\asym} =0.
\end{gathered}$$
The difference between this and $\Aff(N)$ is $[(E_{ij})_{\pg}, E_{ji}]$ usually gives a diagonal block, which is zeroed out by $\pg$ in this case but not in the affine case. For a matrix group to be not unimodular, $[(E_{ij})_{\pg}, E_{ji}]$ must not be zero, and heuristically, the Lie algebra should contain some diagonal blocks.
\iffalse When $\cI$ is given by by-entry multiplication of a matrix $\bar{\cI}$
$$ E_{ij}\cI^{-1}(E_{ij})_{\pg} = -\frac{1}{2}\sum_{i \neq j\leq N}\bcI^{-1}_{ij}E_{ij}E_{ji} +
\bcI^{-1}_{i,N+1}E_{i,N+1}E_{i,N+1}
$$
which gives us the It{\=o} drift $-\frac{1}{4}X_t\sum_{i \neq j\leq N}\bcI^{-1}_{ij}E_{ii}dt$.
\fi
\subsection{Positive-definite matrices with the affine invariant metric} The manifold $\cM = \Sd{N}$ consists of positive definite symmetric matrices in $\cE=\R^{N\times N}$ with the metric operator $\sfg(x) \omega =x^{-1}\omega x^{-1}, x\in\Sd{N}$. This defines a pairing on $\cE^2$ which restricts to the {\it affine invariant metric} on $\Sd{n}$  
\begin{equation}\langle \omega, \omega\rangle_{\sfg} = \Tr \omega^{\sfT}x^{-1}\omega x^{-1}.
  \label{eq:Sdmetric}
\end{equation}
We have $\Pi\omega = \omega_{\sym}$ and $\Gamma(x, \xi, \eta) = -(\xi x^{-1}\eta )_{\sym}$. Thus, we need to evaluate
$$\begin{gathered}\ttA = (E_{ij}x^{-1}(x E_{ij}x)_{\sym})_{\sym}
  = \frac{1}{2}(E_{ij}E_{ij}x + E_{ij}E_{ji}x  )_{\sym}=\frac{N+1}{2}x,
\end{gathered}$$
\begin{proposition}The Laplace-Beltrami operator of $\Sd{n}$ with the metric \cref{eq:Sdmetric} at $x\in\Sd{n}$ is given by
  \begin{equation}\Delta_f = \frac{N+1}{2}\Tr x \egrad_f + (\ehess_f(x E_{ij} x)_{\sym})_{ij}.\label{eq:SdLaplace}
  \end{equation}
Let $B^{[N\times N]}_t$ be a Wiener processes on $\R^{N\times N}$ and $\{\epsilon_{ij}|1\leq i\leq j\leq N\}$ be the standard basis of $\Herm{N}$, $\epsilon_{ij} =\begin{cases}\sqrt{2}(E_{ij})_{sym} \text{ for }1\leq i<j\leq N\\
    E_{ii} \text{ for } 1\leq i=j\neq N.\end{cases}$. Expressing $(B^{[N\times N]}_t)_{\sym}$ in this basis we get a Wiener process $W_t$ on $\R^{\frac{N(N+1)}{2}}$. Define
  \begin{equation}\sigma(x) \omega = x^{\frac{1}{2}}\omega x^{\frac{1}{2}}
  \end{equation}
for $\omega \in \R^{N\times N}$, where $x^{\frac{1}{2}}$ is the positive-definite matrix square root. Then $\sigma$ satisfies $\Pi\sigma\sigma^{\sfT}\sfg\Pi = \Pi$, and can be used to construct the Riemannian Brownian motion in It{\=o} form in \cref{eq:BrownianIto}
\begin{equation}dX_t= \frac{N+1}{4}X_t dt + X_t^{\frac{1}{2}} (dB^{[N\times N]}_t)_{\sym}X_t^{\frac{1}{2}} = \frac{N+1}{4}X_t dt + \sigma(X_t) (\sum_{i\leq j}dW^{ij}_t\epsilon_{ij}).\label{eq:SdIto}
\end{equation}
Let $x = V\diag(\beta_1^2,\cdots, \beta_n^2)V^{\sfT}$ be a symmetric eigenvalue decomposition. Set
\begin{equation}S(x) := -V\diag(\beta_1^2(\frac{1}{4} +\sum_j\frac{\beta_j}{2(\beta_1+\beta_j)})),\cdots,\beta_i^2(\frac{1}{4} +\sum_j\frac{\beta_j}{2(\beta_i+\beta_j)})),\cdots)V^{\sfT}.
\end{equation}
Then the Stratonovich form in \cref{eq:BrownianStratAlt} is given by
\begin{equation}dX_t= (S(X_t)+ \frac{N+1}{4}X_t )dt + \sigma(X_t)\circ (\sum_{i\leq j}dW^{ij}_t\epsilon_{ij}).\label{eq:SdStratAlt}
\end{equation}
\end{proposition}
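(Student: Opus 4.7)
For \cref{eq:SdLaplace}, the preliminaries preceding the proposition already give $\Pi(x)\omega = \omega_{\sym}$, $\sfg(x)^{-1}\omega = x\omega x$, the Christoffel function $\Gamma(x;\xi,\eta) = -(\xi x^{-1}\eta)_{\sym}$, and the computation $\ttA = \tfrac{N+1}{2}x$ using $\sum_{ij}E_{ij}E_{ij} = \dI_N$ and $\sum_{ij}E_{ij}E_{ji} = N\dI_N$. Substituting into \cref{eq:embeddedLaplace} together with symmetry of $x$ yields the first-order term $\tfrac{N+1}{2}\Tr(x\egrad_f)$. For the Hessian term, I would combine the metric-compatibility identity $\sfg^{-1}\Pi^{\sfT} = \Pi\sfg^{-1}$ (established inside the proof of \cref{theo:laplace}) with cyclicity of the operator trace to rewrite $\Tr_{\cE}\Pi\sfg^{-1}\ehess_f = \Tr_{\cE}\ehess_f\Pi\sfg^{-1} = \sum_{ij}\bigl(\ehess_f(xE_{ij}x)_{\sym}\bigr)_{ij}$, completing \cref{eq:SdLaplace}.

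For \cref{eq:SdIto}, a direct computation in the Frobenius inner product shows $\sigma^{\sfT} = \sigma$ and $\sigma(x)\sigma(x)^{\sfT}\omega = x\omega x = \sfg(x)^{-1}\omega$, so the strong hypothesis $\Pi\sigma\sigma^{\sfT} = \Pi\sfg^{-1}$ (hence also $\Pi\sigma\sigma^{\sfT}\sfg\Pi = \Pi$) of \cref{theo:laplace} holds. Specializing \cref{eq:BrownianIto} gives drift $\tfrac{N+1}{4}X_t$ and stochastic part $\Pi(X_t)\sigma(X_t)\,dB^{[N\times N]}_t = X_t^{1/2}(dB^{[N\times N]}_t)_{\sym}X_t^{1/2}$, since $x^{1/2}\omega x^{1/2}$ is already symmetric. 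Expressing $(dB^{[N\times N]}_t)_{\sym}$ in the orthonormal basis $\{\epsilon_{ij}\}_{i\leq j}$ of $\Herm{N}$ produces the Wiener process $W_t$ on $\R^{N(N+1)/2}$ and gives both forms in \cref{eq:SdIto}.

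For \cref{eq:SdStratAlt}, set $\Phi_\epsilon(y) := y^{1/2}\epsilon y^{1/2}$ so that $\sigma(y)\epsilon = \Phi_\epsilon(y)$. By the It\=o--Stratonovich conversion applied to the It\=o SDE just derived, the claim reduces to
\begin{equation*}
S(x) = -\tfrac{1}{2}\sum_{i\leq j}(\rD_{\Phi_{\epsilon_{ij}}(x)}\Phi_{\epsilon_{ij}})(x).
\end{equation*}
Since $\sigma$ and the sum are equivariant under orthogonal conjugation $x\mapsto UxU^{\sfT}$, $\omega\mapsto U\omega U^{\sfT}$, I may assume $x = \diag(\beta_1^2,\ldots,\beta_N^2)$. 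The Fr\'echet derivative $L_v := \rD_v x^{1/2}$ is the unique symmetric solution of the Sylvester equation $Lx^{1/2}+x^{1/2}L = v$, whose entries in the eigenbasis are $(L_v)_{kl} = v_{kl}/(\beta_k+\beta_l)$; the Leibniz rule then gives $\rD_v\Phi_\epsilon = L_v\epsilon x^{1/2} + x^{1/2}\epsilon L_v$. Specializing to $\epsilon = \epsilon_{ij}$, using $\epsilon_{ij}^2 = \tfrac{1}{2}(E_{ii}+E_{jj})$ for $i<j$ and $\epsilon_{ii}^2 = E_{ii}$, and summing over $i\leq j$ collapses to a diagonal matrix with $i$-th entry $\beta_i^2\bigl(\tfrac{1}{2}+\sum_j\tfrac{\beta_j}{\beta_i+\beta_j}\bigr)$. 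Multiplying by $-\tfrac{1}{2}$ reproduces $S(x)$ in the eigenbasis, and equivariance extends the identity to all $x\in\Sd{N}$.

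The main obstacle will be the third step: the non-polynomial dependence of $x^{1/2}$ on $x$ forces the use of the Sylvester-equation representation of its Fr\'echet derivative, and the sum over basis elements of $\Herm{N}$ must be carefully separated into diagonal and off-diagonal contributions before it collapses to the closed form of $S(x)$.
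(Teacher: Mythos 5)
Your proposal is correct and follows essentially the same route as the paper: the Laplacian and It{\=o} parts are read off from $\Pi=\sym$, $\sfg^{-1}\omega=x\omega x$, $\sigma=\sigma^{\sfT}$, $\sigma\sigma^{\sfT}=\sfg^{-1}$, and the Stratonovich correction is computed via the Fr\'echet derivative of $x^{1/2}$ as the inverse Lyapunov\slash Sylvester operator, reduced to the eigenbasis of $x$. The only cosmetic differences are that you index the quadratic sum by the orthonormal basis $\{\epsilon_{ij}\}_{i\leq j}$ of $\Herm{N}$ rather than by all $(E_{ij})_{\sym}$ (the two sums agree) and you invoke orthogonal equivariance where the paper conjugates by $V$ directly.
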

We only mentioned the Stratonovich equation (\ref{eq:BrownianStratAlt}) since the form \cref{eq:BrownianStrat} is the same as the It{\=o} form, as $\Pi$ is constant.
\begin{proof}The expression in \cref{eq:SdLaplace} for the Laplace operator follows from the discussion preceding the proposition.

It is clear $\sigma^{\sfT} = \sigma$ and $\sigma\sigma^{\sfT}=\sfg^{-1}$, thus the It{\=o} form is as in the middle expression of \cref{eq:SdIto}. Since $\sym$ is the projection from $\R^{N\times N}$ to $\Herm{n}$, $(B^{[N\times N]}_t)_{\sym}$ is a Wiener process in $\Herm{N}$ (see \cite{Hsu}).
  For \cref{eq:SdStratAlt}, we need to evaluate (with Einstein's summation convention)
  $\rD_{\sigma(x)(E_{ij})_{\sym}}\sigma(x)(E_{ij})_{\sym}$.
  
  By differentiating the equation $(x^{\frac{1}{2}})^2 = x$, the Fr{\'e}chet derivative of the square root satisfies $\rD_\omega (x\mapsto x^{\frac{1}{2}}) = \fL_{x^{\frac{1}{2}}}^{-1}\omega$, where $\fL_A(\omega) = A\omega + \omega A$ is the Lyapunov operator for a symmetric matrix $A\in\Herm{n}, \omega\in\R^{N\times N}$. If $A = VDV^{\sfT}$ is a symmetric eigenvalue decomposition,  $D=\diag(d_1,\cdots,d_N)$ such that $d_i+d_j\neq 0$ for all $i,j$ and $V^{\sfT}V=\dI_N$, then denote by $\odot$ the by-entry multiplication (Hadamard product), with $\hat{D}=(\hat{D}_{ij})_{i,j=1}^N$ below, we have
  $$\fL_A^{-1}B = V((V^{\sfT}BV)\odot\hat{D}  )V^{\sfT}; \hat{D}_{ij} = \frac{1}{d_i+d_j}\text{ for }i,j=1\cdots N.
  $$
Thus, if $x = VD^2 V^{\sfT}$, $D = \diag(\beta_1,\cdots, \beta_n)$ and write $\sigma_x$ for $\sigma(x)$
  $$\rD_{\sigma_x(E_{ij})_{\sym}}\sigma_x(E_{ij})_{\sym}=
2 ((\fL_{x^{\frac{1}{2}}}^{-1} \sigma_x(E_{ij})_{\sym})(E_{ij})_{\sym}x^{\frac{1}{2}} )_{\sym}.
$$
Expand $2(\fL_{x^{\frac{1}{2}}}^{-1} \sigma_x(E_{ij})_{\sym})(E_{ij})_{\sym}x^{\frac{1}{2}}=VRV^{\sfT}$ with the summation convention, where we use \cref{eq:traceManipulation} to remove $\sym$ then $V$
$$\begin{gathered}
R = 2\{(V^{\sfT}(VDV^{\sfT}) (E_{ij})_{\sym}(VDV^{\sfT})V)\odot\hat{D} \}V^{\sfT}(E_{ij})_{\sym}VD\\
=\{(DE_{ij}D)\odot\hat{D} \}(E_{ij}+E_{ji})D
=(E_{ij}\frac{\beta_i\beta_j}{\beta_i+\beta_j} (\beta_jE_{ij}+\beta_i E_{ji})\\
=\frac{1}{2}\beta_i^2E_{ii} + \frac{\beta_i^2\beta_j}{\beta_i+\beta_j}E_{ii}.
\end{gathered}$$
From here, we get the expression for $S(X_t)$ and \cref{eq:SdStratAlt}.
\end{proof}  
\subsection{Stiefel manifolds}\label{subsec:stiefel}
For positive integers $p <n$, consider the manifold $\St{n}{p}\subset \cE=\R^{n\times p}$ of orthogonal matrices $Y\in \R^{n\times p}$, $Y^{\sfT}Y = \dI_p$. We equip $\cE$ with the base inner product $\langle \omega_1, \omega_2\rangle_{\cE} = \Tr\omega_1^{\sfT}\omega_2 $. For $\alpha_0, \alpha_1 > 0$, define the metric operator
\begin{equation}\sfg:\omega \mapsto \alpha_0(\dI_n-YY^{\sfT})\omega + \alpha_1 YY^{\sfT}\omega
=\alpha_0K_0\omega + \alpha_1K_1\omega\label{eq:stiefmetric}
\end{equation}
and the associated metric $\langle \omega,\omega\rangle_{\sfg} = \alpha_0\Tr\omega^{\sfT} K_0\omega + \alpha_1\Tr\omega^{\sfT}K_1\omega$, where $K_0 = \dI_n-YY^{\sfT}, K_1 = YY^{\sfT}$. From \cite{ExtCurveStiefel,NguyenOperator}, $\sfg^{-1} \omega = \alpha_0^{-1}K_0\omega + \alpha_1^{-1} K_1\omega$ and
\begin{gather}
  \Pi\omega = \omega - Y(Y^{\sfT}\omega)_{\sym},\\
  \Gamma(Y, \xi, \eta) = Y(\xi^{\sfT}\eta )_{\sym} +2\frac{\alpha_0 - \alpha_1}{\alpha_0}K_0(\xi\eta^{\sfT})_{\sym}Y.\label{eq:stiefelGamma}
\end{gather}
Write $\omega = K_0\omega + K_1\omega$, then $\Pi (K_0\omega) = K_0\omega$, $\Pi (K_1\omega) = Y(Y^{\sfT}\omega)_{\asym}$. Note $\Tr K_1= \Tr Y^{\sfT}Y = p, \Tr K_0 = \Tr (\dI_n-K_1) = n-p$. To evaluate $\ttA$, we compute the sum $\Gamma(E_{ij}, \Pi\sfg^{-1}E_{ij})$ in parts (with Einstein's summation convention)
$$\begin{gathered} E_{ij}^{\sfT}(\Pi \sfg^{-1}E_{ij}) = E_{ij}^{\sfT}(\alpha_0^{-1}K_0E_{ij} + \alpha_1^{-1}Y(Y^{\sfT}E_{ij})_{\asym})\\
  =\alpha_0^{-1}E_{ij}^{\sfT}K_0E_{ij} + \alpha_1^{-1}E_{ij}^{\sfT}Y(Y^{\sfT}E_{ij})_{\asym}\\
  =\frac{(K_0)_{ii}}{\alpha_0}E_{jj} + \frac{1}{2\alpha_1}(E_{ij}^{\sfT}YY^{\sfT}E_{ij}-E_{ij}^{\sfT}YE_{ji}Y)   \\
  =\frac{\Tr K_0}{\alpha_0}\dI_p + \frac{1}{2\alpha_1}(\Tr(YY^{\sfT})\dI_p - Y^{\sfT}Y)
=(\frac{n-p}{\alpha_0} + \frac{p-1}{2\alpha_1})\dI_p,
\end{gathered}$$
$$\begin{gathered}
  E_{ij}(\Pi \sfg^{-1}E_{ij})^{\sfT}_{\sym} =
   E_{ij}(\alpha_0^{-1}E_{ij}^{\sfT}K_0 - \alpha_1^{-1}(Y^{\sfT}E_{ij})_{\asym}Y^{\sfT})\\
   =\frac{p}{\alpha_0}K_0 - \frac{1}{2\alpha_1}(E_{ij}Y^{\sfT}E_{ij}Y^{\sfT} - E_{ij}E_{ij}^{\sfT}YY^{\sfT})
=\frac{p}{\alpha_0} K_0 + \frac{p-1}{2\alpha_1}YY^{\sfT}
\end{gathered}$$
where we use \cref{eq:useful}. Thus, $K_0(E_{ij}(\Pi \sfg^{-1}E_{ij})^{\sfT}_{\sym})Y=0$ and
$$\ttA = - \sum_{ij}\Gamma(Y; E_{ij},  \Pi \sfg^{-1}E_{ij})=
-(\frac{n-p}{\alpha_0} + \frac{p-1}{2\alpha_1})Y.
$$
and the Laplacian is
\begin{equation}-(\frac{n-p}{\alpha_0} + \frac{p-1}{2\alpha_1})\Tr Y^{\sfT}\egrad_f +
  (\ehess_f (\Pi\sfg^{-1}E_{ij}))_{ij}.
\end{equation}
For the Brownian motion, we can take $\sigma\omega=\sfg^{-\frac{1}{2}}\omega =\alpha_0^{-\frac{1}{2}}K_0\omega + \alpha_1^{-\frac{1}{2}} K_1\omega$ with an ambient process $W_t$ in $\R^{n\times p}$
%dY_t = -(\frac{n-p}{2\alpha_0} + \frac{p-1}{4\alpha_1})Y_tdt + \Pi(Y_t)\{  \alpha_0^{-\frac{1}{2}}K_0 dW_t + \alpha_1^{-\frac{1}{2}} K_1 dW_t\}.
\begin{equation}dY_t = -(\frac{n-p}{2\alpha_0} + \frac{p-1}{4\alpha_1})Y_tdt + \{  \alpha_0^{-\frac{1}{2}}(\dI_n -Y_tY_t^{\sfT}) dW_t + \alpha_1^{-\frac{1}{2}} Y_t(Y_t^{\sfT} dW_t)_{\asym}\}.
\end{equation}  
Note the formula reduces to the sphere case when $p =1, \alpha_0 = 1$.

\subsection{Grassmann manifolds}Consider the manifold $\Gr{n}{p}$, the quotient of the Stiefel manifold $\St{n}{p}$ by the action of $\SO(p)$ on the right. This is the configuration space of $p$-dimension subspaces of $\R^n$. Recall the tangent space at $Y\in\St{n}{p}$ consists of matrices $\eta\in\R^{n\times p}$ satisfying $\sym(Y^{\sfT}\eta) = 0$. In the lift to $\St{n}{p}$, with the metric defined by $\alpha_0 = \alpha_1 = 1$, the horizontal space consists of tangent vectors $\eta$ satisfying $Y^{\sfT}\eta=0$, the horizontal projection is $\rH\omega = (\dI_n -YY^{\sfT})\omega$ and the Christoffel function for two horizontal vectors $\xi, \eta$ is evaluated as $\Gamma^{\cH}(\xi, \eta) = Y\xi^{\sfT}\eta$ \cite[eq. 2.71]{Edelman_1999}. From here
$$ \Gamma^{\cH}(E_{ij}, \rH\sfg^{-1}E_{ij}) = YE_{ij}^{\sfT}(\dI_n-YY^{\sfT})E_{ij}
= Y\Tr(\dI-YY^{\sfT}) = (n-p)Y.
$$
Using $\sigma = \dI_{\cE}$, to check the mean curvature condition on the submanifold $Y\SO(p)\subset \St{n}{p}$ at $Y\in \St{n}{p}$, for $A, B\in\oo(p)$, and we write $YA, YB$ for the corresponding vector fields. From \cref{eq:stiefelGamma}
$$\nabla_{YA}YB = YAB + Y(-AY^{\sfT}YB)_{\sym}=\frac{1}{2}Y[A,B]$$
is tangent to the orbit and we have the condition $\ttH\nabla_{\ttZ_i}\ttZ_i=0$ in \cref{sec:hBrown}.

The lifted Laplacian, and horizontal Brownian motion equations are
\begin{gather}\Delta^{\cH}_f = -(n-p)\Tr Y^{\sfT}\egrad_f + (\ehess_f(\dI-YY^{\sfT})E_{ij})_{ij},\\
  dY_t = -\frac{n-p}{2}Y_t + (\dI_n-YY^{\sfT})dW_t^{[n\times p]},\\
  dY_t =  (\dI_n-YY^{\sfT})\circ dW_t^{[n\times p]}.
\end{gather}
where for the last equation, we use (with Einstein's summation)
$$\rD_{\rH E_{ij}} YY^{\sfT}E_{ij} =  (\dI_n-YY^{\sfT})E_{ij}Y^{\sfT}E_{ij} + Y((\dI_n-YY^{\sfT}) E_{ij})^{\sfT}E_{ij}=(n-p)Y.$$
noting $(\dI_n-YY^{\sfT})E_{ij}Y^{\sfT}E_{ij}=(\dI_n-YY^{\sfT})Y=0$ by \cref{eq:useful}. In \cite{Baudoin}, the authors consider the complex Grassmann manifold, using a different method. We will leave it to a follow-up work to derive the It{\=o} and Stratonovich versions above for the complex Grassmann.
\section{Numerical methods: deterministic and stochastic projection methods}\label{sec:numeric}
We now discuss numerical methods to solve SDE on $\R^n$ with an invariant manifold $\cM$ (the SDE are called conservative). In the ODE literature, it is well known \cite[section VII.2]{HairerWarner}, \cite[section 5.3]{SoellnerFuhrer}, that a numerical scheme on $\R^n$, when applied to a conservative ODE together with a {\it projection} will in general preserve the order of accuracy. Projection in the numerical analysis literature is just the {\it nearest point retraction} in \cref{sec:tubular}. In the previous sections, we used the term projection for a linear projection to the tangent space of a manifold point. To distinguish, while we still use the term {\it projection method}, projections will be used only for these linear maps, otherwise, we will use the term retraction. While we have seen \footnote{See \href{https://mtaylor.web.unc.edu/wp-content/uploads/sites/16915/2020/08/expomap.pdf}{https://mtaylor.web.unc.edu/wp-content/uploads/sites/16915/2020/08/expomap.pdf}, (Professor M.E. Taylor's note on geodesics)} the projection method applied to the geodesic equation and several examples of projection method for SDE mentioned in \cref{sec:previous}, the linkage with SOO clarifies the condition when the stochastic version applies. We will present the results in \cref{subsec:projstochastic}.

It is known \cite[corollary 2.3]{SHSW} that we can simulate geometric random walks using a second-order tangent retraction (see \cref{eq:geo} below) with respect to the Levi-Civita connection, but there are only a few known examples of such retractions.  Inspired by the Taylor method applied to \cref{cor:ANPT}, in \cref{theo:geodesic}, we construct {\it a second-order tangent retraction} from any $\cE$-tubular retractions, for {\it any torsion-free connection}. In addition, we also introduce the {\it retractive Euler-Maruyama} method in \cref{sec:retractivesim}.

We show in \cref{alg:Simulation} the basic simulation framework for a $\cM$-valued process $X_t$ described by either the It{\=o} ($dX_t = \mu dt + \sigmam dW_t$) or Stratonovich ($dX_t = \mu dt + \sigmam \circ dW_t$) form on the ambient space $\cE$ with an invariant manifold $\cM$. The rest of the paper will discuss different integrators' $\fF$'s, and will provide numerical experiments using these integrators.
\begin{algorithm}
 \label{alg:RQI}
 \begin{algorithmic}[1]
 \State{Input: $X_0\in \cM$, an integrator $\fF$, $T, n_{div}, n_{path}$ , functions $g, f$, a SDE of It{\=o} or Stratonovich type for $X$ with drift $\mu$ and stochastic map $\sigmam$.}\;
 \State{$h\gets \frac{T}{n_{div}}$}\;
 \For{$i=0,\cdots, n_{path}-1$}\;
 \State $X_{i,0} \gets X_0$\;
 \State $s_i \gets g(X_{i,0}, 0)$\;
 \For{$j=1,\cdots, n_{div}-1$}\;
 %\State Solve for $\zeta, \nu$ in $\bLx(\bx_i, \blbd_i)\begin{bmatrix} \zeta & \nu \end{bmatrix} = \begin{bmatrix}\bLlbd(\bx_i, \lambda_i)&  \bL(\bx_i, \lambda_i)\end{bmatrix}$\;
 \State Sample $\Delta\sim N(0, I_k)$
 \State Compute $X_{i,j+1}\gets \fF(\mu , \sigmam, X_{ij}, jh,\Delta, h)$\;\Comment{$X_{i,j+1}\in \cM$}
 \State Compute $s_i \gets s_i + g(X_{i,j+1}, jh)$\;
 \EndFor
 \State Compute $s_i \gets s_i + f(X_T, T)$\;
 \EndFor
 \State{ Return: The collection $\{s_i\}_{i=1}^{n_{path}}$. }\Comment{To compute statistics}
 \end{algorithmic}
 \caption{Simulating the distribution of $\int_0^Tg(X_s, s)ds + f(X_{T}, T)$ for a manifold valued-process $X_t$ satisfying an It{\=o} or Stratonovich SDE.}
 \label{alg:Simulation}
 \end{algorithm}

\subsection{Projection method and geodesic equation}\label{sec:projgeo}
Consider a manifold $\cM\subset \cE$, where $\cE$ is a vector space. Let $D_{\cM}$ be a {\it tubular neighborhood} of $\cM$ (\cref{sec:tubular}). Consider the differential equation
\begin{equation}y^{(r)} = F(t; y, \dot{y},\cdots, y^{(r-1)})\label{eq:main}
\end{equation}  
such that if the initial values of $y$ and its derivatives are on a (higher) tangent bundle of $\cM$, then the solution of the equation is in $\cM$ ($\cM$ is an {\it invariant manifold}). If there is a method $\Psi(t,\cdots, h)$ on $D_{\cM}\subset\cE$ producing the next iteration point on $D_{\cM}$, (here, $\cdots$ includes current and past iteration data of $y$ and its derivatives), and $\Psi$ is of order $p$, that means $\Psi(t,\cdots, h)-y(t+h)$ is $O(h^{p+1})$, then since $y(t+h)\in\cM$
\begin{equation}|\Psi(t,\cdots, h) - \piperp(\Psi(t,\cdots, h) )|\leq |\Psi(t,\cdots, h)-y(t+h)  |.\label{eq:higherOrder}\end{equation}
Thus, $\lvert \Psi(t,\cdots, h) - \piperp(\Psi(t,\cdots, h) )\rvert$ is also $O(h^{p+1})$, hence
$\lvert \piperp(\Psi(t,\cdots, h))-y(t+h)\rvert$ is $O(h^{p+1})$ by the triangle inequality, and $\piperp(\Psi(t,\cdots, h))\in\cM$.

For an ANP-retraction (\cref{sec:tubular}) $\pi$, $\lvert\Psi(t,\cdots, h) - \pi(\Psi(t,\cdots, h) )\rvert$ is also $O(h^{p+1})$, and $\pi(\Psi(t,\cdots, h))$ is an $O(h^{p+1})$ method by the triangle inequality. Here, we do not need to know details about $\Psi$. The main requirement is that if $\cM$ is an invariant manifold of \cref{eq:main} then the projected method will have the same order of accuracy as the original method. This projection method \cite{HairerWarner,SoellnerFuhrer} is well-known in the context of constrained differential equations.
\begin{corollary}Let $\Gamma$ be a Christoffel function of the Levi-Civita connection of a metric $\sfg$ on $\cM$. Assume $\Gamma$ extends to a function from $D_{\cM}$ to $\Lin(\cE\otimes\cE, \cE)$, the space of $\cE$-valued bilinear form on $\cE$. Then an iterative method with order of accuracy $p$ applied to the equation
  \begin{equation}
    \frac{d}{dt}\begin{bmatrix}x\\v \end{bmatrix} = \begin{bmatrix}v\\-\Gamma(x; v, v)\end{bmatrix}
    \label{eq:geo}
  \end{equation}
 on $\cE^2$ ANP-retracted to $T\cM$ also has an order of accuracy $p$.\label{cor:ANPT}
\end{corollary}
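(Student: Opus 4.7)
The plan is to reduce the claim to the general projection-method principle articulated around \cref{eq:higherOrder}, but applied to the inclusion $T\cM\subset\cE^2$ rather than $\cM\subset\cE$. There are two ingredients to verify: (i) that $T\cM$ is an invariant manifold of the ODE system \cref{eq:geo} viewed as an equation on the ambient space $\cE^2$, and (ii) that the ANP retraction to $T\cM$ exists and has the Lipschitz-type bound needed to convert an order-$p$ ambient integrator into an order-$p$ integrator on $T\cM$.

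For (i), I would start from an initial condition $(x_0,v_0)\in T\cM$ and let $(x(t),v(t))$ be the ambient solution of \cref{eq:geo}. Eliminating $v$ gives $\ddot x+\Gamma(x;\dot x,\dot x)=0$, which is the geodesic equation of the affine connection whose Christoffel function is $\Gamma$. Since $\Gamma$ is assumed to be (an extension of) the Christoffel function of the Levi-Civita connection on $\cM$ and the initial velocity lies in $T_{x_0}\cM$, uniqueness of geodesics together with the standard fact that Levi-Civita geodesics stay on the manifold with velocity tangent to it (equivalently, $(v, vv^{\sfT})$ satisfies the second-order tangent constraint \cref{eq:secondtangent} at each $x(t)\in\cM$) gives $(x(t),v(t))\in T\cM$ for all $t$ in the domain of existence. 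The extension of $\Gamma$ outside of tangent inputs plays no role here, since along the geodesic $\Gamma$ is only evaluated on $(v,v)$ with $v$ tangent.

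For (ii), I would invoke the noncompact version of the tubular neighborhood theorem from \cref{sec:tubular} applied to $T\cM\subset\cE^2$: an ANP retraction $\pi:D_{T\cM}\to T\cM$ exists and satisfies $|z-\pi(z)|\le C|z-\piperp(z)|$ for $z$ close enough to $T\cM$. Let $\Psi$ be the given order-$p$ integrator on $\cE^2$ for \cref{eq:geo}, so $|\Psi(t,\cdots,h)-(x(t+h),v(t+h))|=O(h^{p+1})$. Because $(x(t+h),v(t+h))\in T\cM$, the distance from $\Psi(t,\cdots,h)$ to $T\cM$ is $O(h^{p+1})$; hence $|\Psi-\pi(\Psi)|\le C|\Psi-\piperp(\Psi)|=O(h^{p+1})$. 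The triangle inequality then gives $|\pi(\Psi(t,\cdots,h))-(x(t+h),v(t+h))|=O(h^{p+1})$, proving the retracted scheme still has order $p$. This is exactly the argument recorded in \cref{eq:higherOrder} and the paragraph following it, lifted from $\cM\subset\cE$ to $T\cM\subset\cE^2$.

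The only non-routine point is the invariance of $T\cM$ under \cref{eq:geo}; everything else is a direct transcription of the general projection-method principle. That step, however, is immediate once one recognizes that \cref{eq:geo} is literally the geodesic equation of the Levi-Civita connection written in embedded coordinates, so the main work is simply citing (or, if preferred, re-deriving from \cref{eq:secondtangent}) that Levi-Civita geodesics preserve the tangent bundle.
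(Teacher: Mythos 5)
Your proposal is correct and is essentially the argument the paper intends: the corollary is stated as an immediate consequence of the projection-method discussion around \cref{eq:higherOrder}, with the roles of $\cM\subset\cE$ played by $T\cM\subset\cE^2$, and the only content to supply is that $T\cM$ is an invariant manifold of \cref{eq:geo} because a Levi-Civita geodesic with tangent initial data solves the ambient system and stays in $T\cM$, which by ODE uniqueness identifies it with the ambient solution. You have made explicit exactly the steps the paper leaves implicit, so there is nothing to correct.
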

For example, we get a method of order 4 by projecting the Runge-Kutta method $RK4$.

Given a (torsion-free) connection $\nabla$ on $\cM$, with Christoffel function $\Gamma$, a {\it tangent retraction} (\cref{sec:tubular}) with respect to $\nabla$ is of {\it second-order} if
\begin{equation}\frac{\nabla}{dt}\{\frac{d}{dt}(t\mapsto \fR(x, tv)\}\vert_{t=0} =\frac{d}{dt^2}\{t\mapsto\fR(x, tv)\}\vert_{t=0} + \Gamma(x, v, v) = 0.\label{eq:secondRetraction}
\end{equation}
\begin{theorem}1. Let $\pi$ be an $\cE$-tubular retraction of class $C^3$. Then its differential $\pi'$ at $x\in \cM$ maps $\cE$ to $T_x\cM\subset \cE$. Considered as a map from $\cE$ to $\cE$, $\pi'(x)$ is a linear projection from $\cE$ onto $T_x\cM$, $\pi'(x))^2=\pi'(x)$ for $x\in\cM$. In particular, $\piperp'(x) = \Pi^{\cE}(x)$, the $\cE$-orthogonal projection to $T_x\cM$.\hfill\break
  2. With $\pi$ above, assume $\nabla$ is a torsion-free connection with $C^1$-Christoffel function $\Gamma$. Then $\fR$ below is defined in a tubular neighborhood of $\cM\subset T\cM$ 
\begin{equation}
\fR(x, v) := \pi(x + v -\frac{1}{2}\pi'(x)\Gamma(x; v, v)).\label{eq:geoiter2}
\end{equation}
In that neighborhood, $\fR$ is a second-order tangent retraction for $\nabla$.\label{theo:geodesic}
\end{theorem}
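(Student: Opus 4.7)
My plan is to prove Part 1 by exploiting the identity $\pi|_\cM = \mathrm{id}_\cM$ in two complementary directions, then prove Part 2 by Taylor-expanding $t \mapsto \fR(x, tv)$ and identifying the second derivative via a $\nabla$-geodesic.

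For Part 1, I would first show $\pi'(x)\cE \subset T_x\cM$: for any $w \in \cE$ the curve $t \mapsto \pi(x + tw)$ lies in $\cM$ for small $t$, so its velocity $\pi'(x) w$ at $t = 0$ is tangent to $\cM$ at $\pi(x) = x$. Next, for $v \in T_x\cM$, choose a smooth curve $\gamma$ in $\cM$ with $\gamma(0) = x$ and $\dot\gamma(0) = v$; since $\pi \circ \gamma = \gamma$, differentiating at $0$ gives $\pi'(x)v = v$, so $\pi'(x)$ restricts to the identity on $T_x\cM$. Combining the two, $\pi'(x)$ is a linear projection onto $T_x\cM$ with $(\pi'(x))^2 = \pi'(x)$. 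For the nearest-point retraction $\piperp$, if $w$ is $\cE$-orthogonal to $T_x\cM$ then $x$ is itself the nearest point of $\cM$ to $x + tw$ for small $t$, so $\piperp(x + tw) = x$ and hence $\piperp'(x) w = 0$; together with the identity on $T_x\cM$ this forces $\piperp'(x) = \Pi^{\cE}(x)$.

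For Part 2, well-definedness of $\fR$ in a tubular neighborhood of the zero section of $T\cM$ follows from openness of $D_\cM$ and continuity of the expression in $(x, v)$ at $v = 0$. Write $\gamma_v(t) := x + tv - \tfrac{t^2}{2}\pi'(x)\Gamma(x; v, v)$, so that $\fR(x, tv) = \pi(\gamma_v(t))$. The condition $\fR(x, 0) = x$ is immediate, and the first-order condition $\frac{d}{dt}\fR(x, tv)\big|_{t=0} = \pi'(x)\dot\gamma_v(0) = \pi'(x) v = v$ follows from Part 1 and $v \in T_x\cM$. For the second-order condition \cref{eq:secondRetraction}, the chain rule gives
$$\frac{d^2}{dt^2}\fR(x, tv)\Big|_{t=0} = \pi''(x)(v, v) + \pi'(x)\ddot\gamma_v(0) = \pi''(x)(v, v) - \pi'(x)\Gamma(x; v, v),$$
so it suffices to establish the identity $\pi''(x)(v, v) = -(\dI_\cE - \pi'(x))\Gamma(x; v, v)$ for $v \in T_x\cM$.

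The main obstacle, and the key step, is this last identity, which ties the intrinsic bilinear form $\pi''(x)$ of the embedding to the Christoffel function of an arbitrary torsion-free connection. My approach is intrinsic: pick a $\nabla$-geodesic $\beta$ with $\beta(0) = x$ and $\dot\beta(0) = v$, which exists locally thanks to the $C^1$ regularity of $\Gamma$. Since $\beta$ lies in $\cM$, differentiating $\pi \circ \beta = \beta$ twice at $0$ and using Part 1 yields $\pi''(x)(v, v) + \pi'(x)\ddot\beta(0) = \ddot\beta(0)$, i.e.\ $\pi''(x)(v, v) = (\dI_\cE - \pi'(x))\ddot\beta(0)$. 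The embedded geodesic equation $\ddot\beta(t) + \Gamma(\beta(t); \dot\beta(t), \dot\beta(t)) = 0$, obtained from $\nabla_{\dot\beta}\dot\beta = 0$ via \cref{def:Levi}, gives $\ddot\beta(0) = -\Gamma(x; v, v)$, and the identity follows. Substituting into the display above, the two copies of $\pi'(x)\Gamma(x;v,v)$ cancel and we are left with $-\Gamma(x; v, v)$, which is exactly the second-order condition, completing the proof.
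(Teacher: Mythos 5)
Your proposal is correct. Part 1 is essentially the paper's argument (the paper derives idempotency from $\pi\circ\pi=\pi$, you derive it from ``image in $T_x\cM$'' plus ``identity on $T_x\cM$'', which is equivalent), and you additionally supply a justification for $\piperp'(x)=\Pi^{\cE}(x)$ that the paper only asserts. For Part 2 you take a genuinely different route at the key step. Both proofs reduce to showing that $\pi^{(2)}(x;v,v)+(\dI_{\cE}-\pi'(x))\Gamma(x;v,v)$ vanishes. The paper argues in two stages: it first observes that this expression, being $\frac{\nabla}{dt}\dot y(t)\vert_{t=0}$ for a curve on $\cM$, is a tangent vector, so applying $\pi'(x)$ loses nothing and kills the $(\dI_{\cE}-\pi'(x))\Gamma$ term; it then proves $\pi'(x)\pi^{(2)}(x;v,v)=0$ by differentiating the idempotency relation $\pi'(y)^2v=\pi'(y)v$ along a tangent direction (a Weingarten-type lemma). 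You instead compute $\pi^{(2)}(x;v,v)$ outright by differentiating $\pi\circ\beta=\beta$ twice along a $\nabla$-geodesic $\beta$, obtaining $\pi^{(2)}(x;v,v)=(\dI_{\cE}-\pi'(x))\ddot\beta(0)=-(\dI_{\cE}-\pi'(x))\Gamma(x;v,v)$ directly. Your route is shorter and makes transparent why the answer is independent of the chosen bilinear extension of $\Gamma$, at the cost of invoking local existence of geodesics (harmless under the stated $C^1$ hypothesis on $\Gamma$, and not circular, since the geodesic appears only in the proof, not in the construction of $\fR$). The paper's route is purely algebraic in $\pi$ and avoids any appeal to ODE theory, and its Weingarten-type identity $\pi'(x)\pi^{(2)}(x;v,v)=0$ is a reusable fact about tubular retractions in its own right.
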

\begin{proof} Since $\pi(x)\in\cM$, $\pi\circ\pi(x) = \pi(x)$ implies $\pi'(\pi(x))\pi'(x) = \pi'(x)$ for $x\in D_{\cM}$ by the chain rule. Evaluate at $x\in\cM$, this shows $\pi'(x)^2=\pi'(x)$. Differentiate $\pi(x)=x$ in direction $v\in T_x\cM$, we get $\pi'(x)v=v$.

With the assumed smoothness of $\pi$, for $|v|$ sufficiently small, $\fR$ in \cref{eq:geoiter2} is well-defined. For \cref{eq:secondRetraction}, let $y(t) = \fR(x, tv) = \pi(x + tv -\frac{t^2}{2}\pi'(x)\Gamma(x; v, v))$,
  $$\begin{gathered}\dot{y}(t) =  \pi'\{x + tv -\frac{t^2}{2}\pi'(x)\Gamma(x; v, v)\}(v-t\pi'(x)\Gamma(x; v, v)).
\end{gathered}$$
Thus, $\dot{y}(0) = \pi'(x)v = v$, or $\fR$ is a tangent retraction. Next,
$$\begin{gathered}\ddot{y}(t) = \pi^{(2)}\{x + tv -\frac{t^2}{2}\pi'(x)\Gamma(x; v, v);
v-t\pi'(x)\Gamma(x; v, v), v-t\pi'(x)\Gamma(x; v, v) \}\\
  -\pi'\{x + tv -\frac{t^2}{2}\pi'(x)\Gamma(x; v, v)\}(\pi'(x)\Gamma(x; v, v)),\\
  \Rightarrow \ddot{y}(0) = \pi^{(2)}(x ; v, v ) -\pi'(x)\Gamma(x; v, v)
\end{gathered}
$$
as $\pi'(x)^2=\pi'(x)$. Hence, the left-hand side of \cref{eq:secondRetraction} is
$$\left(\ddot{y}(t) +\Gamma(y(t); \dot{y}(t), \dot{y}(t))\right)\vert_{t=0} =
\pi^{(2)}(x ; v, v ) +(\dI_{\cE}-\pi'(x))\Gamma(x; v, v).$$
Note that since $\nabla$ is a connection, the left-hand side of the above is a tangent vector in $T_x\cM$. Since $\pi'(x)$ is the identity on $T_x\cM$, apply $\pi'(x)$ to both sides and use $\pi'(x)^2=\pi'(x)$, we have
$$\left(\ddot{y}(t) +\Gamma(y(t); \dot{y}(t), \dot{y}(t))\right)\vert_{t=0} =
\pi'(x)\pi^{(2)}(x ; v, v ).$$
It remains to show $\pi'(x)\pi^{(2)}(x ; v, v )=0$ (this is the Weingarten lemma applied to the linear projection $\pi'(x)$). Since $\pi'(y)^2v=\pi'(y)v$ holds {\it for all} $y\in \cM$, differentiate this equality in the {\it tangent direction} $v\in T_x\cM$
$$\pi^{(2)}(x; v, \pi'(x)v) + \pi'(x)\pi^{(2)}(x; v, v)= \pi^{(2)}(x; v, v).
$$
Since $\pi'(x)v=v$, canceling the first term with the right-hand side, we get $\pi'(x)\pi^{(2)}(x; v, v)=0$. This proves $\fR$ is a second-order retraction.
\end{proof}
In particular, for the embedded metric $\sfg = \dI_n$ and $\pi=\piperp$, then $\piperp'(x)=\Pi^{\cE}(x)$ and the Christoffel function of the Levi-Civita connection $\Gamma(x; v, v) = -(\Pi^{\cE})'(x; v)v$ is normal to $T_x\cM$ \cite{AbM}, thus, 
\cref{eq:geoiter2} reduces to $\fR(x, v) = \piperp(x + v)$. This retraction is known to be of second-order from \cite{AbM} and is used in \cite{SHSW}.

For a simple new example, consider the constrained hypersurface $\cM\subset\R^n$ defined by $c(x)-1=0$ for a scalar positively homogeneous function $c$ of order $\alpha\neq 0$, $c(tx) = t^{\alpha}c(x)$ for $t> 0$. Even for an ellipsoid ($\alpha=2$), $\piperp$ is in general complicated to solve. Consider the rescaling retraction $\pi(x)=\pi_s(x)=c(x)^{-\frac{1}{\alpha}}x$, defined in a neighborhood of $\cM$ where $c(x)>0$. For $x\in\cM$
\begin{equation}\pi'(x)\omega = -\frac{c'(x)\omega}{\alpha}c(x)^{-\frac{1}{\alpha}-1}x + c(x)^{-\frac{1}{\alpha}}\omega= \omega -\frac{c'(x)\omega}{\alpha}x,\quad\omega\in\R^n.\end{equation}
We can verify $\pi'(x)^2=\pi'(x)$. We can use \cref{eq:geoiter2} with $\pi_s$ for any metric on $\cM$, in particular, for $\sfg=\dI_n$ with $\Gamma(x; v, v) = \frac{c^{(2)}(x; v, v)}{|c'(x)|^2}c'(x)^{\sfT}$ \cite{Edelman_1999}.

For the group $\cM=\SL(N)$ of matrices of determinant $1$, a retraction from a neighborhood of matrices in $\R^N$ of positive determinant is the rescaled $\pi_{\det}:A\mapsto \det(A)^{-\frac{1}{N}}A$. We have for $A\in\SL(N)$, $\omega\in \cE=\R^{N\times N}$
$$\pi'(A)\omega = \lim_{t\to 0}\frac{d}{dt}\{\det(A+t\omega)^{-\frac{1}{N}}(A+t\omega)\} = \omega-\frac{1}{N}\Tr (A^{-1}\omega)A$$
by the Jacobi's determinant's formula and $\det(A) = 1$. Note, the projection $x(x^{-1}\omega)_{\pg}$, for $\pg(x^{-1}\omega) = x^{-1}\omega - \frac{\Tr(x^{-1}\omega)}{N}\dI_N$ in \cref{eq:Gproj} is exactly $\pi'(x)$.

For the Stiefel manifold in \cref{subsec:stiefel}, it is known \cite{AbM} $\piperp$ is given by the polar decomposition, thus, by \cref{eq:geoiter2}, a second-order retraction for the metric \cref{eq:stiefmetric} at $(Y,\xi)\in T\cM$ is given by the orthogonal component in the polar decomposition of
$$\begin{gathered}Y + \xi -\frac{1}{2}\Pi(Y)\left(Y(\xi^{\sfT}\xi )_{\sym} +2\frac{\alpha_0 - \alpha_1}{\alpha_0}K_0(\xi\xi^{\sfT})_{\sym}Y\right)\\
=Y + \xi -\frac{\alpha_0 - \alpha_1}{\alpha_0}(\xi-Y(Y^{\sfT}\xi))\xi^{\sfT}Y.
\end{gathered}$$
We can use these retractions to simulate a random walk approximation of a Riemannian Brownian motions \cite{SHSW} (where $\sigmam = \Pi\sigma$) where in \cref{alg:Simulation}, the iteration using a second-order retraction is given by
\begin{equation} X_{i,j+1}\gets \fR(X_{ij},  (\frac{h\dim{\cM}}{ \langle \sigmam(X_{ij})\Delta, \sigmam(X_{ij})\Delta\rangle_{\sfg(X_{ij})}})^{\frac{1}{2}}\sigmam(X_{ij})\Delta).
  \end{equation}
\subsection{The stochastic projection method}\label{subsec:projstochastic}
In \cite{ZhouZhangHongSong}, the authors studied the projection method for SDEs using the fundamental theorem of stochastic approximation in \cite{MilsteinTre}. The result is discussed in detail for one constraint. We will explain and extend it below.

Consider an SDE on $\cE$ with $\cM\subset\cE$ an invariant manifold, as in \cref{prop:conservedIto}, or in an equivalent Stratonovich form. Let $X_{t, x}(T)$ be the (exact) solution of this equation (in $\R^n$)with $X(t) = x\in\R^n$ at $T\geq t\in\R$. Consider an one-step approximation $\bar{X}_{t,x}(t+h) = x+A(t, x, h, W(\theta) - W(t))$ for $t\leq \theta\leq t+h$ on $\cE=\R^n$, where $A$ is continuous in $\R^n$. Assume (see \cite[theorem 1.1.1]{MilsteinTre}) $\bar{X}_{t,x}$ is of order of accuracy $p_1$ for the expectation of deviation and order of accuracy $p_2$ for the mean-square deviation, that is
\begin{gather}\lvert \E(X_{t, x}(t+h) - \bar{X}_{t, x}(t+h)) \rvert \leq K_1(1+|x|^2)^{\frac{1}{2}}h^{p_1}\label{eq:esp1}\\
\lvert [\E\lvert X_{t, x}(t+h) - \bar{X}_{t, x}(t+h) \rvert^2]^{\frac{1}{2}} \leq K_2(1+|x|^2)^{\frac{1}{2}}h^{p_2}  \label{eq:esp2}
\end{gather}
where $p_1\geq \frac{1}{2}, p_1\geq p_2+\frac{1}{2}$. Assume the global Lipschitz condition
\begin{equation}\lvert b(t, x) - b(t, y)\rvert + \sum_{i=1}^m \lvert \sigma_i(t, x) - \sigma_i(t, y)\rvert \leq K_0|x-y|\label{eq:lips}
\end{equation}
for $x, y\in \R^n$, $t\in[t_0, T]$ (the time interval under consideration), and $\sigma_i$'s are columns of $\sigma$, then the fundamental theorem of the mean-square order of convergence \cite[theorem 1.1.1]{MilsteinTre}, \cite[theorem 3.3]{ZhouZhangHongSong} shows for any $N$ and $k=0,1,\cdots , N$, there is a bound on the $k$-step approximation
\begin{equation}
\lvert [\E\lvert X_{t_0, X_0}(t_k) - \bar{X}_{t_0, X_0}(t_k) \rvert^2]^{\frac{1}{2}} \leq K(1+\E|X_0|^2)^{\frac{1}{2}}h^{p_2-\frac{1}{2}}.  
\end{equation}
The approximation $\bar{X}_{t,x}$ is in $\R^n$. Similar to the ODE case, but under further assumptions, the projection method could be modified to a method on $\R^n$ to an approximation in $\cM$ with the same expectation and mean-square accuracy. Thus, the $k$-steps bound still applies for the projected method on $\cM$. We need the following result on $\R^n$ (theorem 3.4 in \cite{ZhouZhangHongSong}, which cited \cite{MilsteinTre}. As before, the proof follows from the triangle inequality.)
\begin{proposition}Let the one-step approximation $\bar{X}_{t, x}$ satisfy the condition of the fundamental theorem \cite[theorem 1.1.1]{MilsteinTre}. Suppose that another approximation $\hat{X}_{t,x}$ is such that
\begin{gather}\lvert \E(\hat{X}_{t, x}(t+h) - \bar{X}_{t, x}(t+h)) \rvert \leq K_3(1+|x|^2)^{\frac{1}{2}}h^{p_1}\\
 [\E\lvert \hat{X}_{t, x}(t+h) - \bar{X}_{t, x}(t+h) \rvert^2]^{\frac{1}{2}} \leq K_4(1+|x|^2)^{\frac{1}{2}}h^{p_2}  
\end{gather}
Then the method based on $\hat{X}_{t, x}$ will also have the order of accuracy $p_1$ and $p_2$ for expectation and mean-square deviation. \label{prop:triangl}
\end{proposition}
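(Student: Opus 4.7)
The plan is to derive the bounds for $\hat{X}_{t,x}$ against the exact solution $X_{t,x}$ by inserting $\bar{X}_{t,x}$ as an intermediate and applying the triangle inequality in the appropriate norm. Since $\bar{X}_{t,x}$ already satisfies the fundamental theorem's hypotheses with exponents $p_1, p_2$, and $\hat{X}_{t,x}$ is close to $\bar{X}_{t,x}$ with the same exponents by assumption, the conclusion will follow by combining the constants.

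First, for the expectation bound, I would write
\begin{equation*}
\lvert\E(X_{t,x}(t+h) - \hat{X}_{t,x}(t+h))\rvert \leq \lvert\E(X_{t,x}(t+h) - \bar{X}_{t,x}(t+h))\rvert + \lvert\E(\bar{X}_{t,x}(t+h) - \hat{X}_{t,x}(t+h))\rvert
\end{equation*}
by the triangle inequality in $\R^n$ applied after pulling absolute values inside the expectation. The first term is bounded by $K_1(1+|x|^2)^{1/2}h^{p_1}$ by \cref{eq:esp1}, and the second by $K_3(1+|x|^2)^{1/2}h^{p_1}$ by hypothesis. Hence the sum is bounded by $(K_1+K_3)(1+|x|^2)^{1/2}h^{p_1}$, verifying order $p_1$ for $\hat{X}_{t,x}$.

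Next, for the mean-square bound, I would apply Minkowski's inequality in $L^2(\Omega; \R^n)$:
\begin{equation*}
[\E\lvert X_{t,x}(t+h) - \hat{X}_{t,x}(t+h)\rvert^2]^{1/2} \leq [\E\lvert X_{t,x}(t+h) - \bar{X}_{t,x}(t+h)\rvert^2]^{1/2} + [\E\lvert\bar{X}_{t,x}(t+h) - \hat{X}_{t,x}(t+h)\rvert^2]^{1/2}.
\end{equation*}
By \cref{eq:esp2} and the hypothesis on $\hat{X}_{t,x}$, the right-hand side is at most $(K_2+K_4)(1+|x|^2)^{1/2}h^{p_2}$, giving order $p_2$ for the mean-square deviation.

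Since $\hat{X}_{t,x}$ now satisfies the one-step estimates \cref{eq:esp1}–\cref{eq:esp2} with new constants $K_1+K_3$ and $K_2+K_4$ and the same exponents $p_1, p_2$ (and $p_1\geq 1/2$, $p_1\geq p_2 + 1/2$ are already built into the hypothesis on $\bar{X}_{t,x}$), the fundamental theorem \cite[theorem 1.1.1]{MilsteinTre} applies directly to $\hat{X}_{t,x}$, yielding the same global mean-square order $p_2 - \tfrac{1}{2}$ over $k$ steps. There is no genuine obstacle here: the only thing to verify is that the Lipschitz condition \cref{eq:lips} on the coefficients of the SDE is unaffected (it concerns only $b$ and $\sigma_i$, not the scheme), so the fundamental theorem's hypotheses transfer to $\hat{X}_{t,x}$ intact.
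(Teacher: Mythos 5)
Your proof is correct and is essentially the paper's own argument: insert $\bar{X}_{t,x}$ as an intermediate, apply the triangle inequality (for the expectation) and Minkowski's inequality (for the mean-square deviation), and combine the constants. One tiny phrasing nit: the first displayed inequality follows from linearity of $\E$ plus the triangle inequality on the two vectors $\E(X-\bar{X})$ and $\E(\bar{X}-\hat{X})$, not from ``pulling absolute values inside the expectation,'' but the inequality itself is right and nothing else is affected.
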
  
\begin{proof} By using the triangle inequality, for example
  $$\begin{gathered}  \lvert \E( X_{t, x}(t+h)-\hat{X}_{t, x}(t+h) ) \rvert \leq
    \lvert \E(X_{t, x}(t+h)- \bar{X}_{t, x}(t+h)  ) \rvert \\
    + \lvert \E( \bar{X}_{t, x}(t+h)-\hat{X}_{t, x}(t+h)) \rvert
  \end{gathered}$$
then apply the estimates.  
\end{proof}
The argument that an ANP-retraction preserves the order of accuracy in the ODE case needs a modification in the SDE case, since the Gaussian distribution is unbounded, so even when $h$ is small, there is a small chance the stochastic move lies far from the tubular neighborhood where the ANP-retraction is defined. Following \cite{MilsteinTreSymp,ZhouZhangHongSong}, we truncate the stochastic moves. For a scalar Wiener process $w_t$, represent $\Delta w_t(h) =w_{t+h}-w_t= \xi h^{\frac{1}{2}}$ where $\xi\sim N(0, 1)$ is standard normal. Fix $r \geq 1$, let $A_h = (2r\lvert \ln(h)\rvert)^{\frac{1}{2}}$ and
\begin{equation}\zeta_h := \begin{cases}\xi, \lvert \xi \rvert \leq A_h,\\
    A_h, \xi > A_h,\\
    -A_h,\xi < - A_h.
  \end{cases}\label{eq:STcutoff}
\end{equation}
Then $\zeta_h$ is {\it truncated} from $\xi$. For a multi-dimensional Wiener process $W_t = (W_{1,t},\cdots, W_{m, t})$, represent $\Delta W_{i, t}(h) = \sqrt{h} \xi_i$, where $\xi_i\sim N(0, 1)$ and $\xi_i$'s are independent, then we can truncate $\xi_i$ to $\zeta_{i, h}$ in the method $\bar{X}$. %As seen in \cite{MilsteinTreSymp}, an estimation for the truncated method could be complicated, thus, we will assume its order of accuracy is given.
\begin{theorem}\label{theo:proj}Assume the global Lipschitz condition in \cref{eq:lips}, let $\bar{X}$ be an approximated method on $\R^n$ for a SDE where $\cM$ is an invariant manifold, and $\tilde{X}$ be the method obtained by replacing the stochastic step $\Delta w(h)=\xi\sqrt{h}$ with $\zeta_h\sqrt{h}$ in \cref{eq:STcutoff}. Assume the truncated method $\tilde{X}$ is with orders of accuracy $p_1, p_2 \geq p_1+\frac{1}{2}$ for expectation and mean-square. Let $\pi$ be an ANP-retraction defined in a tubular neighborhood $\cU$ of $\cM\subset\cE$. Assume there is $h_0>0$ such that for $h\leq h_0$, both $\hat{X}=\piperp(\tilde{X})$ (realizing the smallest distance between $\tilde{X}$ and $\cM$) and $\pi(\tilde{X})$ are defined. Then the method $\check{X}_{t, h} = \pi(\tilde{X}_{t, h})$ also has orders of accuracy $p_1, p_2 \geq p_1+\frac{1}{2}$ for expectation and mean-square.
\end{theorem}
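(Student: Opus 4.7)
The plan is to invoke \cref{prop:triangl} with the one-step approximations $\bar{X}_{t,x}$ and $\hat{X}_{t,x}$ of that proposition played respectively by $\tilde{X}_{t,x}$ and $\check{X}_{t,x}=\pi(\tilde{X}_{t,x})$. Since the hypothesis already supplies the $(p_1,p_2)$ one-step bounds of $\tilde{X}$ against the exact solution $X_{t,x}$, the task reduces to controlling the expected and mean-square deviations between $\check{X}_{t,x}(t+h)$ and $\tilde{X}_{t,x}(t+h)$ at the matching orders.

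The key pathwise estimate combines the ANP-retraction inequality, the minimizing property of $\piperp$, and the invariance of $\cM$. For $h\leq h_0$ the hypothesis places $\tilde{X}_{t,x}(t+h)$ in the tubular neighborhood $\cU$, so
\[
\lvert \tilde{X}_{t,x}(t+h) - \pi(\tilde{X}_{t,x}(t+h))\rvert \leq C\,\lvert \tilde{X}_{t,x}(t+h) - \piperp(\tilde{X}_{t,x}(t+h))\rvert \leq C\,\lvert \tilde{X}_{t,x}(t+h) - X_{t,x}(t+h)\rvert,
\]
the last step because $X_{t,x}(t+h)\in\cM$ by \cref{prop:conservedIto}. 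Squaring, taking expectation, and applying the $p_2$-order mean-square hypothesis on $\tilde{X}$ immediately gives
\[
[\E\lvert \tilde{X}_{t,x}(t+h)-\check{X}_{t,x}(t+h)\rvert^2]^{1/2}\leq CK_2(1+\lvert x\rvert^2)^{1/2}h^{p_2},
\]
which is exactly what \cref{prop:triangl} requires for the mean-square side.

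For the expectation bound, Jensen's inequality on the same pathwise estimate only yields order $h^{p_2}$, which is weaker than the needed $h^{p_1}$; I would sharpen it by the Taylor expansion $\pi(\tilde{X}) - \tilde{X} = (\pi'(x)-I)(\tilde{X} - x) + O(\lvert \tilde{X} - x\rvert^2)$, observing that $I-\pi'(x)$ annihilates $T_x\cM$. Since $\sigmam$ is tangent to $\cM$ on $\cM$ (a consequence of the $\rC'\sigmam\sigmam^{\sfT}=0$ half of \cref{eq:secondtangent} together with invertibility of $\sigmam^{\sfT}\sigmam$), the stochastic contribution of $\tilde{X}-x$ vanishes under $I-\pi'(x)$ at leading order; the other half of \cref{eq:secondtangent} relating $2\mu$ to the Hessian trace of $\sigmam\sigmam^{\sfT}$ then supplies the cancellation between the $O(h)$ drift term and the quadratic expectation of the stochastic increment needed to deliver the $h^{p_1}$ bound. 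This cancellation is the main obstacle: without it, only the mean-square order would propagate to the expectation side. The truncation \cref{eq:STcutoff} is what makes the sharpened argument rigorous: together with \cref{eq:lips} it forces $\lvert \tilde{X}_{t,x}(t+h)-x\rvert = O(\sqrt{h\lvert \ln h\rvert})$ uniformly, which both keeps $\tilde{X}_{t,x}(t+h)$ inside the region where the Taylor expansion and the ANP constant $C$ are uniformly valid, and ensures that the moments of $\zeta_h$ differ from those of the standard Gaussian by terms negligible at orders $p_1,p_2$. Once both one-step bounds hold, \cref{prop:triangl} promotes them to the claimed global orders for $\check{X}$.
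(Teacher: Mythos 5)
Your overall strategy coincides with the paper's: both arguments rest on the pathwise minimization bound $\lvert \tilde X_{t,x}(t+h)-\piperp(\tilde X_{t,x}(t+h))\rvert \leq \lvert \tilde X_{t,x}(t+h)-X_{t,x}(t+h)\rvert$ (valid because the exact solution stays on the invariant manifold $\cM$), the ANP inequality to pass from $\piperp$ to $\pi$, and an appeal to \cref{prop:triangl} to convert one-step deviation estimates into the claimed orders. For the mean-square deviation the two proofs are identical. Where you genuinely diverge is on the expectation deviation: the paper simply asserts that ``taking expectation'' of the pathwise bound transfers the estimate of \cref{eq:esp1} to the left-hand side, whereas you correctly observe that this route only yields $\lvert \E(\check X-\tilde X)\rvert \leq \E\lvert \check X-\tilde X\rvert = O(h^{p_2})$, which is weaker than the required $O(h^{p_1})$ when $p_1\geq p_2+\tfrac12$ (the intended relation; the inequality ``$p_2\geq p_1+\tfrac12$'' in the statement appears to be a typo given the convention stated before \cref{prop:triangl}). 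Your repair --- Taylor-expanding $\pi$ about $x\in\cM$, using that $\pi'(x)-\dI_{\cE}$ annihilates $T_x\cM$ so the tangent stochastic increment drops out at first order, and cancelling the residual $O(h)$ contribution of the normal part of the drift against the expectation of the quadratic term in the noise --- is sound: the cancellation follows by combining the SOO identity \cref{eq:secondtangent} with the relation $\pi'(x)\pi^{(2)}(x;v,v)=0$ for tangent $v$ established in the proof of \cref{theo:geodesic}, and the truncation \cref{eq:STcutoff} together with \cref{eq:lips} keeps the expansion and the ANP constant uniformly valid. In short, your proposal is the paper's proof plus the missing justification of the expectation-order step; read literally, the paper's own version leaves that step as a gap, and your sharpened argument is what is actually needed to close it.
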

\begin{proof} Replacing $\Delta W(h) = W(\theta)-W(t)$ by $\hat{\Delta} W(h) = \zeta_h h^{\frac{1}{2}}$ then $\hat{\Delta} W(h)$ is bounded and goes to zero when $h$ is sufficiently small, hence $A(t, x, \hat{\Delta} W(h))$ is close to zero, thus, for small enough $h$, $\hat{X} = \piperp(\tilde{X})$ and $\check{X} = \pi(\tilde{X})$ exist. Since $\hat{X}_{t, x}(t+h)$ is nearest to $\tilde{X}_{t, x}(t+h)$ on $\cM$
$$\lvert \hat{X}_{t, x}(t+h)- \tilde{X}_{t, h}(t+h)\rvert \leq
\lvert X_{t, x}(t+h)- \tilde{X}_{t, h}(t+h)\rvert.
$$
From here, we can take expectation. The estimates for the right-hand side using \cref{eq:esp1,eq:esp2} imply the same estimates on the left, then we can use \cref{prop:triangl} to proves the statement for $\hat{X}$. The estimate $|\tilde{X}-\pi(\tilde{X}) | \leq C|\tilde{X}-\piperp{\tilde{X}}|$ implies the condition in \cref{prop:triangl} is satisfied for $\check{X}$.
\end{proof}
Thus, if we use the Euler-Maruyama's method \cite{KUHNEL2019411} for an It{\=o} equation, in \cref{alg:Simulation}, $X_{i,j+1}$ is computed from $X_{ij}$ by
\begin{equation} X_{i,j+1}\gets \pi(X_{ij} + h^{\frac{1}{2}}\sigmam(X_{ij}, jh)\Delta + h\mu(X_{ij}, jh)).\end{equation}
If we use the Euler-Heun's method for a Stratonovich equation \cite{KUHNEL2019411} then
  \begin{equation} X_{i,j+1}\gets \pi(X_{ij}  + h\mu_S (X_{ij}, jh)  + \frac{h^{\frac{1}{2}}}{2}(\sigmam(X_{ij}, jh) + \sigmam(X_{ij}+h^{\frac{1}{2}}\sigmam(X_{ij}, jh)\Delta, jh))\Delta).
  \end{equation}

\subsection{Retractive Euler-Maruyama methods}\label{sec:retractivesim}We now show that given a tangent retraction $\fR$ on $\cM\subset\cE$, we can construct an integrator for the SDE in \cref{prop:conservedIto}. We do not assume a relationship between the retraction and the SDE. Consider $\fR(x, .):\xi\mapsto \fR(x, \xi)$ as a function of $\xi\in T_x\cM\subset\cE$, mapping to $\cM\subset\cE$. Let $\fR^{(2)}(x, \xi;., .)$ be the Hessian (in $\xi$) of $\fR$, which evaluates on vectors $v, w\in T_x\cM$ to a vector $\fR^{(2)}(x, \xi; v, w)\in\cE$. If $\fR$ is of class $C^3$, assume for small $h\in\R$
\begin{equation}\fR(x, hv) = x + hv + \frac{h^2}{2}\fR^{(2)}(x,0; v, v) + O(h^3(1+|x|^2)^{\frac{1}{2}}).
\end{equation}
Note that $\fR^{(2)}$ is bilinear in $(T_x\cM)^2$, we will extend it bilinearly to $\cE^2$. For example, consider the unit sphere $x^{\sfT}x = 1$ with the rescaling retraction $\fR(x, v) = \frac{1}{|x+v|}(x+v)$. Then $\fR^{(2)}(x, 0; v, v)= -xv^{\sfT}v$ and
\begin{equation} \fR(x, hv) = x + hv - \frac{h^2}{2}xv^{\sfT} v + O(h^3(1+|x|^2)^{\frac{1}{2}}).\label{eq:dominate}
\end{equation}
by Taylor expansion, since $x$ is bounded on the sphere.
\begin{theorem}\label{theo:REM}Let $W_t$ be a Wiener process on an inner product space $\cE_W=\R^k$, with an orthonormal basis $\{w_j| j=1,\cdots, k\}$, $\sigmam(X_t, t)$ is a map from $\cE_W$ to $\cE$, such that $\sigmam(X_t, t)\cE_W\subset T_{X_t}\cM$, and $(\ttA, \ttM) = (2\mu, \sigmam\sigmam^{\sfT})$ is a SOO on $\cM$. Consider the SDE on $\cM\in\cE$, represented by the It{\=o} equation on $\cE$
\begin{equation} dX_t =\mu(X_t, t)dt + \sigmam(X_t, t)dW_t.\label{eq:ItoSDE}\end{equation}
In that case, $\mu_{\fR}$ defined below is in $T_{X_t}\cM$
\begin{equation}\mu_{\fR}(X_t, t) := \mu(X_t, t) -\frac{1}{2}\sum_{j=1}^k\fR^{(2)}(X_t, 0;\sigmam(X_t, t)w_j, \sigmam(X_t, t)w_j).
\label{eq:muradj}
\end{equation}
Given $X_0\in \cM$, set $\Delta_{W_{t}}(h):= W_{t+h}-W_t$. Assume $\mu(x, t)$ and $\sigmam$ has partial derivatives with respect to $t$ that grow at most linearly in $x$ as $\lvert x \rvert \to \infty$ and the global Lipschitz condition as in \cite[theorem 1.1.1]{MilsteinTre} 
\begin{equation}\lvert \mu(x, t) -\mu(y, t) \rvert + \sum_{j=1}^k\lvert \sigmam(x, t)w_j-\sigmam(y, t)w_j\rvert \leq K|x-y|.
\end{equation}
Assume that entries of $\fR^{(2)}$ is dominated by a constant times $(1+|x|^2)^{\frac{1}{2}}$, and entries of the representation of the linear map $\Delta\mapsto \fR^2(x, 0; \sigmam \Delta , \mu_{\fR})$ as a matrix is dominated similarly, and \cref{eq:dominate} is satisfied. then in the notation of that theorem, the \emph{retractive Euler-Maruyama method}
\begin{equation}\bar{X}(t+h) = \fR(X_t, \sigmam(X_t, t)\Delta_{W_{t}(h)} + h\mu_{\fR}(X_t, t))
\label{eq:retracEM}
\end{equation}
converges to a solution of \cref{eq:ItoSDE}, is of order of accuracy $p_1=\frac{3}{2}$ for expectation, $p_2 = 1$ for mean square deviation, and order of accuracy $p=\frac{1}{2}=p_2-\frac{1}{2}$.
\end{theorem}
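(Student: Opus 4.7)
The plan is to establish the two one-step consistency estimates \cref{eq:esp1,eq:esp2} with $p_1=3/2$ and $p_2=1$, then invoke the Milstein--Tretyakov fundamental theorem \cite[theorem 1.1.1]{MilsteinTre} to obtain the global mean-square order $p=p_2-\tfrac{1}{2}=\tfrac{1}{2}$. Before doing so I will check that $\mu_{\fR}(X_t,t)\in T_{X_t}\cM$, so that $\sigmam\Delta W + h\mu_{\fR}$ is an honest tangent vector and the retraction in \cref{eq:retracEM} is legitimate. Differentiate $\rC\circ\fR(x,v)\equiv 0$ (valid for $v\in T_x\cM$ near $0$) twice in $v$, evaluate at $v=\sigmam(x,t)w_j$, sum over $j$, and use the SOO relation \cref{eq:secondtangentdetail} for the invariant pair $(2\mu,\sigmam\sigmam^{\sfT})$: the two traces of $(\rC^j)^{(2)}\sigmam\sigmam^{\sfT}$ cancel, leaving $(\rC^j)'(x)\mu_{\fR}=0$.

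For the expectation bound, set $\eta:=\sigmam(X_t,t)\Delta_{W_t}(h)+h\mu_{\fR}(X_t,t)$. The third-order Taylor expansion of $\fR$ stated in the theorem yields
\begin{equation*}
  \bar{X}(t+h)-X_t = \eta + \tfrac{1}{2}\fR^{(2)}(X_t,0;\eta,\eta) + r,\qquad |r|\leq C(1+|X_t|^2)^{1/2}|\eta|^3.
\end{equation*}
Since $\E\Delta_{W_t}(h)=0$ and $\E[\fR^{(2)}(X_t,0;\sigmam\Delta W,\sigmam\Delta W)]=h\sum_j\fR^{(2)}(X_t,0;\sigmam w_j,\sigmam w_j)$ by \cref{lem:expect1}, taking expectations gives
\begin{equation*}
  \E(\bar{X}(t+h)-X_t)=h\mu_{\fR}+\tfrac{h}{2}\sum_j\fR^{(2)}(X_t,0;\sigmam w_j,\sigmam w_j)+O(h^{3/2}) = h\mu+O(h^{3/2}),
\end{equation*}
where the definition \cref{eq:muradj} of $\mu_{\fR}$ forces the last equality. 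Comparison with the standard It{\=o}--Taylor identity $\E(X_{t,x}(t+h)-x)=h\mu+O(h^2)$ and the triangle inequality yield $|\E(X_{t,x}(t+h)-\bar{X}(t+h))|=O(h^{3/2})$, verifying $p_1=3/2$.

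For the mean-square bound, the leading stochastic term $\sigmam\Delta W$ appears identically on both sides and coincides with the leading It{\=o}--Taylor term of the exact solution. The remaining pieces on the approximation side---$h\mu_{\fR}$, the quadratic $\tfrac{1}{2}\fR^{(2)}(X_t,0;\eta,\eta)$ (dominated in $L^2$ by $\fR^{(2)}(X_t,0;\sigmam\Delta W,\sigmam\Delta W)$, whose $L^2$ norm is $O(h)$ via $\E|\Delta W|^4=O(h^2)$), and the cubic remainder $r$ (of $L^2$ size $O(h^{3/2})$ by Gaussian moment bounds)---are each $O_{L^2}(h)$ or smaller; on the exact side, the drift integral and the It{\=o} correction of the stochastic integral are $O_{L^2}(h)$ by the It{\=o} isometry and the assumed global Lipschitz condition. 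Combining gives $[\E|X_{t,x}(t+h)-\bar{X}(t+h)|^2]^{1/2}=O(h)$, i.e.\ $p_2=1$. Since $p_1=3/2=p_2+\tfrac{1}{2}$, the fundamental theorem then delivers the stated global order $\tfrac{1}{2}$.

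The hardest step will be the mean-square bookkeeping: the cubic retraction remainder $r$ must be controlled uniformly in $X_t$, and the $(1+|X_t|^2)^{1/2}$ prefactor must be carried through every estimate, using precisely the growth hypotheses on $\fR^{(2)}$ and on the bilinear map $\Delta\mapsto\fR^{(2)}(X_t,0;\sigmam\Delta,\mu_{\fR})$ that are stated in the theorem. A secondary technicality is that a Gaussian $\Delta W$ can drive $\eta$ outside the tubular neighborhood on which $\fR$ is defined; this is handled by the truncation device \cref{eq:STcutoff} used in \cref{theo:proj}, which perturbs both the expectation and the mean-square errors only at super-polynomial order in $h$ and therefore leaves $p_1$ and $p_2$ unchanged.
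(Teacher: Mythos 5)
Your proposal is correct and follows essentially the same route as the paper: tangency of $\mu_{\fR}$ by differentiating $\rC^i(\fR(x,h\xi))=0$ twice and invoking the SOO identity, a second-order Taylor expansion of $\fR$ whose extra quadratic term is cancelled in expectation by the $-\tfrac{1}{2}\sum_j\fR^{(2)}(X_t,0;\sigmam w_j,\sigmam w_j)$ correction (via \cref{lem:expect1}), Gaussian moment bounds (\cref{lem:expect2,lem:expect3}) for the one-step orders $p_1=\tfrac32$, $p_2=1$, and the Milstein--Tretyakov fundamental theorem for the global order $\tfrac12$. The only cosmetic difference is that you compare the one-step map directly with the exact solution's It{\=o}--Taylor expansion, whereas the paper compares it with the ambient Euler--Maruyama scheme and transfers its known orders via \cref{prop:triangl}; your closing remark about truncating the Gaussian increment to stay in the tubular neighborhood is a point the paper's proof glosses over.
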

Thus, using \cref{eq:retracEM}, in \cref{alg:Simulation}, $X_{i,j+1}$ is computed from $X_{ij}$ as
\begin{equation} X_{i,j+1}\gets \fR(X_{ij}, h^{\frac{1}{2}}\sigmam(X_{ij}, jh)\Delta + h\mu_{\fR}(X_{ij}, jh)).\end{equation}
In particular, if $X_t$ is a Riemannian Brownian motion and $\fR(X, v)$ is a second-order tangent retraction of the Levi-Civita connection with Christoffel function $\Gamma$, then $\fR^{(2)}(X_t,0; v, v)=-\Gamma(X_t, v, v)$, thus $\mu_{\fR} = 0$.
\begin{proof}Assume near $x\in\cM$, $\cM$ is defined by equations $\rC^i(x)=0, i=1\cdots n-\dim\cM$, then by the SOO assumption $(\rC^i)'\mu + \frac{1}{2}\Tr\{(\rC^i)^{(2)}\sigmam\sigmam^{\sfT}\}=0$, or
\begin{equation}(\rC^i)'\mu + \frac{1}{2} \sum_jw_j^{\sfT}\sigmam^{\sfT} (\rC^i)^{(2)}\sigmam w_j
=(\rC^i)'\mu + \frac{1}{2} \sum_j(\rC^i)^{(2)}(x, \sigmam w_j, \sigmam w_j)
=0\label{eq:proofEM1}
\end{equation}
where everything is evaluated at $(x, t)$. For $\xi\in T_x\cM$, differentiate the equations $\rC^i(\fR(x, h\xi)=0$ in $h$ twice then set $h=0$. Here, $\fR^{(1)}(x, .)(h\xi)$ denote the derivative in the $T_x\cM$ variable, evaluated at $(h\xi)$
$$\begin{gathered}(\rC^i)'(\fR(x, h\xi)) \fR^{(1)}(x, .)(h\xi)\xi =0,\\
(\rC^i)^{(2)}(x; \xi, \xi) + (\rC^i)'(x) \fR^{(2)}(x, 0; \xi, \xi) =0.
\end{gathered}
$$
Now, set $\xi = \sigmam w_j$ in the second equation, and subtract half the sum in $w_j$ with \cref{eq:proofEM1}, we get $(\rC^i)'\mu_{\fR}=0$, hence, $\mu_{\fR}$ is tangent to $\cM$.

Since $\mu_r\in T_{X_t\cM}$, hence $\sigmam \Delta_{W_t}(h) + h\mu_r\in T_{X_t}\cM$, the retraction in \cref{eq:retracEM} is well-defined for small $h$. We compare its Taylor expansion with the usual Euler-Maruyama (EM)'s method $\Bar{\Bar{X}}(t+h) = X_t + \sigmam(X_t, t)\Delta_{W_t(h)} + h\mu(X_t, t)$. Dropping some variables for brevity, the difference is
$$\begin{gathered}\bar{X}(t+h) -
\Bar{\Bar{X}}(t+h)  = 
X_t + \sigmam\Delta_{W_{t}(h)} + h\mu_{\fR}
+ \frac{1}{2}\fR^{(2)}(X_t, 0; \sigmam \Delta_{W_t}(h), \sigmam \Delta_{W_t}(h)) \\
+h\fR^{(2)}(X_t, 0 ; \sigmam \Delta_{W_t}(h), \mu_{\fR})
+ \frac{h^2}{2}\fR^{(2)}(X_t, 0; \mu_{\fR}, \mu_{\fR})\\
- (X_t + \sigmam\Delta_{W_t(h)} + h\mu)
+ O(h^3(1+|X_t|^2)^{\frac{1}{2}})\\
= \frac{h}{2}(-\sum_j \fR^{(2)}(X_t,0; \sigmam w_j, \sigmam w_j)
+ \frac{1}{2}\fR^{(2)}(X_t, 0; h^{-\frac{1}{2}}\sigmam \Delta_{W_t}(h), h^{-\frac{1}{2}}\sigmam \Delta_{W_t}(h))) \\
+h^{\frac{3}{2}}\fR^{(2)}(X_t, 0; h^{-\frac{1}{2}}\sigmam \Delta_{W_t}(h), \mu_{\fR})
+ \frac{h^2}{2}\fR^{(2)}(X_t, 0; \mu_{\fR}, \mu_{\fR})
+ O(h^3(1+|X_t|^2)^{\frac{1}{2}}).
\end{gathered}
$$
Applying \cref{lem:expect1} to the quadratic form $\Delta \mapsto \frac{1}{2}\fR^{(2)}(X_t, 0; \sigmam \Delta, \sigmam \Delta) $ to cancel the first two terms (in expectation) and \cref{lem:expect2} for each tensor contraction of (entries of) $\fR^{(2)}(X,0)$ with $\sigmam^{\sfT}\mu_{\fR}$, we have $|\E(\bar{X}(t+h) - \Bar{\Bar{X}}(t+h) )|=O(h^{\frac{3}{2}}(1+|x|^2)^{\frac{1}{2}})$, together with $p_1=\frac{3}{2}$ of the original Euler's method \cite[section 1.1.5]{MilsteinTre}, giving us the combined $p_1=\frac{3}{2}$ for the retractive EM method. Similarly, by \cref{lem:expect3}, with $\Delta = h^{-\frac{1}{2}}\Delta_{W_t}(h)$
$$\E\lvert \bar{X}(t+h) - \Bar{\Bar{X}}(t+h) |^2=\frac{h^2}{4}\E
(\fR^{(2)}(X_t, 0; \sigmam \Delta, \sigmam \Delta)
-\fR^{(2)}(X_t, 0; \sigmam w_j, \sigmam w_j))^2
+ O(K_{h,x})
$$
where $K_{h, x}=h^{\frac{5}{2}}(1+|x|^2)^{\frac{1}{2}}$, then apply \cref{lem:expect3}, we have $p_2=1$.
\end{proof}
For an autonomous system ($\mu$ and $\sigmam$ are not time-dependent), in the setting of \cite[section 2(d)]{AmstrongBrigoIntrinsic}, the map $\gamma_{X}:\cE_W\to \R^n$ below 
\begin{equation}\gamma_{_X}\Delta_W = \fR(X, \sigmam(X)\Delta_W + \mu_{\fR}(X)\delta t)
\end{equation}
where $\Delta_W \sim N(0, (\delta t)^{\frac{1}{2}}\dI_{\cE_W})$ corresponds to a 2-jet, which approximates the It{\=o} equation $dX_t = \mu(X_t) dt + \sigmam(X_t)dW_t$. Thus, through \cref{theo:REM}, we link tangent retractions with jets, and construct a jet to solve this SDE.

We briefly discuss here the random-walk-type integrator in \cite{SHSW}. Since for $X_t\in\cM$, $\sigmam(X_t)$ is a linear map and $\Delta_W(h)$ is a multivariate normal random variable, $\sigmam(X_t) \Delta_W(h)$ is \emph{multivariate normal} in $T_{X_t}\cM$, with covariant matrix $\sigmam\sigmam^{\sfT}$ (we dropped $X_t$ for brevity), which we assume to be invertible when restricted to $T_{X_t}\cM$. This defines an inner product $\sfg_{\sigmam}$ defined by $((\sigmam\sigmam^{\sfT})_{T_X\cM})^{-1}$ on $T_X\cM$. It is known \cite{chikuse_book,SHSW}, the rescaling (excluding the zero point) of a \emph{multivariate normal} variable on an inner product space produces a \emph{uniform} distribution on the unit sphere, hence, $\frac{1}{ | \sigmam \Delta_W(h) |^{\frac{1}{2}}_{\sfg_{\sigmam}}}\sigmam\Delta_W(h)$ 
($|.|_{\sfg_{\sigmam}}$ is a the norm under $\sfg_{\sigmam}$) is a \emph{uniform distribution} on the unit sphere of $T_X\cM$ under $\sfg_{\sigmam}$. Thus, this provides a way to sample that unit sphere. We can use this to approximate the Brownian motion as in \cite{SHSW}, if we have a second-order tangent retraction corresponding to the Levi-Civita connection for $\sfg_{\sigmam}$. If we do not use a second-order retraction, or if in the general equation (\ref{eq:ItoSDE}) the term $\mu_{\fR}$ is not zero then we will need to make an appropriate modification, the retraction $\fR(X_t, \frac{\sqrt{h\dim\cM}}{\lvert \sigmam \Delta_W(1)\rvert_{\sfg_{\sigmam}}}\sigmam\Delta_W(1) + h\mu_{\fR})$ is a candidate when comparing with \cref{eq:retracEM}. This will require further justification.

As an example, we consider a compact hypersurface defined by $\rC(x) = \sum_{i=1}^n d_ix_i^p=1$ (this class of hypersurfaces includes the ellipsoids) although the result here extends straight-forwardly to general homogeneous constraints as in \cref{sec:projgeo}.  Instead of using the second-order tangent retraction described there for the rescaling $\cE$-tubular retraction $\pi(q) = \frac{1}{\rC(q)^{\frac{1}{p}}}q$, we use the simple tangent retraction $\fR_{_+}(x, v) = \pi(x+v)$. In this case, we expand
\begin{equation}
\fR_{_+}(x, hv) = x+hv + \frac{h^2(1-p)\sum d_i x_i^{p-2} v_i^2}{2}x +O(h^3)
\end{equation}
using compactness to bound $(1+|x|^2)^{\frac{1}{2}}$. Using the second order term as an adjustment in \cref{eq:muradj}, in a \href{https://github.com/dnguyend/jax-rb/blob/main/tests/notebooks/TestRetractiveIntegrator.ipynb}{notebook} in \cite{LaplaceNumerical}, we verify that this method produces similar results to other simulation methods considered in this work.
\section{Experiments}
\subsection{Software Library}We develop a software library, JAX-RB \cite{LaplaceNumerical} implementing the basic geometry and simulation framework developed here. The package is organized into two main modules, \emph{manifold}, containing the geometric operations of the manifolds in \cref{sec:examples}, and \emph{simulation}, containing the different numerical simulation methods in \cref{sec:numeric}. The package contains workbooks and scripts illustrating the basic function calls and performing numerical tests. The derivatives are computed using the Python package JAX \cite{jax2018github}. We also take advantage of JAX's \emph{vmap} mechanism and just-in-time compilation to efficiently vectorize the code.

All results in this section are obtained using the package. For the sphere and the manifolds in \cref{sec:examples}, we perform several sanity tests, including confirming the metric compatibility of the Levi-Civita connection, and comparing the Laplacian computed using orthonormal vector fields near a point $x\in\cM$, and computed using \cref{theo:laplace,theo:lift}. We also compare simulation results from different methods, and compare long term simulations with uniform (with respect to the Riemannian metric) sampling on the manifold for the sphere, SO(N), Stiefel and Grassmann manifolds.

\subsection{Simulation versus heat kernel integration on the sphere}
We first compare simulation results with integration using the heat kernel for the two and three-dimensional spheres. For the $2$-sphere, where the heat kernel is given by a theta function
\cite{HansES,mpmath}, the heat kernel method gives the expectation at $T=2$ for cost $\phi^{\frac{5}{2}}$, $\phi =\cos^{-1}(\frac{x_1}{r})$ for a point $(x_1,\cdots, x_n)\in \Sph^2$, diffusion coefficient $.4$, radius $r=3$ of $.299$, while simulations at $1000$ paths and $1000$ subdivision yield $.282$ to $.288$ (geodesic, It{\=o} and Stratonovich). For $d=3$ with cost $\phi^{\frac{3}{2}}+\phi^{\frac{5}{2}}$, the heat kernel result is $1.02$, while the simulations results are between $1.05$ to $1.066$, as can be seen in the notebook \href{https://github.com/dnguyend/jax-rb/blob/main/tests/notebooks/test_heat_kernel.ipynb}{test\_heat\_kernel.ipynb} of \cite{LaplaceNumerical}.

\subsection{Comparing Brownian simulations using different numerical methods}\label{sec:compare}
We consider the three main simulation methods, with some variances. %As mention, the retractive Euler's method in \cref{sec:retractivesim} is the geodesic\slash retractive simulation below for Brownian manifolds with a second-order tangent retraction, and we have tested another example in that section.
\begin{itemize}\item It{\=o} equation with projection. Using \cref{eq:BrownianIto} together with \cref{theo:proj}. The main integration method is Euler-Maruyama with step length $h$.
\item Stratonovich equation with projection. Using \cref{eq:BrownianStrat} or \cref{eq:BrownianStratAlt} together with \cref{theo:proj}. The main integration method is Euler-Heun.
\item Geodesic\slash retractive simulation. This method is based on \cref{sec:retractivesim}, and comes in two flavors, the retraction in \cref{eq:retracEM}, and the normalized retraction discussed at the end of that section. We use a second-order tangent retraction of the Levi-Civita connection as in \cref{theo:geodesic}, thus, the adjusted drift $\mu_{\fR}$ is zero, and the normalized method reduces to the framework in \cite{Gangolli,Jorgensen,SHSW}. We test both flavors. If we have an orthonormal basis of the tangent space at $x$, a sample vector from the uniform distribution on the unit sphere in $\R^d$, $d=\dim\cM$ could be used to generate a tangent vector of length $d^{\frac{1}{2}}h$. We apply this observation for left-invariant groups, where we use the left translation of an orthonormal basis of the Lie algebra.
\end{itemize}
Methods involving normalizing the moves to a fixed length as in the geodesic random walk could help avoid the cutoff step. Higher-order methods could also be used, we will leave that to future research.

We use the rescaling retraction for the sphere and for $\SL(N)$, the addition retraction for $\GL^+(N)$, $\Sd{N}$ and $\Aff(N)$, the polar decomposition for $\SO(N)$, Stiefel, Grassmann and $\SE(N)$ (for the rotation component), with the second-order adjustment by the Christoffel function, see \cite{LaplaceNumerical}.
%We consider Milstein's method in some cases (\emph{this is on an ad-hoc basis, we can add to the supplemental material as the article will look long after we add background review}), which involves evaluating the derivative of $\Pi\sfg^{-1}$. Needless to say, other methods could be used depending on the problem.

%We consider a few path functions $f$ in simulations, which could be path-dependent as well as final-values dependent, and compute the expected value $\E(f, \mu_W)$ where $\mu_W$ is the Riemannian-Wiener measure. 
As explained in \cref{alg:Simulation}, we compute the expectation $\E(\int_0^Tg(X_s, s)ds + f(X_{T}, T), m_{RB})$ where $m_{RB}$ is the Riemannian-Brownian measure on paths on $\cM$.
The expectation is evaluated on $n_{path}=1000$ paths with a ladder of $n_{div}=200,500,700$ subdivisions. For the noncompact manifold $\Sd{n}$, we use a bounded function when the path goes to infinity. For the left-invariant metrics, we bound the eigenvalue of $\cI$ so that the Brownian motion does not grow too fast (This is important for noncompact groups such as $\GL(N), \SL(N), \Aff(N)$, and $\SE(N)$.)

We include the Grassmann manifold in our study. While some analysis may be required to justify the projection method and the tangent retraction on quotient manifolds, the simulation results look reasonable, converge to the uniform distribution at large $T$ when applied to lifted functions from the submersion.

For the tests we conducted, all three methods and their variances produce results matching within a reasonable tolerance, see \cref{tbl:tblBySubdir,tbl:tblBySimType}. In particular, for \cref{tbl:tblBySimType}, we show the ranges of expected values produced by different runs and methods, which confirms good agreements. %For the variances, we drive the manifold Brownian motion by a Euclidean Brownian motion when possible, for example for the groups or $\Sd{N}$ case (while this is mathematically equivalent, it provides another sanity test).
% We also run three simulations with three separate keys (seeds) for each test case, to confirm they agree

In the integral $\E(\int_0^Tg(X_s, s)ds + f(X_{T}, T), m_{RB})$, we consider two cases, $g=0$ and $g\neq 0$. For example, for the manifold of positive definite matrices, the first group (final value only) corresponds to $g=0$ and  $f(X_T) = \lvert (X_{T})_{11}\rvert$ ($T$ is the final time). The second group is with the same $f$ with $g = \max((X_t)_{11}), 0)$. The code for all experiments is available in \cite{LaplaceNumerical}. The large discrepancies for $\Sd{3}$ at $T=10$ shows long time simulation for a non compact manifold requires further refinements.\begin{table}
  \resizebox{\textwidth}{!}{
\begin{tabular}{llrrrrrrrr}
\toprule
        & Cost type & \multicolumn{4}{l}{At final time only} & \multicolumn{4}{l}{With intermediate cost} \\
        & Final time &               0.5  &   2.0  &   3.0  &   10.0 &                   0.5  &    2.0  &    3.0  &       10.0 \\
\midrule
$\Aff(3)$ & Geodesic &              0.718 &  0.738 &  0.737 &  0.632 &                  0.777 &   1.948 &   3.812 &    116.227 \\
        & It{\=o} &              0.718 &  0.738 &  0.737 &  0.634 &                  0.777 &   1.948 &   3.805 &    114.436 \\
        & Stratonovich &              0.718 &  0.738 &  0.737 &  0.636 &                  0.777 &   1.945 &   3.796 &    113.401 \\
$\GL^+(2)$ & Geodesic &              0.705 &  0.702 &  0.699 &  0.694 &                  0.769 &   1.784 &   3.275 &     39.407 \\
        & It{\=o} &              0.705 &  0.702 &  0.699 &  0.693 &                  0.769 &   1.786 &   3.280 &     39.498 \\
        & Stratonovich &              0.705 &  0.702 &  0.699 &  0.693 &                  0.769 &   1.785 &   3.280 &     39.462 \\
$\SE(3)$ & Geodesic &              0.947 &  0.801 &  0.734 &  0.454 &                  1.005 &   1.670 &   2.494 &     12.839 \\
        & It{\=o} &              0.947 &  0.802 &  0.735 &  0.454 &                  1.005 &   1.668 &   2.495 &     12.853 \\
        & Stratonovich &              0.947 &  0.802 &  0.735 &  0.454 &                  1.005 &   1.668 &   2.494 &     12.835 \\
$\SL(3)$ & Geodesic &              0.237 &  0.245 &  0.227 &  0.079 &                  0.303 &   1.445 &   3.242 &     96.066 \\
        & It{\=o} &              0.238 &  0.243 &  0.226 &  0.081 &                  0.303 &   1.444 &   3.239 &     82.257 \\
        & Stratonovich &              0.238 &  0.243 &  0.226 &  0.079 &                  0.303 &   1.444 &   3.236 &     95.928 \\
$\SO(3)$ & Geodesic &              0.931 &  0.769 &  0.681 &  0.408 &                  0.990 &   1.581 &   2.350 &     11.410 \\
        & It{\=o} &              0.932 &  0.769 &  0.681 &  0.410 &                  0.990 &   1.581 &   2.348 &     11.407 \\
        & Stratonovich &              0.932 &  0.769 &  0.681 &  0.409 &                  0.990 &   1.581 &   2.348 &     11.387 \\
$\Gr{5}{3}$ & Geodesic &              0.631 &  0.601 &  0.599 &  0.597 &                  0.723 &   1.820 &   3.320 &     30.599 \\
        & It{\=o} &              0.632 &  0.603 &  0.600 &  0.604 &                  0.721 &   1.820 &   3.319 &     30.624 \\
        & Stratonovich &              0.631 &  0.601 &  0.598 &  0.597 &                  0.721 &   1.818 &   3.319 &     30.612 \\
$\Sd{3}$ & Geodesic &              1.115 &  1.442 &  1.843 &  9.750 &                  1.294 &  10.083 &  42.610 &  37710.684 \\
        & It{\=o} &              1.115 &  1.445 &  1.850 &  9.456 &                  1.296 &  10.130 &  42.543 &  26796.579 \\
        & Stratonovich &              1.115 &  1.444 &  1.850 &  9.532 &                  1.295 &  10.126 &  42.787 &  28288.189 \\
$\Sph^9$ & Geodesic &              0.299 &  1.364 &  1.904 &  3.272 &                  0.349 &   1.637 &   2.315 &      4.125 \\
        & It{\=o} &              0.300 &  1.389 &  1.946 &  3.316 &                  0.352 &   1.656 &   2.343 &      4.060 \\
        & Stratonovich &              0.298 &  1.365 &  1.903 &  3.283 &                  0.350 &   1.638 &   2.315 &      4.123 \\
$\St{5}{3}$ & Geodesic &              0.445 &  0.373 &  0.379 &  0.382 &                  0.518 &   0.884 &   1.352 &      9.745 \\
        & It{\=o} &              0.447 &  0.374 &  0.379 &  0.382 &                  0.518 &   0.883 &   1.346 &      9.754 \\
        & Stratonovich &              0.447 &  0.375 &  0.381 &  0.380 &                  0.517 &   0.885 &   1.343 &      9.741 \\
\bottomrule
\end{tabular}
  }
\caption{Riemannian Brownian simulations by different simulation methods. Simulating $1000$ paths, each with $700$ subdivisions for two cost types and four final times. Displayed by groups of simulation methods.}
\label{tbl:tblBySubdir}
\end{table}

\begin{table}
  \resizebox{\textwidth}{!}{
    \begin{tabular}{llllllllll}
\toprule
        & Cost type & \multicolumn{4}{l}{At final time only} & \multicolumn{4}{l}{With intermediate cost} \\
        & Final time &               0.5  &          2.0  &          3.0  &          10.0 &                   0.5  &           2.0  &           3.0  &                  10.0 \\
manifold & \# div &                    &               &               &               &                        &                &                &                       \\
\midrule
$\Aff(3)$ & 200 &       0.72 (0.002) &  0.74 (0.013) &  0.74 (0.016) &  0.65 (0.026) &           0.78 (0.002) &   1.88 (0.066) &   3.79 (0.603) &       105.74 (55.144) \\
        & 500 &       0.72 (0.002) &  0.74 (0.006) &  0.74 (0.013) &  0.64 (0.028) &           0.78 (0.002) &   1.96 (0.092) &   3.71 (0.209) &       122.37 (19.071) \\
        & 700 &       0.72 (0.002) &   0.74 (0.01) &  0.74 (0.006) &   0.63 (0.03) &           0.78 (0.005) &   1.95 (0.133) &   3.81 (0.366) &       115.07 (22.418) \\
$\GL^+(2)$ & 200 &       0.71 (0.002) &   0.7 (0.003) &   0.7 (0.006) &   0.7 (0.005) &           0.77 (0.001) &   1.78 (0.017) &   3.26 (0.052) &         38.01 (1.309) \\
        & 500 &       0.71 (0.002) &   0.7 (0.002) &   0.7 (0.003) &  0.69 (0.005) &           0.77 (0.001) &   1.78 (0.007) &   3.26 (0.078) &         39.63 (2.974) \\
        & 700 &        0.7 (0.002) &   0.7 (0.002) &   0.7 (0.005) &  0.69 (0.008) &           0.77 (0.001) &    1.78 (0.05) &   3.28 (0.056) &         39.44 (1.198) \\
$\SE(3)$ & 200 &       0.95 (0.002) &  0.81 (0.005) &  0.74 (0.025) &  0.45 (0.014) &            1.0 (0.004) &   1.65 (0.017) &    2.5 (0.059) &          13.0 (0.686) \\
        & 500 &       0.94 (0.002) &   0.8 (0.011) &   0.73 (0.02) &  0.45 (0.009) &            1.0 (0.002) &   1.66 (0.014) &   2.52 (0.042) &           12.86 (0.3) \\
        & 700 &       0.95 (0.002) &   0.8 (0.006) &  0.73 (0.012) &  0.45 (0.022) &           1.01 (0.003) &   1.67 (0.023) &   2.49 (0.031) &         12.84 (0.689) \\
$\SL(3)$ & 200 &       0.24 (0.005) &  0.25 (0.019) &  0.22 (0.012) &  0.08 (0.015) &            0.3 (0.003) &   1.45 (0.049) &   3.26 (0.279) &        97.66 (22.368) \\
        & 500 &       0.24 (0.007) &  0.24 (0.012) &   0.22 (0.01) &  0.08 (0.016) &            0.3 (0.007) &    1.4 (0.051) &   3.28 (0.085) &        92.79 (16.105) \\
        & 700 &       0.24 (0.004) &  0.24 (0.008) &  0.23 (0.011) &  0.08 (0.012) &            0.3 (0.001) &   1.44 (0.032) &   3.24 (0.231) &        92.58 (20.166) \\
$\SO(3)$ & 200 &       0.93 (0.004) &  0.77 (0.026) &  0.68 (0.021) &  0.41 (0.038) &           0.99 (0.004) &   1.58 (0.024) &   2.35 (0.038) &          11.38 (0.94) \\
        & 500 &       0.93 (0.007) &  0.76 (0.017) &  0.68 (0.023) &  0.41 (0.032) &           0.99 (0.004) &   1.58 (0.025) &   2.34 (0.042) &         11.49 (0.345) \\
        & 700 &       0.93 (0.009) &  0.77 (0.022) &  0.68 (0.019) &   0.41 (0.03) &           0.99 (0.006) &   1.58 (0.012) &   2.35 (0.035) &          11.4 (0.556) \\
$\Gr{5}{3}$ & 200 &       0.63 (0.024) &   0.6 (0.024) &   0.6 (0.016) &   0.6 (0.033) &           0.73 (0.011) &   1.82 (0.026) &   3.31 (0.036) &         30.48 (0.354) \\
        & 500 &       0.64 (0.018) &   0.61 (0.02) &   0.6 (0.024) &   0.6 (0.024) &           0.71 (0.014) &   1.81 (0.048) &    3.32 (0.05) &         30.54 (0.266) \\
        & 700 &       0.63 (0.007) &    0.6 (0.02) &   0.6 (0.019) &   0.6 (0.029) &           0.72 (0.016) &    1.82 (0.01) &   3.32 (0.067) &         30.61 (0.248) \\
$\Sd{3}$ & 200 &       1.12 (0.007) &  1.45 (0.031) &  1.79 (0.119) &  9.51 (3.494) &             1.3 (0.03) &   9.71 (0.918) &   39.66 (4.71) &  30000.49 (24816.615) \\
        & 500 &       1.12 (0.004) &  1.45 (0.052) &  1.85 (0.061) &  9.36 (2.524) &           1.29 (0.008) &   9.89 (1.547) &  41.07 (7.291) &  32912.25 (34674.225) \\
        & 700 &       1.11 (0.007) &  1.44 (0.048) &  1.85 (0.054) &   9.6 (2.445) &            1.3 (0.005) &  10.11 (0.874) &  42.67 (6.173) &  31758.87 (40191.444) \\
$\Sph^9$ & 200 &       0.31 (0.018) &    1.4 (0.14) &  1.98 (0.199) &  3.28 (0.219) &           0.35 (0.014) &   1.66 (0.113) &   2.37 (0.148) &          4.07 (0.334) \\
        & 500 &       0.31 (0.021) &  1.36 (0.067) &  1.95 (0.124) &  3.25 (0.226) &           0.35 (0.015) &   1.64 (0.083) &   2.34 (0.081) &          4.08 (0.204) \\
        & 700 &        0.3 (0.007) &  1.37 (0.065) &  1.91 (0.074) &  3.29 (0.093) &           0.35 (0.013) &    1.64 (0.09) &   2.32 (0.047) &          4.11 (0.139) \\
$\St{5}{3}$ & 200 &       0.44 (0.023) &  0.38 (0.017) &   0.38 (0.03) &  0.37 (0.009) &           0.52 (0.028) &    0.89 (0.03) &    1.34 (0.05) &          9.91 (0.479) \\
        & 500 &        0.45 (0.01) &  0.37 (0.013) &  0.37 (0.025) &  0.37 (0.014) &           0.51 (0.017) &   0.88 (0.034) &   1.37 (0.038) &            9.94 (0.3) \\
        & 700 &       0.45 (0.029) &  0.37 (0.028) &  0.38 (0.012) &  0.38 (0.017) &           0.52 (0.026) &    0.88 (0.04) &   1.35 (0.032) &          9.75 (0.156) \\
\bottomrule
\end{tabular}}
  \caption{Mean expected values of Riemannian Brownian simulations using projection and random walk approximations. We simulate 1000 paths, for each cost type (first row) and final time (second row) using a a variety of methods. The time increment $h$ is the final time is divided by \# div intervals in the stochastic integration. The mean is over all simulation methods, the numbers in brackets are the variations in the difference between the highest estimates and lowest estimates, (also over all simulation methods).}
\label{tbl:tblBySimType}  
\end{table}
\subsection{Convergence to uniform distributions on compact Riemannian manifolds}
Consider the volume form $d\cM = \det(\sfg(x))^{\frac{1}{2}}dx^{\dim\cM}$ of a Riemannian manifold $\cM$, with the volume $V(\cM)=\int_{\cM}d \cM$. When this is finite, for example, or a compact manifold, the normalized measure $\frac{1}{V(\cM)}d \cM$ is the uniform measure on $\cM$. It is known \cite{Saloff1994}, for the compact case if $X_t$ follows a Riemannian Brownian motion, the distribution of $X_T$ for large $T$ converges to the uniform distribution. For manifolds defined by orthogonal constraints, specifically, for $\SO(N), \St{n}{p}, \Gr{n}{p}$ and the sphere identified with $\St{n}{1}$, there is an effective sampling method for the uniform distribution if the spaces are equipped with there standard metrics ($\SO(N)$ with the trace metric, $\St{n}{p}$ with the canonical metric $\alpha_0=1,\alpha_1 = \frac{1}{2}$ and $\Gr{n}{p}$ with the quotient of that metric.) Specifically, if we equip the ambient space $\cE=\R^{n\times p}$ ($p=n=N$ for $\SO(N)$) with the normal distribution $N(0, \dI_{\cE})$ and $A\in\cE$ is a matrix of size $n\times p$ then $A(A^{\sfT}A)^{-1}A^{\sfT}\in\cM$ follows the Riemannian uniform distribution. This allows a direct sampling of the uniform distribution on these spaces.

For a compact Lie group $\rG$, a left-invariant metric has the volume form proportional to $\det(\cI)$ times the volume form of the bi-invariant (trace) metric. Thus, when normalized, it becomes the left-invariant Haar measure on $\rG$, and for a compact Lie group, the left and right Haar measures are identical. Thus, the large $T$ limit for {\it any} left-invariant $\SO(N)$ metric gives the same uniform distribution under the bi-invariant metric. For a homogeneous space such as $\St{n}{p}$, the family of metrics for different $(\alpha_0, \alpha_1)$ is a quotient of a left-invariant metric on $\SO(n)$, thus, they also give the same large $T$ limit measure. Further, a change of variable shows the expectations of $f$ and $f\circ (X\mapsto X^{-1})$ are the same in the $\SO(N)$ case.

We verify these observations. In \cref{tab:uniform}, we consider the expectation $\E (f(X_T))$ under the Riemannian Brownian measure at $T=40$, starting at $X_0 = \dI_N$ for $\SO(N)$ and $\dI_{n,p} =\begin{bmatrix}\dI_p\\ 0_{(n-p)\times p} \end{bmatrix}$ otherwise\hfill\break
1. $\SO(3)$ and $SO(4)$ equipped with randomly generated left-invariant metrics, and four costs $f_1(X)=|X_{11}|^2, f_2(X)=\sum_{ij=1}^{N} |X_{ij}|, f_3(X)= e^{\frac{1}{2}\sum |X_{ij}|}$, $f_4(X)=(1+\sum |X_{ij}|)^{-\frac{1}{2}}$,
 together with $f_j\circ (X\mapsto X^{-1})$ for $j=1,\cdots, 4$,\hfill\break
 2. $\St{5}{3}$ with $(\alpha_0, \alpha_1) \in \{(1, \frac{1}{2}), (1, \frac{4}{5}), (1, 1)\}$ and $\St{5}{1}$ (the unit sphere) with $f_1(X)=|X_{11}|^2$, $f_2(X)=\sum \lvert X_{ij}\rvert$, $f_3(X)=e^{\frac{1}{2}\sum\lvert X_{ij}\rvert}$, $f_4(X)=(1+ \sum\lvert X_{ij}\rvert)^{-\frac{1}{2}}$,\hfill\break
3. $\Gr{5}{3}$ with $f_1(X) =(XX^{\sfT})_{11}^2$, $f_2(X)=\sum_{ij}\lvert (XX^{\sfT})_{ij}\rvert$, $f_3(X)=e^{\frac{1}{2}\sum\lvert (XX^{\sfT})_{ij}\rvert}$, $f_4(X)=(1+\sum\lvert (XX^{\sfT})_{ij}\rvert)^{-\frac{1}{2}}$,\hfill\break
and compare with the expectations using direct sampling in \cref{tab:uniform}
\begin{table}
  \resizebox{\textwidth}{!}{  
  \begin{tabular}{llllllllll}
\toprule
                       f & \multicolumn{2}{l}{1} & \multicolumn{2}{l}{2} & \multicolumn{2}{l}{3} & \multicolumn{2}{l}{4} \\
                Manifold &         Sample &                          Brown sim. &         Sample &                          Brown sim. &           Sample &                            Brown sim. &         Sample &                          Brown sim. \\
\midrule
             $\Gr{5}{3}$ &          0.429 &                0.429(0.0146) &          6.389 &                6.406(0.0291) &           25.345 &                 25.378(0.1717) &          0.369 &                0.369(0.0008) \\
$\SO(3)$ &   0.332 &   0.329(0.002) &   4.500 &   4.506(0.010) &     9.618 &     9.603(0.013)  &   0.427 &   0.427(0.001) \\
\; Use $f(x^{-1})$&  0.332 &  0.326(0.013) &  4.500 &  4.497(0.021) &  9.618 &  9.591(0.023) &  0.427 &  0.427(0.001) \\
$\SO(4)$ &   0.249 &   0.253(0.010) &   6.791 &   6.806(0.005) &   30.423 &   30.527(0.217) &   0.359 &   0.358(0.000) \\
\; Use $f(x^{-1})$ &  0.249 &  0.252(0.017) &  6.791 &  6.780(0.002) &  30.423 &  30.259(0.612) &  0.359 &  0.358(0.000)\\
                   $\Sph^5$ &          0.199 &                0.198(0.0053) &          1.875 &                1.872(0.0044) &            2.563 &                  2.570(0.0235) &          0.591 &                0.591(0.0006) \\
  $\St{5}{3}\alpha=(1, \frac{1}{2})$  &          0.200 &                0.203(0.0074) &          5.625 &                5.610(0.0035) &           16.876 &                 16.931(0.1065) &          0.389 &                0.389(0.0001) \\
  $\St{5}{3}\alpha=(1, \frac{4}{5})$ &          0.200 &                0.201(0.0015) &          5.625 &                5.641(0.0136) &           16.876 &                 16.965(0.0579) &          0.389 &                0.388(0.0006) \\
  $\St{5}{3}\alpha=(1, 1)$ &          0.200 &                0.193(0.0029) &          5.625 &                5.623(0.0042) &           16.876 &                 16.871(0.0453) &          0.389 &                0.389(0.0003) \\
\bottomrule
  \end{tabular}}
  \caption{Expected values of $f_i, i=1\cdots 4$ as described, comparing direct sampling of uniform distributions on manifolds versus Riemannian Brownian motion simulation at $T=40$. For Brownian simulations, the numbers in the bracket describe the range (max - min) among the simulation methods used (geodesic, It{\=o} and Stratonovich). For $\SO(3)$ and $\SO(4)$, the second lines show sampled\slash Brownian simulated values of $f_i(x^{-1})$.}
  \label{tab:uniform}  
\end{table}  

The results suggest using the large $T$ limit as a sampling method for the uniform distribution on a compact Riemannian manifold could be effective if $\Pi, \Gamma$, and a retraction are available. %It also suggests depending on the desired accuracy, the method discussed here is also effective at large $T$.
\section{Conclusion}In this paper, we provide a framework to study stochastic differential equations in the embedded framework, with a focus on Riemannian Brownian motion. The methods allow us to simulate Brownian motions on several important manifolds in applied mathematics. We expect the methods to be useful for the research community. In future research, we look forward to applying the methods described here to engineering and scientific problems.
\section{Acknowledgements}Du Nguyen would like to thank Professor Victor Solo for sharing his works on SDE on manifolds. He thanks his family for their loving support and his friend John Tillinghast for helpful comments. The second author is supported by a research grant (VIL40582) from VILLUM FONDEN and the Novo Nordisk Foundation grant NNF18OC0052000.
\bibliographystyle{elsarticle-num}
\bibliography{retraction}

\begin{thebibliography}{10}
\expandafter\ifx\csname url\endcsname\relax
  \def\url#1{\texttt{#1}}\fi
\expandafter\ifx\csname urlprefix\endcsname\relax\def\urlprefix{URL }\fi
\expandafter\ifx\csname href\endcsname\relax
  \def\href#1#2{#2} \def\path#1{#1}\fi

\bibitem{CoifmanLafon}
R.~R. Coifman, S.~Lafon, A.~B. Lee, M.~Maggioni, B.~Nadler, F.~Warner, S.~W. Zucker, Geometric diffusions as a tool for harmonic analysis and structure definition of data: Diffusion maps, Proceedings of the National Academy of Sciences 102~(21) (2005) 7426--7431.
\newblock \href {https://doi.org/10.1073/pnas.0500334102} {\path{doi:10.1073/pnas.0500334102}}.

\bibitem{BELKINNiy}
M.~Belkin, P.~Niyogi, Towards a theoretical foundation for {L}aplacian-based manifold methods, Journal of Computer and System Sciences 74~(8) (2008) 1289--1308, learning Theory 2005.
\newblock \href {https://doi.org/https://doi.org/10.1016/j.jcss.2007.08.006} {\path{doi:https://doi.org/10.1016/j.jcss.2007.08.006}}.

\bibitem{NEURIPS2020_4c5bcfec}
J.~Ho, A.~Jain, P.~Abbeel, Denoising diffusion probabilistic models, in: H.~Larochelle, M.~Ranzato, R.~Hadsell, M.~Balcan, H.~Lin (Eds.), Advances in Neural Information Processing Systems, Vol.~33, Curran Associates, Inc., 2020, pp. 6840--6851.

\bibitem{Carlsson_2010}
T.~Carlsson, T.~Ekholm, C.~Elvingson, Algorithm for generating a {B}rownian motion on a sphere, Journal of Physics A: Mathematical and Theoretical 43~(50) (2010) 505001.
\newblock \href {https://doi.org/10.1088/1751-8113/43/50/505001} {\path{doi:10.1088/1751-8113/43/50/505001}}.

\bibitem{GDL}
M.~M. Bronstein, J.~Bruna, T.~Cohen, P.~Veli\v{c}kovi\'{c}, {Geometric Deep Learning: Grids, Groups, Graphs, Geodesics, and Gauges}, arXiv:2104.13478 (2021).

\bibitem{RiemannScore}
V.~De~Bortoli, E.~Mathieu, M.~Hutchinson, J.~Thornton, Y.~W. Teh, A.~Doucet, Riemannian score-based generative modelling, in: S.~Koyejo, S.~Mohamed, A.~Agarwal, D.~Belgrave, K.~Cho, A.~Oh (Eds.), Advances in Neural Information Processing Systems, Vol.~35, Curran Associates, Inc., 2022, pp. 2406--2422.

\bibitem{MENKOVSKI}
V.~Menkovski, J.~W. Portegies, M.~R. Ravelonanosy, Small time asymptotics of the entropy of the heat kernel on a {R}iemannian manifold, Applied and Computational Harmonic Analysis 71 (2024) 101642.
\newblock \href {https://doi.org/https://doi.org/10.1016/j.acha.2024.101642} {\path{doi:https://doi.org/10.1016/j.acha.2024.101642}}.

\bibitem{SomSvane}
S.~Sommer, A.~M. Svane, Modelling anisotropic covariance using stochastic development and sub-{R}iemannian frame bundle geometry, arXiv:1512.08544 (2015).

\bibitem{Hsu}
E.~P. Hsu, Stochastic Analysis on Manifolds, American Mathematical Soc., 2002.

\bibitem{Meyer}
P.~A. Meyer, G{\'e}om{\'e}trie stochastique sans larmes, {I}, S{\'e}minaire de probabilit{\'e}s (Strasbourg) 15 (1981) 44--102.

\bibitem{Emery2007}
M.~{\'E}mery, An invitation to second-order stochastic differential geometry, https://hal.science/hal-00145073/document (2007).

\bibitem{Emery1989}
M.~{\'E}mery, Stochastic Calculus in Manifolds, Springer, New York Berlin Heidelberg, 1989.

\bibitem{Schwartz}
L.~Schwartz, {Review: Michel {\'E}mery, Stochastic calculus in manifolds}, Bulletin (New Series) of the American Mathematical Society 24~(2) (1991) 451 -- 466.

\bibitem{Edelman_1999}
A.~Edelman, T.~A. Arias, S.~T. Smith, The geometry of algorithms with orthogonality constraints, SIAM J. Matrix Anal. Appl. 20~(2) (1999) 303--353.

\bibitem{SHSW}
S.~Schwarz, M.~Herrmann, A.~Sturm, M.~Wardetzky, Efficient random walks on {R}iemannian manifolds, Found Comput Math (2023).
\newblock \href {https://doi.org/https://doi.org/10.1007/s10208-023-09635-6} {\path{doi:https://doi.org/10.1007/s10208-023-09635-6}}.

\bibitem{AmstrongBrigoIntrinsic}
J.~Armstrong, D.~Brigo, Intrinsic stochastic differential equations as jets, Proceedings of the Royal Society A: Mathematical, Physical and Engineering Sciences 474~(2210) (2018) 20170559.
\newblock \href {https://doi.org/10.1098/rspa.2017.0559} {\path{doi:10.1098/rspa.2017.0559}}.

\bibitem{LaplaceNumerical}
D.~Nguyen, S.~Sommer, Project jax-rb, \url{https://github.com/dnguyend/jax-rb} (2024).

\bibitem{AMS_book}
P.-A. Absil, R.~Mahony, R.~Sepulchre, Optimization Algorithms on Matrix Manifolds, Princeton University Press, Princeton, NJ, USA, 2007.

\bibitem{NguyenOperator}
D.~Nguyen, Operator-valued formulas for {R}iemannian gradient and {H}essian and families of tractable metrics in {R}iemannian optimization, Journal of Optimization Theory and Applications 198 (2023) 135--164.
\newblock \href {https://doi.org/https://doi.org/10.1007/s10957-023-02242-z} {\path{doi:https://doi.org/10.1007/s10957-023-02242-z}}.

\bibitem{eltznerDiffusionMeansGeometric2023}
B.~Eltzner, P.~E.~H. Hansen, S.~F. Huckemann, S.~Sommer, Diffusion means in geometric spaces, Bernoulli 29~(4) (2023) 3141--3170, publisher: Bernoulli Society for Mathematical Statistics and Probability.
\newblock \href {https://doi.org/10.3150/22-BEJ1578} {\path{doi:10.3150/22-BEJ1578}}.

\bibitem{JenJoshSommer}
M.~H. Jensen, S.~Joshi, S.~Sommer, Discrete-time observations of {Brownian} motion on {Lie} groups and homogeneous spaces: Sampling and metric estimation, Algorithms 15 (2022).
\newblock \href {https://doi.org/https://doi.org/10.3390/a15080290} {\path{doi:https://doi.org/10.3390/a15080290}}.

\bibitem{ZhouZhangHongSong}
W.~Zhou, L.~Zhang, J.~Hong, S.~Song, Projection methods for stochastic differential equations with conserved quantities, {BIT} Numer Math 56 (2016) 1497--–1518.
\newblock \href {https://doi.org/https://doi.org/10.1007/s10543-016-0614-0} {\path{doi:https://doi.org/10.1007/s10543-016-0614-0}}.

\bibitem{SoloStiefel}
G.~Marjanovic, M.~J. Piggott, V.~Solo, Numerical methods for stochastic differential equations in the {S}tiefel manifold made simple, in: 2016 IEEE 55th Conference on Decision and Control (CDC), 2016, pp. 2853--2860.
\newblock \href {https://doi.org/10.1109/CDC.2016.7798694} {\path{doi:10.1109/CDC.2016.7798694}}.

\bibitem{SoloWang}
V.~Solo, Z.~Wang, Convergence of a tangent space numerical scheme for a stochastic differential equation on a sphere, in: 2021 60th IEEE Conference on Decision and Control (CDC), 2021, pp. 5814--5819.
\newblock \href {https://doi.org/10.1109/CDC45484.2021.9683003} {\path{doi:10.1109/CDC45484.2021.9683003}}.

\bibitem{AbM}
P.~Absil, J.~Malick, Projection-like retractions on matrix manifolds, SIAM J. Optim 22~(1) (2012) 135--158.
\newblock \href {https://doi.org/10.1137/100802529} {\path{doi:10.1137/100802529}}.

\bibitem{Gangolli}
R.~Gangolli, On the construction of certain diffusions on a differentiable manifold, Z. Wahrscheinlichkeitstheorie verw Gebiete 2 (1964) 406--–419.
\newblock \href {https://doi.org/https://doi.org/10.1007/BF00533608} {\path{doi:https://doi.org/10.1007/BF00533608}}.

\bibitem{Jorgensen}
E.~J\o{}rgensen, The central limit problem for geodesic random walks, Z. Wahrscheinlichkeitstheorie verw Gebiete 32 (1975) 1--–64.

\bibitem{NguyenRayleigh}
D.~Nguyen, Rayleigh quotient iteration, cubic convergence, and second covariant derivative, arXiv:1908.00639v4 (2023).

\bibitem{ChengZhangSra}
X.~Cheng, J.~Zhang, S.~Sra, Theory and algorithms for diffusion processes on {R}iemannian manifolds, arXiv:2204.13665 (2022).

\bibitem{PiggottSolo}
M.~J. Piggott, V.~Solo, {Geometric Euler--Maruyama Schemes for Stochastic Differential Equations in SO(n) and SE(n)}, SIAM Journal on Numerical Analysis 54~(4) (2016) 2490--2516.
\newblock \href {https://doi.org/10.1137/15M1019726} {\path{doi:10.1137/15M1019726}}.

\bibitem{Stroock}
D.~Stroock, On the growth of stochastic integrals, Z. Wahrsch. verw. Gebiete 18 (1971) 340--344.

\bibitem{LaplaceEmb}
P.~Birtea, I.~Caşu, D.~Comănescu, Laplace-{B}eltrami operator on the orthogonal group in ambient ({E}uclidean) coordinates, Applied and Computational Harmonic Analysis 69 (2024) 101619.
\newblock \href {https://doi.org/https://doi.org/10.1016/j.acha.2023.101619} {\path{doi:https://doi.org/10.1016/j.acha.2023.101619}}.

\bibitem{liao2004levy}
M.~Liao, \href{https://books.google.com/books?id=eEzkC9tPQCEC}{L{\'e}vy Processes in Lie Groups}, Cambridge Tracts in Mathematics, Cambridge University Press, 2004.
\newline\urlprefix\url{https://books.google.com/books?id=eEzkC9tPQCEC}

\bibitem{Saloff1994}
L.~Saloff-Coste, \href{http://eudml.org/doc/174711}{Precise estimates on the rate at which certain diffusions tend to equilibrium.}, Mathematische Zeitschrift 217~(4) (1994) 641--677.
\newline\urlprefix\url{http://eudml.org/doc/174711}

\bibitem{LeeSmooth}
J.~Lee, Introduction to Smooth Manifolds, Graduate Texts in Mathematics, Springer, 2003.

\bibitem{ONeil1983}
B.~O'Neill, Semi-{R}iemannian geometry with applications to relativity, Vol. 103 of Pure and Applied Mathematics, Academic Press, Inc, New York, NY, 1983.

\bibitem{Gallier}
J.~Gallier, J.~Quaintance, Differential Geometry and {L}ie Groups, {I}, Vol.~12, Springer, New York, NY, 2020.

\bibitem{NguyenGeoglobal}
D.~Nguyen, Geometry in global coordinates in mechanics and optimal transport, arXiv:2307.10017 (2023).

\bibitem{Bredon2013}
G.~Bredon, \href{https://books.google.com/books?id=wuUlBQAAQBAJ}{Topology and Geometry}, Graduate Texts in Mathematics, Springer New York, 2013.
\newline\urlprefix\url{https://books.google.com/books?id=wuUlBQAAQBAJ}

\bibitem{AdlerShub}
R.~L. Adler, J.~Dedieu, J.~Y. Margulies, M.~Martens, M.~Shub, {Newton's method on Riemannian manifolds and a geometric model for the human spine}, IMA Journal of Numerical Analysis 22~(3) (2002) 359--390.

\bibitem{AMT}
P.~A. Absil, R.~Mahony, J.~Trumpf, An extrinsic look at the {R}iemannian {H}essian, in: F.~Nielsen, F.~Barbaresco (Eds.), Geometric Science of Information, Springer Berlin Heidelberg, 2013, pp. 361--368.

\bibitem{Baudoin}
F.~Baudoin, N.~Demni, J.~Wang, Stochastic areas, horizontal {B}rownian motions, and hypoelliptic heat kernels, arXiv:2212.07483 (2022).

\bibitem{ExtCurveStiefel}
K.~H{\"u}per, I.~Markina, F.~{Silva Leite}, {A Lagrangian approach to extremal curves on Stiefel manifolds}, Journal of Geometric Mechanics 13 (2021) 55--72.
\newblock \href {https://doi.org/10.3934/jgm.2020031} {\path{doi:10.3934/jgm.2020031}}.

\bibitem{HairerWarner}
H.~E., W.~G., Solving Ordinary Differential Equations II. Stiff and Differential-Algebraic Problems. 2nd ed., Springer Series in Comput. Math., Springer-Verlag, Berlin, 1996.

\bibitem{SoellnerFuhrer}
E.~Eich-Soellner, C.~F{\"u}hrer, Numerical Methods in Multibody Dynamics, Teubner, Stuttgart, 1998.

\bibitem{MilsteinTre}
G.~N. Milstein, M.~V. Tretyakov, Stochastic Numerics for Mathematical Physics, Springer, Berlin, 2004.

\bibitem{MilsteinTreSymp}
G.~Milstein, Y.~M. Repin, M.~Tretyakov, Numerical methods for stochastic systems preserving symplectic structure, SIAM J. Numer. Anal. 40 (2002) 1583--–1604.

\bibitem{KUHNEL2019411}
L.~K\"{u}hnel, S.~Sommer, A.~Arnaudon, Differential geometry and stochastic dynamics with deep learning numerics, Applied Mathematics and Computation 356 (2019) 411--437.
\newblock \href {https://doi.org/https://doi.org/10.1016/j.amc.2019.03.044} {\path{doi:https://doi.org/10.1016/j.amc.2019.03.044}}.

\bibitem{chikuse_book}
Y.~Chikuse, Statistics on Special Manifolds, Springer New York, NY, New York, NY, USA, 2003.

\bibitem{jax2018github}
J.~Bradbury, R.~Frostig, P.~Hawkins, M.~J. Johnson, C.~Leary, D.~Maclaurin, G.~Necula, A.~Paszke, J.~Vander{P}las, S.~Wanderman-{M}ilne, Q.~Zhang, \href{http://github.com/google/jax}{{JAX}: composable transformations of {P}ython+{N}um{P}y programs} (2018).
\newline\urlprefix\url{http://github.com/google/jax}

\bibitem{HansES}
P.~Hansen, B.~Eltzner, S.~Sommer, Diffusion means and heat kernel on manifolds, in: Geometric Science of Information: 5th International Conference, GSI 2021, {Springer Berlin Heidelberg}, 2021, pp. 111--118.

\bibitem{mpmath}
{The mpmath development team}, mpmath: a {P}ython library for arbitrary-precision floating-point arithmetic (version 1.3.0), {\tt http://mpmath.org/} (2023).

\end{thebibliography}

\end{document}

\endinput
%%
%% End of file `elsarticle-template-num.tex'.